\theoremstyle{plain}
\newtheorem{thm}{\protect\theoremname}[section]
\theoremstyle{definition}
\newtheorem{defn}[thm]{\protect\definitionname}
\theoremstyle{plain}
\newtheorem{cor}[thm]{\protect\corollaryname}
\theoremstyle{remark}
\newtheorem{rem}[thm]{\protect\remarkname}
\theoremstyle{plain}
\newtheorem{prop}[thm]{\protect\propositionname}
\newenvironment{proof}[1][\protect\proofname]{\par
	\normalfont\topsep6\p@\@plus6\p@\relax
	\trivlist
	\itemindent\parindent
	\item[\hskip\labelsep\scshape #1]\ignorespaces
}{%
	\endtrivlist\@endpefalse
}
\providecommand{\proofname}{Proof}
\theoremstyle{plain}
\newtheorem{lem}[thm]{\protect\lemmaname}
\providecommand{\corollaryname}{Corollary}
\providecommand{\definitionname}{Definition}
\providecommand{\lemmaname}{Lemma}
\providecommand{\propositionname}{Proposition}
\providecommand{\remarkname}{Remark}
\providecommand{\theoremname}{Theorem}
\begin{document}
\title{LOCAL CONDITIONAL REGULARITY FOR THE LANDAU EQUATION WITH COULOMB
POTENTIAL}
\maketitle
\begin{center}
Immanuel Ben Porat\footnote{CMLS, Ecole polytechnique, 91128 Palaiseau cedex, France 

immanuel.ben-porath@polytechnique.edu }
\par\end{center}
\begin{abstract}
This paper studies the regularity of Villani solutions of the space
homogeneous Landau equation with Coulomb interaction in dimension
3. Specifically, we prove that any such solution belonging to the
Lebesgue space $L_{t}^{\infty}L_{v}^{q}$ with $q>3$ in an open cylinder
$(0,S)\times B$, where $B$ is an open ball of $\mathbb{R}^{3}$,
must have H\"older continuous second order derivatives in the velocity
variables, and first order derivative in the time variable locally
in any compact subset of that cylinder. 
\end{abstract}

\section{Introduction }

\begin{onehalfspace}
The objective of this work is to derive conditional regularity of
certain weak solutions $f=f(t,v)\geq0$ a.e. of the space homogeneous
Landau equation with Coulomb potential. At the formal level, this
equation reads (with the Einstein summation convention being used) 

\begin{equation}
\partial_{t}f-\overline{a_{ij}}^{f}\frac{\partial f}{\partial v_{i}\partial v_{j}}=8\pi f^{2},\label{eq:1-1}
\end{equation}

where 
\[
\overline{a_{ij}}^{f}(t,v)=a_{ij}\star f(t,\cdot),
\]
 with 
\[
a_{ij}(z)=\frac{1}{|z|}(\delta_{ij}-\frac{z_{i}z_{j}}{|z|^{2}}).
\]

A fundamental existence theorem due to Villani (theorem 3, (i) in
\cite{20}) provides the existence of a special class of weak solutions
of the associated Cauchy problem for (\ref{eq:1-1}). For an earlier
approach to the existence of weak solutions see \cite{2}. These solutions
are known as ``H- solutions'' or ``Villani solutions'' (see definition
(\ref{Definition 2.1})). In this work we will be concerned only with
this class of solutions. 

We note the similarity between equation (\ref{eq:1-1}) and the semilinear
heat equation 
\begin{equation}
u_{t}-\Delta u=u^{2},\label{eq:-4}
\end{equation}
 for which finite time blow up may occur (see e.g. theorem 1 in \cite{21}).
The term $8\pi f^{2}$ on the r.h.s. of (\ref{eq:1-1}) clearly promotes
finite time blow-up. On the other hand, if $f$ increases at some
point, the diffusion matrix $\overline{a_{ij}}^{f}$ in (\ref{eq:1-1})
increases as well. This is an important difference between the Landau
equation (\ref{eq:1-1}) and the semilinear heat equation \ref{eq:-4}.
Since any increase in the diffusion matrix offsets the effect of the
quadratic source term $8\pi f^{2}$, whether $f$ blows up in finite
time remains an major open problem at the time of this writing. 

One of the most major recent contributions in the study of equation
(\ref{eq:1-1}) was produced in \cite{4}, where it is proved that
for all $T>0$ any H-solution $f$ of (\ref{eq:1-1}) with finite
mass, energy and entropy must satisfy the bound 
\[
\stackrel[0]{T}{\int}\underset{\mathbb{R}^{3}}{\int}\frac{|\nabla_{v}\sqrt{f}|^{2}}{(1+|v|^{2})^{\frac{3}{2}}}dvdt<\infty.
\]
Observe that the bound above offers a striking similarity with Leray's
theory of weak solutions to the Navier-Stokes equation in space dimension
$3$. Indeed, $\sqrt{f}$ belongs to $L^{\infty}((0,+\infty);L^{2}(\mathbf{\mathbb{R}}^{3}))$
while $\nabla_{v}\sqrt{f}$ belongs to $L^{2}((0,+\infty);L_{loc}^{2}(\mathbf{\mathbb{R}}^{3}))$.
Apart from the $(1+|v|^{2})^{-\frac{3}{2}}$ weight in the dissipation
estimate, these bounds on $\sqrt{f}$ are reminiscent of the bounds
satisfied by Leray weak solutions of the Navier-Stokes equation in
space dimension $3$. This loose analogy suggests the problem of proving
conditional regularity results for H-solutions of the Landau equation
(\ref{eq:1-1}). For instance, one could seek conditional, local regularity
results analogous to those obtained in the work of Serrin {[}17{]}
for the Navier-Stokes equation. Namely, Serrin proves that weak solutions
of the Navier Stokes equations lying in an appropiate mixed Lebesgue
space are smooth with respect to the spatial variable and locally
H\"older continuous with respect to time. In view of the analogy
observed above, it is reasonable to expect that an analogous result
holds for equation (\ref{eq:1-1}). 

Silvestre's result in \cite{18} can be thought of as a global Serrin
type theorem for the Landau equation (\ref{eq:1-1}). This result
is complemented in some sense by the propagation estimates obtained
in \cite{1}, which allow to get improved regularity on the solution
by imposing integrability and suitable smallness assumptions on the
initial data. Other conditional regularity results include e.g. \cite{12},\cite{13},
\cite{10} and \cite{11}. In \cite{12} the authors prove regularity
of radially symmetric $L^{p}$ solutions with $p>\frac{3}{2}$ . In
\cite{11}, local H\"older continuity is proved for essentially bounded
weak solutions of equation (\ref{eq:1-1}). In \cite{13} regularity
is proved provided the solutions satisfy a certain variant of Poincare
inequality together with what is referred by the authors as the ``local
doubling property'', which is a reminiscent of the weak Harnack inequality
for supersolutions to parabolic equations. Finally, in \cite{10}
regularity is proved provided the solutions satisfy a variant of the
entropy inequality, which is a reminiscent of the Leray energy inequality,
satisfied by some suitable weak solutions Navier Stokes equations.
Regularity is of course related to uniqueness, and so we refer the
reader to \cite{8} for relevant uniqueness results. 

Our new contribution here is obtaining a local Serrin type theorem
for equation (\ref{eq:1-1}), that is, a localisation of Silvestre's
theorem. This is formulated precisely in theorem \ref{Theorem 1.1 },
which is the main result of the paper. Our approach here differs from
the strategy introduced in \cite{18}, which is based on apriori estimates
for the Landau equation. It is not clear whether it is possible to
establish a local version of these apriori estimates, and we shall
not pursue this direction here. Instead, our approach relies crucially
on analyzing the regularity of the coefficients $\overline{a_{ij}}^{f}$,
and can be roughly summarized as follows: 

1. Obtaining regularity on the coefficients $\overline{a_{ij}}^{f}$(this
is the content of lemma \ref{Lemma 3.6}).

2. Once regularity on the coefficients is achieved, we may apply classical
existence and uniqueness results from the theory of linear parabolic
PDE's in order to show that the solution $f$ enjoys 2 weak derivatives
in space and one weak derivative in time (this is the content of theorem
\ref{Theorem 3.4}).

3. Once the solution is known to be (locally) in some appropriate
space-time Sobolev space, we may improve the regularity of the solution
via a bootstrap argument.
\end{onehalfspace}
\begin{onehalfspace}

\section{Preliminaries and Main Results }
\end{onehalfspace}
\begin{onehalfspace}

\subsection{General Notations}
\end{onehalfspace}

\begin{onehalfspace}
We shall consider here space dimension $3$. Let us fix some notation
and recall some basic definitions. We shall denote by $\omega\subset(0,\infty)\times\mathbb{R}^{3}$
some cylinder, that is, a set of the form $J\times B$ where $J=(0,S)\subset(0,\infty)$
is some finite open interval and $B\subset\mathbb{R}^{3}$ is some
open ball. Of course, upon shifting the spatial variable, it is sufficient
to prove theorem \ref{Theorem 1.1 } for $B$ centered at the origin.
Let $1\leq p\leq\infty,1\leq q\leq\infty,k\in\mathbb{R}$. We recall
the following function spaces. 

$L_{t}^{p}L_{v}^{q}(\omega)\coloneqq L^{p}L^{q}(\omega):$ The space
of all measurable maps $f:\omega\rightarrow\mathbb{R}$ with $||t\mapsto||f(t,\cdot)||_{q}||_{p}<\infty$. 

$L_{k}^{p}(\mathbb{R}^{3}):$ the space of all measurable maps $f:\mathbb{R}^{3}\rightarrow\mathbb{R}$
with $||v\mapsto(1+|v|^{2})^{\frac{k}{2}}f||_{p}<\infty$.

$L\log L(\mathbb{R}^{3})\coloneqq\{f\in L^{1}(\mathbb{R}^{3})|\underset{\mathbb{R}^{3}}{\int}|f(v)||\log|f(v)||dv<\infty\}$.

$W_{q}^{2,1}(\omega):$ the Banach space consisting of all elements
in $L^{q}(\omega)$ with generalized derivatives of the form $\partial_{t}^{r}\partial_{x}^{s}$
where $2r+s\leq2$ and such that $\partial_{t}^{r}\partial_{x}^{s}\in L^{q}(\omega)$.
The norm on $W_{q}^{2,1}(\omega)$ is defined by $||u||_{q}^{(2)}=\stackrel[j=0]{2}{\sum}\underset{2r+s=j}{\sum}||\partial_{t}^{r}\partial_{x}^{s}u||_{q}$. 

$W_{2}^{1,0}(\omega):$ the Hilbert space consisting of all elements
in $L^{2}(\omega)$ with generalized derivatives of the form $\partial_{x}$
such that $\partial_{x}\in L^{2}(\omega)$. The scalar product on
$W_{2}^{1,0}(\omega)$ is defined by $(u,v)=\underset{\omega}{\int}uv+\partial_{x_{k}}u\partial_{x_{k}}vdxdt$. 

Let $0<l<1$. We will use the following H\"older spaces. 

$H_{l}^{\ast}(\overline{\omega})$: for each $P,Q\in\omega$ we introduce
the metric $|P-Q|=\max\{|x^{P}-x^{Q}|,8|t^{P}-t^{Q}|^{\frac{1}{2}}\}$.
Write $P\backsim Q$ to mean $P=Q+\eta e_{k}$ for some $\eta\in\mathbb{R},1\leq k\leq3$
(as customary $e_{k}$ stands for the unit vector in direction $k$
in $\mathbb{R}^{3}$). We consider the distances $d_{P}=\inf\{|P-Q|\}_{Q\in T(P)}$
where $T(P)$ is the set of points on the boundary of $\omega$ for
which there exist a continuous arc connecting $P,Q$ along which the
$t$ coordinate is nondecreasing from $Q$ to $P.$ This gives rise
to a distance $d_{PQ}$ defined by $d_{PQ}=\min\{d_{P},d_{Q}\}$.
For $m\in\mathbb{N}$ we then consider 

\[
||d^{m}v||_{l}^{\ast}=\underset{P\in\omega}{\sup}d_{P}^{m}|v(P)|+\underset{P,Q\in\omega,P\thicksim Q}{\sup}d_{PQ}^{m+l}\frac{|v(P)-v(Q)|}{|P-Q|^{l}}.
\]

$(H_{l}^{\ast}(\overline{\omega}),||d\cdot||_{l}^{\ast})$ is the
Banach space whose elements are all $v$ on $\omega$ admiting a (unique)
$C^{0}$ extension to $\overline{\omega}$ and such that $||dv||_{l}^{\ast}<\infty$
.

$H_{l+2}^{\ast}(\overline{\omega}):$ we consider the norm 

\[
||v||_{l+2}^{\ast}=||v||_{l}+\stackrel[i=1]{3}{\sum}||d\partial_{x_{i}}v||_{l}+\underset{1\leq i,j\leq3}{\sum}||d^{2}\partial_{x_{i}x_{j}}v||_{l}+||d^{2}\partial_{t}v||_{l}^{\ast},
\]

where 

\[
||d^{m}v||_{l}=\underset{P\in\omega}{\sup}d_{P}^{m}|v(P)|+\underset{P,Q\in\omega}{\sup}d_{PQ}^{m+l}\frac{|v(P)-v(Q)|}{|P-Q|^{l}},m\in\mathbb{N}.
\]

$(H_{l+2}^{\ast}(\overline{\omega}),||\cdot||_{l+2}^{\ast})$ is the
Banach space whose elements are all $v$ on $\omega$ admiting a (unique)
$C^{2}$ extension to $\overline{\omega}$ and such that $||v||_{l+2}^{\ast}<\infty$. 

$H^{l,\frac{l}{2}}(\overline{\omega}):$ we consider the norm 

\[
||v||^{l,\frac{l}{2}}=\underset{P\in\omega}{\sup}|v(P)|+\underset{(t,x),(t,y)\in\omega}{\sup}\frac{|v(t,x)-v(t,y)|}{|x-y|^{l}}+\underset{(t,x),(s,x)\in\omega}{\sup}\frac{|v(t,x)-v(s,x)|}{|t-s|^{\frac{l}{2}}}.
\]

$(H^{l,\frac{l}{2}}(\overline{\omega}),||\cdot||^{l,\frac{l}{2}})$
is the Banach space whose elements are all $v$ on $\omega$ admiting
a (unique) $C^{0}$ extension to $\overline{\omega}$ and such that
$||v||^{l,\frac{l}{2}}<\infty$ .

$H^{1+l,\frac{1+l}{2}}(\overline{\omega}):$ we consider the norm 

\[
||v||^{1+l,\frac{1+l}{2}}=\underset{P\in\omega}{\sup}|v(P)|+\underset{(t,x),(t,y)\in\omega}{\sup}\frac{|v(t,x)-v(t,y)|}{|x-y|^{l}}+\underset{(t,x),(s,x)\in\omega}{\sup}\frac{|v(t,x)-v(s,x)|}{|t-s|^{\frac{l}{2}}}+
\]

\[
\stackrel[i=1]{3}{\sum}\underset{P\in\omega}{\sup}|\partial_{x_{i}}v(P)|+\stackrel[i=1]{3}{\sum}\underset{(t,x),(t,y)\in\omega}{\sup}\frac{|\partial_{x_{i}}v(t,x)-\partial_{x_{i}}v(t,y)|}{|x-y|^{l}}+\stackrel[i=1]{3}{\sum}\underset{(t,x),(s,x)\in\omega}{\sup}\frac{|\partial_{x_{i}}v(t,x)-\partial_{x_{i}}v(s,x)|}{|t-s|^{\frac{l}{2}}}.
\]

We remark that the notations used for the above H\"older type spaces
has nothing to do with Sobolev spaces (which are frequently denoted
by $H^{k}$). 
\end{onehalfspace}
\begin{onehalfspace}

\subsection{The Landau Equation and H-Solutions}
\end{onehalfspace}

\begin{onehalfspace}
Following \cite{20}, let us briefly recall the Landau equation and
the notion of H-solutions (also know as Villani solutions). We refer
the reader to \cite{20} for a elaborative and motivational disscusion. 

Define $\Pi:\mathbb{R}^{3}\setminus\{0\}\rightarrow M_{3}(\mathbb{R})$
by $\Pi(z)=I-(\frac{z}{|z|})^{\otimes2}$, so that $\Pi_{ij}(z)=\delta_{ij}-\frac{z_{i}z_{j}}{|z|^{2}}$.
Define $a_{ij}(z)=\frac{1}{|z|}\Pi_{ij}(z)$ and $b_{i}(z)=\stackrel[j=1]{3}{\sum}\partial_{j}a_{ij}(z)$.
For a function $f$ on $\mathbb{R}^{3}$ consider the convolutions
$\overline{a}_{ij}^{f}\coloneqq\underset{\mathbb{R}^{3}}{\int}a_{ij}(v-z)f(z)dz,\overline{b}_{i}^{f}\coloneqq\underset{\mathbb{R}^{3}}{\int}b_{i}(v-z)f(z)dz$.
As is customary, for a function $f$ on $\mathbb{R}^{3}$ we shall
write $M(f)=\underset{\mathbb{R}^{3}}{\int}f(v)dv,E(f)=\underset{\mathbb{R}^{3}}{\int}f(v)\frac{|v|^{2}}{2}dv,H(f)=\underset{\mathbb{R}^{3}}{\int}f(v)\log(f(v))dv$
whenever the quantity on the RHS is well defined. The quantities $M(f),E(f),H(f)$
are called the mass, energy and entropy of $f$ respectively. 
\end{onehalfspace}
\begin{defn}
\begin{onehalfspace}
\begin{flushleft}
\label{Definition 2.1} Let $f_{0}(v)=f_{0}$ have finite mass, energy
and entropy. A H-solution to equation (\ref{eq:1-1}) with initial
data $f_{0}$ on $[0,T]\times\mathbb{R}^{3}$ is an element $f\in C([0,T],\mathcal{D}'(\mathbb{R}^{3}))\cap L^{1}((0,T),L_{-1}^{1}(\mathbb{R}^{3}))$
such that
\par\end{flushleft}
\begin{flushleft}
1. $f\geq0$ and $\forall t\in[0,T]:f(t,\cdot)\in L_{2}^{1}(\mathbb{R}^{3})\cap L\log L(\mathbb{R}^{3})$ 
\par\end{flushleft}
\begin{flushleft}
2. $f(0,\cdot)=f_{0}(\cdot)$
\par\end{flushleft}
\begin{flushleft}
3. $\forall t\in[0,T]$: $\underset{\mathbb{R}^{3}}{\int}f(t,v)\log(f(t,v))dv\leq\underset{\mathbb{R}^{3}}{\int}f_{0}(v)\log(f_{0}(v))dv$
and $\underset{\mathbb{R}^{3}}{\int}f(t,v)\psi(v)dv=\underset{\mathbb{R}^{3}}{\int}f_{0}(v)\psi(v)dv$
where $\psi=1,v_{i},|v|^{2}$
\par\end{flushleft}
\begin{flushleft}
4. $\forall\varphi\in C^{1}([0,T],C_{0}^{\infty}(\mathbb{R}^{3})),\forall t\in[0,T]$
\[
\underset{\mathbb{R}^{3}}{\int}f(t,v)\varphi(t,v)dv-\underset{\mathbb{R}^{3}}{\int}f_{0}(v)\varphi(0,v)dv-\stackrel[0]{t}{\int}\underset{\mathbb{R}^{3}}{\int}f(\tau,v)\partial_{t}\varphi(\tau,v)dvd\tau
\]
\par\end{flushleft}
\begin{flushleft}
\[
=-\stackrel[0]{t}{\int}\underset{\mathbb{R}^{3}}{\int}\underset{\mathbb{R}^{3}}{\int}\Pi(\nabla_{v}\sqrt{\frac{f(\tau,v)f(\tau,w)}{|v-w|}}-\nabla_{w}\sqrt{\frac{f(\tau,v)f(\tau,w)}{|v-w|}})\sqrt{\frac{f(\tau,v)f(\tau,w)}{|v-w|}})(\nabla_{v}\varphi(t,v)-\nabla_{w}\varphi(t,w))dvdwd\tau.
\]
\par\end{flushleft}
\end{onehalfspace}

\end{defn}
\begin{onehalfspace}
The integral on the RHS in 4. is well defined, as explained in detail
in \cite{20}. Apriori, it is not clear what is the relation between
the notion of a H-solution and a weak solution in the classical sense.
As we will recall in the next section, it can be shown that in fact
H-solutions are weak solutions in the classical sense, but this is
a consequence of a highly nontrivial theorem of Desvillettes \cite{4}.
In what follows, we shall refer to H-solutions of equation (\ref{eq:1-1})
simply as H-solutions. 
\end{onehalfspace}
\begin{onehalfspace}

\subsection{Known and New Results }
\end{onehalfspace}

\begin{onehalfspace}
First and most foremost we recall that Villani proved the global existence
of H-solutions for all initial data with finite mass, energy and entropy.
In addition he proved that these solutions are weakly H\"older continuous
in time, as described in the following 
\end{onehalfspace}
\begin{thm}
\begin{onehalfspace}
\begin{flushleft}
\label{Theorem 2.2} \textup{(\cite{20}, Theorem 3, (i))} Let $f_{0}:\mathbb{R}^{3}\rightarrow\mathbb{R}$
have finite mass, energy and entropy. Then there exist a H-solution
$f$ with initial datum $f_{0}$. Moreover, if $f$ is a H-solution
with initial datum $f_{0}$, then, for all $\varphi\in W^{2,\infty}(\mathbb{R}^{3})$,
$t\mapsto\underset{\mathbb{R}^{3}}{\int}f(t,v)\varphi(v)dv$ is H\"older
continuous with exponent $\frac{1}{2}$. 
\par\end{flushleft}
\end{onehalfspace}

\end{thm}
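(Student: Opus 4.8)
The plan is to obtain H-solutions by a vanishing-regularization scheme and then read off the claimed time regularity from the entropy-dissipation estimate. For existence, I would first regularize equation (\ref{eq:1-1}): mollify and truncate the initial datum to get $f_{0}^{\varepsilon}\in C_{c}^{\infty}(\mathbb{R}^{3})$, $f_{0}^{\varepsilon}\ge 0$, with $M(f_{0}^{\varepsilon})\to M(f_{0})$, $E(f_{0}^{\varepsilon})\to E(f_{0})$, $H(f_{0}^{\varepsilon})\to H(f_{0})$; and replace the Coulomb kernel $a_{ij}(z)=|z|^{-1}\Pi_{ij}(z)$ by a mollified, truncated family $a_{ij}^{\varepsilon}$ (adding, if convenient, a small artificial diffusion $\varepsilon\Delta_{v}$ and a velocity confinement). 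Standard parabolic theory then provides, for each $\varepsilon>0$, a smooth, nonnegative, rapidly decaying solution $f^{\varepsilon}$; the quadratic source $8\pi f^{2}$ causes no trouble at this stage since the regularization keeps it locally bounded.

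The engine of the argument is a family of $\varepsilon$-uniform a priori bounds. Testing the regularized equation against $1,v_{i},|v|^{2}$ yields (up to $o_{\varepsilon}(1)$ errors) conservation of mass and momentum and control of the energy; testing against $\log f^{\varepsilon}$ yields the H-theorem,
\[
\frac{d}{dt}H(f^{\varepsilon}(t))=-D_{\varepsilon}(f^{\varepsilon}(t))\le 0,\qquad\text{hence}\qquad \int_{0}^{T} D_{\varepsilon}(f^{\varepsilon}(t))\,dt\le H(f_{0}^{\varepsilon})-\inf H,
\]
where $D_{\varepsilon}\ge 0$ is the Landau entropy-dissipation functional and the infimum is over densities with the mass and energy of $f_{0}$ (finite, by comparison of the entropy with a Gaussian). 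These bounds are $\varepsilon$-uniform, give $L^{\infty}_{t}(L^{1}_{2}\cap L\log L)$ control, and --- via the Dunford--Pettis theorem --- upgrade weak-$L^{1}$ compactness of $(f^{\varepsilon})$ to strong $L^{1}_{loc}$ compactness; moreover $\sqrt{f^{\varepsilon}}$ is bounded in $L^{2}$ and $\nabla_{v}\sqrt{f^{\varepsilon}}$ is, locally, bounded in $L^{2}_{t,v}$ by $D_{\varepsilon}$.

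The main obstacle is the passage $\varepsilon\to 0$ in the collision term, which is quadratic in $f$ and only weakly controlled. The device --- and the reason the class of H-solutions is defined as in Definition \ref{Definition 2.1} --- is to write the weak form of the collision operator in the symmetric square-root form appearing on the right-hand side of item 4 there, namely
\[
-\int_{0}^{T}\!\!\iint \Pi(v-w)\,\mathcal{G}^{\varepsilon}(\tau,v,w)\cdot\bigl(\nabla_{v}\varphi-\nabla_{w}\varphi\bigr)\,dv\,dw\,d\tau,\qquad \mathcal{G}^{\varepsilon}=\sqrt{\tfrac{f^{\varepsilon}(v)f^{\varepsilon}(w)}{|v-w|}}\,(\nabla_{v}-\nabla_{w})\sqrt{\tfrac{f^{\varepsilon}(v)f^{\varepsilon}(w)}{|v-w|}},
\]
a quantity whose appropriately weighted $L^{2}\bigl((0,T)\times\mathbb{R}^{3}\times\mathbb{R}^{3}\bigr)$ norm is dominated by $\int_{0}^{T} D_{\varepsilon}\,dt$. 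One then extracts a subsequence along which $\mathcal{G}^{\varepsilon}$ converges weakly, $\sqrt{f^{\varepsilon}}\to\sqrt{f}$ strongly in $L^{2}_{loc}$ and $f^{\varepsilon}\to f$ in $L^{1}_{loc}$, identifies the weak limit with the expression built from $f$, and concludes that the limit $f$ satisfies item 4; properties 1--3 follow from the uniform bounds together with Fatou and lower semicontinuity of the entropy. This identification of the limit of the quadratic term is the genuinely delicate point.

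Finally, the H\"older-$\tfrac12$ continuity in time. Fix $\varphi\in W^{2,\infty}(\mathbb{R}^{3})$; by a cutoff argument exploiting the uniform-in-$t$ mass and energy bounds it suffices to treat $\varphi\in C_{c}^{\infty}(\mathbb{R}^{3})$. Subtracting item 4 at times $s<t$ against the time-independent test function $\varphi$ gives $\int f(t)\varphi\,dv-\int f(s)\varphi\,dv$ on the left and the integral over $(s,t)$ of the collision term on the right. Using $|\nabla_{v}\varphi(v)-\nabla_{w}\varphi(w)|\le\|D^{2}\varphi\|_{\infty}|v-w|$, the identity $\Pi(v-w)(v-w)=0$ (which removes the most singular piece of $\mathcal{G}$), and Cauchy--Schwarz in $(v,w)$ against the entropy-dissipation density, one obtains
\[
\Bigl|\int f(t,v)\varphi(v)\,dv-\int f(s,v)\varphi(v)\,dv\Bigr|\ \le\ C\,\|D^{2}\varphi\|_{\infty}\int_{s}^{t} D(f(\tau))^{1/2}\,d\tau,
\]
with $C$ depending only on $M(f_{0}),E(f_{0})$ through $\iint f(\tau,v)f(\tau,w)|v-w|\,dv\,dw\le C(M,E)$. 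A last Cauchy--Schwarz in $\tau$ together with $\int_{0}^{T} D(f(\tau))\,d\tau<\infty$ gives $\bigl|\int f(t)\varphi-\int f(s)\varphi\bigr|\le C'\|D^{2}\varphi\|_{\infty}|t-s|^{1/2}$, as claimed. I expect essentially all the difficulty to be concentrated in the $\varepsilon\to 0$ limit of the collision term; the a priori estimates and the time-continuity computation are routine once the entropy-dissipation bound is in hand.
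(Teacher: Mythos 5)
This statement is quoted from Villani (Theorem 3, (i) of \cite{key-1}); the paper gives no proof of it, so there is nothing internal to compare against --- your proposal has to stand on its own against Villani's original argument. The final part of your sketch, the H\"older-$\tfrac12$ continuity of $t\mapsto\int f(t,v)\varphi(v)\,dv$, is correct and is exactly the standard argument: subtract the weak formulation at times $s<t$, use $|\nabla\varphi(v)-\nabla\varphi(w)|\le\|D^{2}\varphi\|_{\infty}|v-w|$ so that the factor $|v-w|$ cancels the $|v-w|^{-1}$ in $F=f(v)f(w)/|v-w|$, bound $\iint f(\tau,v)f(\tau,w)|v-w|\,dv\,dw$ by the mass and energy, and apply Cauchy--Schwarz against the entropy-dissipation density, whose time integral is finite by the very definition of an H-solution. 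That half is fine.

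The existence half, however, has a genuine gap. You correctly identify the passage to the limit in the collision term as the crux, but you do not carry it out, and the one concrete mechanism you offer for it is wrong: the Dunford--Pettis theorem gives only \emph{weak} $L^{1}$ sequential compactness from uniform integrability; it does not ``upgrade'' to strong $L^{1}_{loc}$ compactness. Strong compactness in $v$ is precisely what is not freely available here --- a bound on $\nabla_{v}\sqrt{f^{\varepsilon}}$ for the Coulomb potential is Desvillettes' theorem (Theorem \ref{Theorem 2.3 } in this paper, proved in 2015, long after Villani's existence result), not a routine consequence of the dissipation bound, so you cannot invoke it as an ingredient of the construction without proof. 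Villani's actual identification of the limit works with weak convergence only, exploiting that $f^{\varepsilon}\otimes f^{\varepsilon}$ converges weakly to $f\otimes f$ (tensor structure), rewriting $\sqrt{F^{\varepsilon}}\,\Pi(\nabla_{v}-\nabla_{w})\sqrt{F^{\varepsilon}}=\tfrac12\Pi(\nabla_{v}-\nabla_{w})F^{\varepsilon}$ in the sense of distributions, and pairing the weak $L^{2}$ limit of the dissipation density with the distributional limit of $F^{\varepsilon}$; none of this appears in your outline. As written, the existence part is a plausible plan whose decisive step is both missing and, where sketched, incorrectly justified.
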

\begin{onehalfspace}
The next most important result for our purposes is the following weighted
$L^{2}$ estimate on the distributional derivative of $\sqrt{f}$ 
\end{onehalfspace}
\begin{thm}
\begin{onehalfspace}
\begin{flushleft}
\label{Theorem 2.3 } \textup{(\cite{4}, Theorem 1)} Let $f$ be
a H-solution on $[0,T]\times\mathbb{R}^{3}$. Then $\stackrel[0]{T}{\int}\underset{\mathbb{R}^{3}}{\int}\frac{|\nabla_{v}\sqrt{f(t,v)}|^{2}}{(1+|v|^{2})^{\frac{3}{2}}}dvdt<\infty$.
\par\end{flushleft}
\end{onehalfspace}

\end{thm}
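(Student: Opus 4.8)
\emph{Proof strategy.} The plan is to extract the weighted gradient estimate from the (finite) entropy dissipation of the H-solution.

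\textbf{Step 1: reduction to a pointwise inequality.} By Villani's construction of H-solutions (Theorem~\ref{Theorem 2.2} and its proof in \cite{key-1}; equivalently, this is implicit in the well-posedness of the weak formulation of Definition~\ref{Definition 2.1}, whose right-hand side in item~4 is essentially the square root of the entropy production), every H-solution obeys the entropy dissipation estimate
\[
H(f(t,\cdot))+\int_0^t D(f(s,\cdot))\,ds\le H(f_0)\qquad\text{for a.e. }t\in[0,T],
\]
where $D(f):=-\tfrac{d}{dt}H(f)$ is the Landau entropy production functional. Since finite mass and energy force $H(f(s,\cdot))\ge-C(M(f_0),E(f_0))$, this yields $\int_0^T D(f(s,\cdot))\,ds<\infty$. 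It therefore suffices to establish a pointwise-in-time functional inequality
\[
\int_{\mathbb{R}^3}\frac{|\nabla_v\sqrt{f(t,v)}|^2}{(1+|v|^2)^{3/2}}\,dv\le C_{f_0}\bigl(D(f(t,\cdot))+1\bigr),
\]
with $C_{f_0}$ depending only on $M(f_0),E(f_0),H(f_0)$, and then integrate in $t$.

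\textbf{Step 2: dissipation identity and coercivity of the coefficients.} Testing (\ref{eq:1-1}) against $\log f$ and integrating by parts, using the distributional identity $\sum_{i,j}\partial_i\partial_j a_{ij}=-8\pi\,\delta_0$ on $\mathbb{R}^3$ (with $\delta_0$ the Dirac mass at the origin, the very identity producing the source $8\pi f^2$ in (\ref{eq:1-1})), one obtains
\[
D(f)=4\int_{\mathbb{R}^3}\overline{a_{ij}}^{f}(v)\,\partial_i\sqrt{f(v)}\,\partial_j\sqrt{f(v)}\,dv-8\pi\int_{\mathbb{R}^3}f(v)^2\,dv .
\]
In parallel one proves a quantitative coercivity bound for the diffusion matrix: there is $\kappa=\kappa(M(f_0),E(f_0),H(f_0))>0$ with $\overline{a_{ij}}^{f}(v)\xi_i\xi_j\ge\kappa(1+|v|^2)^{-3/2}|\xi|^2$ for all $v,\xi\in\mathbb{R}^3$. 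For large $|v|$, the component of $\xi$ orthogonal to $v$ is controlled by the mass ($\overline{a_{ij}}^{f}(v)\xi_i\xi_j\gtrsim M|v|^{-1}|\xi|^2$ there) and the component parallel to $v$ by the energy ($\gtrsim E|v|^{-3}|\xi|^2$, the leading term being annihilated by the projection $\Pi(v-w)$ so that only the second moment of $f$ survives); for bounded $|v|$ the finiteness of the entropy, together with the moment bounds, prevents $f$ from concentrating near a line through $v$ and yields a uniform positive lower bound. Combining the identity with the coercivity, $\kappa\int(1+|v|^2)^{-3/2}|\nabla_v\sqrt f|^2\,dv\le\tfrac14 D(f)+2\pi\int f^2\,dv$.

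\textbf{Step 3: the main obstacle.} The term $\int f^2$ is \emph{not} absorbable into the left-hand side --- at fixed mass, a bump of width $\delta\to0$ has $\int f^2\sim\delta^{-3}$ while the weighted Dirichlet integral is only $\sim\delta^{-2}$ --- and $\int_0^T\!\int f^2\,dv\,dt$ is not known to be finite for H-solutions; so the estimate cannot be closed through the pointwise coercivity. The remedy is to work instead with the manifestly nonnegative bilinear form
\[
D(f)=\tfrac12\int_{\mathbb{R}^3}\!\int_{\mathbb{R}^3}a_{ij}(v-w)\Bigl(\tfrac{\partial_i f(v)}{f(v)}-\tfrac{\partial_i f(w)}{f(w)}\Bigr)\Bigl(\tfrac{\partial_j f(v)}{f(v)}-\tfrac{\partial_j f(w)}{f(w)}\Bigr)f(v)f(w)\,dv\,dw
\]
and to split the $w$-integral according to whether $|v-w|$ is large or small. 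On the far region the kernel $a_{ij}(v-w)$ is bounded, expanding the square produces no $f^2$-contribution (the Dirac mass above sits at $v=w$), and the moments of $f$ deliver the good term with the correct weight $(1+|v|^2)^{-3/2}$. The near-diagonal region carries the entire obstruction: there one exploits that a finite $L\log L$ norm forbids $f$ from concentrating at scale $|v-w|$, so that the singular factor $1/|v-w|$ may be traded --- up to an error depending only on $M(f_0),E(f_0),H(f_0)$ --- for the quantity one wants. Carrying out this (delicate) bookkeeping produces precisely $D(f)\ge c_{f_0}\int(1+|v|^2)^{-3/2}|\nabla_v\sqrt f|^2\,dv-C_{f_0}$, the inequality required in Step 1.

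\textbf{Step 4: conclusion.} Integrating in time,
\[
\int_0^T\!\int_{\mathbb{R}^3}\frac{|\nabla_v\sqrt{f}|^2}{(1+|v|^2)^{3/2}}\,dv\,dt\le\frac{1}{c_{f_0}}\int_0^T\bigl(D(f(t,\cdot))+C_{f_0}\bigr)\,dt\le\frac{1}{c_{f_0}}\bigl(H(f_0)+C(M(f_0),E(f_0))+C_{f_0}T\bigr)<\infty,
\]
which is the assertion. I expect the near-diagonal analysis of the dissipation in Step 3 --- equivalently, the control of the $\int f^2$ term --- to be the genuine difficulty; the coercivity of the coefficients $\overline{a_{ij}}^{f}$ and the time integration are comparatively routine.
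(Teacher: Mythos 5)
This statement is not proved in the paper at all: it is quoted verbatim as Theorem~1 of Desvillettes \cite{key-4}, so there is no internal proof to compare against, and any proof attempt must be measured against Desvillettes' argument. Your overall architecture does match his: the time integral of the entropy dissipation $D(f)$ is finite for H-solutions (indeed this is built into Definition~\ref{Definition 2.1}, since item~4 only makes sense when the doubled-variable dissipation integrand is square integrable), and the whole theorem reduces to the time-independent functional inequality $\int(1+|v|^2)^{-3/2}|\nabla_v\sqrt f|^2\,dv\le C_{M,E,H}(1+D(f))$. Your Step~2 identity and your diagnosis that the $\int f^2$ term blocks the naive coercivity route are also correct.

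The genuine gap is Step~3, which is the entire content of the theorem and which you assert rather than prove. The claim that on the near-diagonal region ``the singular factor $1/|v-w|$ may be traded, up to an error depending only on $M,E,H$, for the quantity one wants'' is precisely the inequality to be established, not a step toward it; moreover, after expanding the square in the doubled form, the cross term $\nabla\sqrt{f(v)}\cdot\nabla\sqrt{f(w)}$ carries no sign on the near-diagonal region, so the far/near splitting as described does not close. Desvillettes' actual mechanism is different and avoids $\int f^2$ altogether: for fixed $v$ he pairs the $L^2(dw)$ vector field whose squared norm is the dissipation density against the finitely many test functions $\psi(w)\in\{1,w_1,w_2,w_3,|w|^2\}$, applies Cauchy--Schwarz in $w$, and integrates by parts the $\nabla_w$ contribution so that only moments of $f$ (never $\int f^2$) appear. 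This produces a linear system for $\nabla\sqrt{f(v)}$ whose coefficient matrix is of $\overline{a}^f$ type; its invertibility, quantified by the same mass/energy/entropy non-concentration lemma you invoke for coercivity, yields the weight $(1+|v|^2)^{-3/2}$. Without this (or an equivalent) argument, your proposal establishes the reduction but not the theorem.
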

\begin{onehalfspace}
We remark that theorem \ref{Theorem 2.3 } implies in particular that
any $L^{2}(\omega)$ H-solution is in $W_{2}^{1,0}(\omega)$, as can
be seen from the identity $\partial_{v_{j}}f=2\sqrt{f}\partial_{v_{j}}\sqrt{f}$.
A list of important conclusions is derived from theorem \ref{Theorem 2.3 }
in \cite{4}. The first one, is that H-solutions are in fact ``usual''
weak solutions in the sense of integration against test functions
\end{onehalfspace}
\begin{cor}
\begin{onehalfspace}
\begin{flushleft}
\label{Corollary 2.3} \textup{(\cite{4}, Corollary 1.1)} Let $f$
be a H-solution with initial datum $f_{0}$. Then $f\in L^{1}((0,T),L_{-3}^{3}(\mathbb{R}^{3}))$
and for all $\varphi\in C_{0}^{2}([0,T)\times\mathbb{R}^{3})$ it
holds that 
\par\end{flushleft}
\begin{flushleft}
\[
-\underset{\mathbb{R}^{3}}{\int}f_{0}(v)\varphi(0,v)dv-\stackrel[0]{T}{\int}\underset{\mathbb{R}^{3}}{\int}f(t,v)\partial_{t}\varphi(t,v)dvdt=
\]
\par\end{flushleft}
\begin{flushleft}
\[
\frac{1}{2}\stackrel[i=1]{3}{\sum}\stackrel[j=1]{3}{\sum}\stackrel[0]{T}{\int}\underset{\mathbb{R}^{3}}{\int}\underset{\mathbb{R}^{3}}{\int}f(t,v)f(t,w)a_{ij}(v-w)(\partial_{ij}\varphi(t,v)+\partial_{ij}\varphi(t,w))dvdwdt
\]
\[
+\stackrel[i=1]{3}{\sum}\stackrel[0]{T}{\int}\underset{\mathbb{R}^{3}}{\int}\underset{\mathbb{R}^{3}}{\int}f(t,v)f(t,w)b_{i}(v-w)(\partial_{i}\varphi(t,v)-\partial_{i}\varphi(t,w))dvdwdt.
\]
\par\end{flushleft}
\end{onehalfspace}

\end{cor}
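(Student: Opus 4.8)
The plan is to read the identity directly off the ``square root'' weak formulation recorded in item 4 of Definition \ref{Definition 2.1} (the form in which an H-solution is known \emph{a priori} to make sense) and convert it into the classical weak form by a single algebraic simplification followed by an integration by parts. Before that, I would record the integrability claim $f\in L^{1}((0,T),L_{-3}^{3}(\mathbb{R}^{3}))$. Set $h(t,v):=\sqrt{f(t,v)}\,(1+|v|^{2})^{-3/4}$. Then $\nabla_{v}h=(1+|v|^{2})^{-3/4}\nabla_{v}\sqrt{f}-\tfrac{3}{2}\,v\,(1+|v|^{2})^{-7/4}\sqrt{f}$; the first summand lies in $L^{2}((0,T)\times\mathbb{R}^{3})$ by Theorem \ref{Theorem 2.3 }, while the second is pointwise dominated by $\tfrac{3}{2}\sqrt{f}$ and hence bounded in $L^{2}$ by the conserved mass. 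Since also $\|h(t,\cdot)\|_{L^{2}}^{2}=\int(1+|v|^{2})^{-3/2}f(t,v)\,dv\le M(f_{0})$, we get $h\in L^{2}((0,T);H^{1}(\mathbb{R}^{3}))$, hence $h\in L^{2}((0,T);L^{6}(\mathbb{R}^{3}))$ by Sobolev embedding; and $\|h(t,\cdot)\|_{L^{6}}^{2}=\|(1+|v|^{2})^{-3/2}f(t,\cdot)\|_{L^{3}}$, which is exactly the asserted bound. This integrability guarantees that every integral below converges absolutely and powers the final density step.

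For the algebraic step, write $g(\tau,v,w):=f(\tau,v)f(\tau,w)/|v-w|$. By Theorem \ref{Theorem 2.3 }, $\nabla_{v}\sqrt{f}$ is a genuine (locally weighted $L^{2}$) function, so $\nabla_{v}f=2\sqrt{f}\,\nabla_{v}\sqrt{f}$ is locally integrable; together with the local integrability of $|v-w|^{-1}$ in $\mathbb{R}^{3}$ this makes the identity $\sqrt{g}\,\nabla_{v}\sqrt{g}=\tfrac12\nabla_{v}g$ (and its $w$ analogue) legitimate a.e. Expanding by the Leibniz rule, $\nabla_{v}g=|v-w|^{-1}f(w)(\nabla f)(v)-f(v)f(w)(v-w)|v-w|^{-3}$ and symmetrically for $\nabla_{w}g$, so $(\nabla_{v}\sqrt{g}-\nabla_{w}\sqrt{g})\sqrt{g}=\tfrac12(\nabla_{v}g-\nabla_{w}g)$ equals $\tfrac12|v-w|^{-1}\big(f(w)(\nabla f)(v)-f(v)(\nabla f)(w)\big)$ plus a vector parallel to $v-w$. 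That parallel vector is annihilated by $\Pi(v-w)$ because $\Pi(z)z=0$, and since $\Pi(z)=|z|\,a(z)$ the right-hand side of item 4 collapses to
\[
-\frac12\int_{0}^{t}\iint a_{ij}(v-w)\big(f(w)\,\partial_{j}f(v)-f(v)\,\partial_{j}f(w)\big)\big(\partial_{i}\varphi(v)-\partial_{i}\varphi(w)\big)\,dv\,dw\,d\tau,
\]
which is the classical Landau current paired against $\nabla\varphi$.

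Now I would integrate by parts in $v$ on the two terms carrying $\partial_{j}f(v)$ and in $w$ on the two carrying $\partial_{j}f(w)$, using $\sum_{j}\partial_{v_{j}}a_{ij}(v-w)=b_{i}(v-w)$ and $\sum_{j}\partial_{w_{j}}a_{ij}(v-w)=-b_{i}(v-w)$. Collecting the four resulting contributions and simplifying with the symmetry $v\leftrightarrow w$ and the parities $a_{ij}(-z)=a_{ij}(z)$, $b_{i}(-z)=-b_{i}(z)$, the second-order terms assemble into $-f(v)f(w)\,a_{ij}(v-w)\big(\partial_{ij}\varphi(v)+\partial_{ij}\varphi(w)\big)$ and the first-order terms into $-2f(v)f(w)\,b_{i}(v-w)\big(\partial_{i}\varphi(v)-\partial_{i}\varphi(w)\big)$; feeding these back through the factor $-\tfrac12$ produces exactly the right-hand side in the statement. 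Finally, taking $t=T$ in item 4 and using that $\varphi\in C_{0}^{2}([0,T)\times\mathbb{R}^{3})$ vanishes near $t=T$ disposes of the boundary term $\int f(T,v)\varphi(T,v)\,dv$, so the left-hand side is the displayed one.

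The main obstacle will be making the two formal moves — the product rule for $\sqrt{g}$ and the integrations by parts — rigorous in spite of the singularities of the kernels $a_{ij}\sim|z|^{-1}$, $b_{i}(z)=-2z_{i}|z|^{-3}$, and $\nabla|z|^{-1}=-z|z|^{-3}$ at the diagonal $v=w$, with $f$ enjoying only the regularity of Theorem \ref{Theorem 2.3 }. I would handle this by a truncation/mollification scheme: replace $f$ by a mollification, and/or excise small neighborhoods of $\{v=w\}$ and of $\{|v-w|\ \text{large}\}$, carry out the manipulations where they are classical, and pass to the limit. The limit passage is controlled by the available a priori bounds — conservation of mass and energy and the entropy bound, i.e. $f\in L^{\infty}((0,T);L_{2}^{1}\cap L\log L)$; the bound $f\in L^{1}((0,T);L_{-3}^{3})$ just obtained; and $\int_{0}^{T}\!\int_{\mathbb{R}^{3}}(1+|v|^{2})^{-3/2}|\nabla_{v}\sqrt{f}|^{2}\,dv\,dt<\infty$ from Theorem \ref{Theorem 2.3 } — combined with H\"older and Hardy--Littlewood--Sobolev estimates to dominate both the near-diagonal contributions (where $|v-w|^{-1}$ and $|v-w|^{-2}$ remain locally integrable in $\mathbb{R}^{3}$) and the large-velocity tails through the weights. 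A last, comparatively minor, point is to upgrade the admissible test functions from the $C^{1}([0,T];C_{0}^{\infty}(\mathbb{R}^{3}))$ class of Definition \ref{Definition 2.1} to $C_{0}^{2}([0,T)\times\mathbb{R}^{3})$ by a density argument, which once more only uses $f\in L^{1}((0,T);L_{-3}^{3})$.
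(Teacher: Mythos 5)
This corollary is quoted in the paper directly from \cite{key-4} (Corollary 1.1) with no proof supplied, so there is no in-paper argument to compare against; what you have written is essentially a reconstruction of Desvillettes' own proof, and its outline is correct. The derivation of $f\in L^{1}((0,T),L_{-3}^{3}(\mathbb{R}^{3}))$ via $h=\sqrt{f}\,(1+|v|^{2})^{-3/4}$, the weighted dissipation bound of Theorem \ref{Theorem 2.3 }, the mass bound, and the Sobolev embedding $H^{1}(\mathbb{R}^{3})\hookrightarrow L^{6}(\mathbb{R}^{3})$ is exactly the standard route, and the identity $\|h(t,\cdot)\|_{L^{6}}^{2}=\|(1+|v|^{2})^{-3/2}f(t,\cdot)\|_{L^{3}}$ is correct. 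The algebraic collapse of the ``square-root'' form in item 4 of Definition \ref{Definition 2.1} — using $\sqrt{g}\,\nabla\sqrt{g}=\tfrac12\nabla g$, the annihilation of the component parallel to $v-w$ by $\Pi(v-w)$, and $\Pi(z)=|z|\,a(z)$ — and the subsequent fourfold integration by parts with $\partial_{v_{j}}a_{ij}(v-w)=b_{i}(v-w)=-\partial_{w_{j}}a_{ij}(v-w)$ do reproduce the stated right-hand side; I checked the sign bookkeeping and it closes.

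The one genuine gap is the part you yourself flag: the chain rule for $\sqrt{g}$ and the integrations by parts across the singular diagonal are only asserted to be fixable by ``truncation/mollification,'' not carried out. That step is not routine — it is the actual content of the proof in \cite{key-4} — because one must show that the intermediate quantities such as $\iint |a_{ij}(v-w)|\,f(t,w)\,|\nabla_{v}f(t,v)|\,|\nabla\varphi|\,dv\,dw$ are finite before the four terms can be separated and integrated by parts individually; this requires combining $\nabla_{v}f=2\sqrt{f}\,\nabla_{v}\sqrt{f}$ with the weighted $L^{2}$ bound on $\nabla_{v}\sqrt{f}$, the $L^{1}_{t}L^{3}_{-3}$ bound, and the compact support of $\varphi$, and it is precisely here that the hypothesis $f\in L^{1}_{t}L^{3}_{-3}$ (rather than mere finite mass and entropy) is indispensable. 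Your list of a priori bounds is the right toolkit, so the sketch is credible, but as written the proof defers its hardest estimate to a one-sentence outline. If you intend this as a self-contained proof rather than a roadmap to \cite{key-4}, that limit passage needs to be written out.
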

\begin{onehalfspace}

\end{onehalfspace}\begin{rem}
\begin{onehalfspace}
\begin{flushleft}
\label{Remark 2.4} If $f$ satisfies the condition $||f(t,\cdot)||_{L^{q}(B)}\leq S_{0}$
for some $S_{0}>0,q>3$ then for any $\varphi\in C_{0}^{\infty}((0,S)\times B)$
it holds that 
\par\end{flushleft}
\begin{flushleft}
\begin{equation}
-\stackrel[0]{T}{\int}\underset{\mathbb{R}^{3}}{\int}f\partial_{t}\varphi+\stackrel[0]{T}{\int}\underset{\mathbb{R}^{3}}{\int}\overline{a}_{ij}^{f}\partial_{i}f\partial_{j}\varphi=\stackrel[0]{T}{\int}\underset{\mathbb{R}^{3}}{\int}f\overline{b}_{i}^{f}\partial_{i}\varphi(t,v).\label{eq:-3}
\end{equation}
\par\end{flushleft}
\end{onehalfspace}
\begin{onehalfspace}
As we will see in lemma (\ref{Lemma 3.6}), subject to the above condition
$\overline{a}_{ij}^{f},\overline{b}_{i}^{f}\in L_{\mathrm{loc}}^{\infty}(\omega)$,
which together with theorem (\ref{Theorem 2.3 }), justifies that
the above integrals are well defined. 
\end{onehalfspace}
\end{rem}
\begin{onehalfspace}
Corollary \ref{Corollary 2.3} and the above remark will enable us
to apply classical regularity, uniqueness and existence results from
the theory of linear parabolic PDEs, which have the above weak formulation.
We finish this section by stating the main result of this work 
\end{onehalfspace}
\begin{thm}
\begin{onehalfspace}
\begin{flushleft}
\label{Theorem 1.1 } Let $f$ be a H-solution on $[0,T]\times\mathbb{R}^{3}$.
Let $\omega=(0,S)\times B$ be an open cylinder in $[0,T]\times\mathbb{R}^{3}$.
Suppose there exist $S_{0}>0,q>3$ such that for all $t\in(0,S)$
one has $||f(t,\cdot)||_{L^{q}(B)}\leq S_{0}$ . Then for all $0<\alpha<1$
we have $f\in H_{\alpha+2}^{\ast}(\overline{\Omega})$ for each $\Omega\Subset\omega$. 
\par\end{flushleft}
\end{onehalfspace}
\end{thm}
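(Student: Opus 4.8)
The plan is to implement the three-step program announced in the introduction: extract regularity of the coefficients $\overline{a}_{ij}^{f},\overline{b}_{i}^{f}$ from the local bound $\|f(t,\cdot)\|_{L^{q}(B)}\le S_{0}$, upgrade $f$ to a local $W^{2,1}$ solution by linear parabolic theory, and then bootstrap up to $H_{2+\alpha}^{\ast}$ for every $\alpha\in(0,1)$. Fix throughout nested cylinders $\Omega\Subset\Omega'\Subset\omega$.

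\emph{Step 1 (coefficients; Lemma \ref{Lemma 3.6}).} Writing $a_{ij}=a_{ij}\mathbf{1}_{|z|<R}+a_{ij}\mathbf{1}_{|z|\ge R}$ with $R$ small enough that $v-B_{R}(0)\subset B$ for $v$ in the spatial projection of $\Omega'$, and using $|a_{ij}(z)|\le|z|^{-1}$ together with $|z|^{-1}\mathbf{1}_{|z|<R}\in L^{q'}(\mathbb{R}^{3})$ (valid since $q>3/2$, a fortiori $q>3$), Hölder's inequality bounds the singular part of $\overline{a}_{ij}^{f}(t,v)$ by $C_{R}\|f(t,\cdot)\|_{L^{q}(B)}\le C_{R}S_{0}$, while the tail is $\le R^{-1}M(f)$; hence $\overline{a}_{ij}^{f}\in L^{\infty}(\Omega')$ uniformly in $t$. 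The same splitting applied to the kernels $\nabla_{v}a_{ij}$ and $b_{i}$, both homogeneous of degree $-2$ and so controlled by $|z|^{-2}$, which lies in $L^{q'}_{\mathrm{loc}}$ precisely because $q>3$, yields $\nabla_{v}\overline{a}_{ij}^{f},\overline{b}_{i}^{f}\in L^{\infty}(\Omega')$ uniformly in $t$; in particular each $\overline{a}_{ij}^{f}(t,\cdot)$ is uniformly Lipschitz on $\Omega'$. Continuity in $(t,v)$ follows by approximating $a_{ij}$ by smooth compactly supported truncations $a_{ij}^{\varepsilon}$: the error $\overline{a}_{ij}^{f}-a_{ij}^{\varepsilon}\star f(t,\cdot)$ is $O(\varepsilon^{(3-q')/q'})$ uniformly on $\Omega'$ by the above Hölder estimate, while each $a_{ij}^{\varepsilon}\star f(t,\cdot)(v)$ is jointly continuous because $f\in C([0,T],\mathcal{D}'(\mathbb{R}^{3}))$ and $\|f(t,\cdot)\|_{L^{1}}$ is bounded. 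Finally one needs uniform ellipticity: since $\overline{a}_{ij}^{f}(t,v)\xi_{i}\xi_{j}=\int_{\mathbb{R}^{3}}|v-w|^{-1}|\Pi(v-w)\xi|^{2}f(t,w)\,dw$, the standard coercivity estimate for the Landau diffusion matrix, which uses only the uniform bounds on mass, energy and entropy of $f(t,\cdot)$ (items 1 and 3 of Definition \ref{Definition 2.1}) through a non-concentration argument, gives $\overline{a}_{ij}^{f}\xi_{i}\xi_{j}\ge\lambda|\xi|^{2}$ on $\overline{\Omega}$ with $\lambda>0$.

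\emph{Step 2 (first gain of regularity; Theorem \ref{Theorem 3.4}).} By Theorem \ref{Theorem 2.3 } and the identity $\nabla f=2\sqrt{f}\,\nabla\sqrt{f}$ one has $f\in W_{2}^{1,0}(\Omega')$, so $f$ is a weak solution of the divergence-form equation of Remark \ref{Remark 2.4}; using $\overline{b}_{j}^{f}=\sum_{i}\partial_{i}\overline{a}_{ij}^{f}$ (legitimate since $\overline{a}_{ij}^{f}$ is Lipschitz in $v$ by Step 1) and the distributional identity $\sum_{ij}\partial_{ij}a_{ij}=-8\pi\delta_{0}$, this is equivalent to the non-divergence form $\partial_{t}f-\overline{a}_{ij}^{f}\partial_{ij}f=8\pi f^{2}$ of (\ref{eq:1-1}), the first-order terms cancelling; note there is no drift. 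Since $\overline{a}_{ij}^{f}$ is continuous and uniformly elliptic on $\overline{\Omega'}$, $8\pi f^{2}\in L^{q/2}(\Omega')$ and $f\in L^{q}(\Omega')$, the classical interior $W_{p}^{2,1}$ parabolic estimate (the content of Theorem \ref{Theorem 3.4}) gives $f\in W_{q/2}^{2,1}$ on a smaller cylinder.

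\emph{Step 3 (bootstrap).} Parabolic Sobolev embedding in space–time dimension $3+1$ (parabolic dimension $5$) gives $W_{p}^{2,1}\hookrightarrow L^{p^{\ast}}$ with $\tfrac1{p^{\ast}}=\tfrac1p-\tfrac25$ when $p<\tfrac52$, and $W_{p}^{2,1}\hookrightarrow H^{\beta,\frac{\beta}{2}}$ for some $\beta\in(0,1)$ when $p>\tfrac52$. Parametrising by the $L$-exponent $2p_{k}$ of $f$, the iteration $f\in L^{2p_{k}}_{\mathrm{loc}}\Rightarrow f^{2}\in L^{p_{k}}_{\mathrm{loc}}\Rightarrow f\in W_{p_{k}}^{2,1}\text{ (loc)}\Rightarrow f\in L^{2p_{k+1}}_{\mathrm{loc}}$ with $\tfrac1{p_{k+1}}=2\tfrac1{p_{k}}-\tfrac45$, started at $p_{0}=q/2>3/2$, strictly increases and reaches $p_{k}>5/2$ after finitely many steps (the recursion for $1/p_k$ is an affine map with repelling fixed point $4/5$); hence $f\in H^{\beta,\frac{\beta}{2}}(\overline{\Omega})\cap L^{\infty}(\Omega)$ for every $\Omega\Subset\omega$. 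Now revisit Step 1 with $f\in H^{\beta,\frac{\beta}{2}}_{\mathrm{loc}}$: $\nabla_{v}^{2}\overline{a}_{ij}^{f}=(\nabla_{v}^{2}a_{ij})\star f$ is a Calderón–Zygmund singular integral of $f$ modulo a smooth tail, so $\overline{a}_{ij}^{f}$ is $C^{2,\beta}$ in $v$ on compacts and $C^{\beta/2}$ in $t$, whence $\overline{a}_{ij}^{f}\in H^{\beta,\frac{\beta}{2}}(\overline{\Omega})$ and is uniformly elliptic; since $8\pi f^{2}\in H^{\beta,\frac{\beta}{2}}(\overline{\Omega})$ as well, interior Schauder estimates for the non-divergence equation (which produce exactly the weighted space $H_{2+\beta}^{\ast}$, no boundary data being needed) give $f\in H_{2+\beta}^{\ast}(\overline{\Omega})$. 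Feeding this improved regularity of $f$ back into the coefficients — which can then be taken in $C^{k,\alpha}$ for any $k$ and any $\alpha<1$, the only genuine limitation being the exponent $\alpha<1$ intrinsic to convolution against $\nabla_{v}a_{ij}$ and $\nabla_{v}^{2}a_{ij}$ — and applying Schauder once more yields $f\in H_{2+\alpha}^{\ast}(\overline{\Omega})$ for every $0<\alpha<1$ and every $\Omega\Subset\omega$, which is the assertion.

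\emph{Main obstacle.} The crux is Step 1: establishing the uniform lower ellipticity bound for $\overline{a}_{ij}^{f}$ (which rests on the quantitative non-concentration estimate furnished by the uniformly controlled mass, energy and entropy) and obtaining joint continuity of the coefficients despite the fact that, a priori, $f$ is only continuous in time with values in $\mathcal{D}'$. A secondary, structural difficulty is the chicken-and-egg nature of the bootstrap: one must first gain regularity for $f$ using only \emph{continuous}, uniformly elliptic coefficients ($W_{p}^{2,1}$ theory), and only once $f$ is Hölder in space–time do the coefficients become Hölder — which is precisely what unlocks the Schauder step and the final exponent $\alpha<1$.
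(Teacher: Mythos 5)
Your overall architecture --- coefficient regularity from the $L^{q}$ hypothesis, a first passage to $W^{2,1}$ via linear parabolic theory, then an integrability bootstrap terminating in Schauder theory --- is the same as the paper's, and most of the individual computations (the H\"older splitting of the kernels using $q>3$, the cancellation of the drift reducing the divergence form to $\partial_{t}f-\overline{a}_{ij}^{f}\partial_{ij}f=8\pi f^{2}$, the recursion $1/p_{k+1}=2/p_{k}-4/5$ with repelling fixed point) are sound. The genuine gap is that at the points where the regularity of $f$ is actually \emph{upgraded} you invoke ``interior estimates'' as though they were regularity theorems. This is most serious at the final step: Theorem \ref{Theorem 4.6 } is an a priori estimate whose hypothesis is $u\in H_{\alpha+2}^{\ast}(\overline{\omega})$ --- precisely the conclusion you are after --- so it cannot by itself yield $f\in H_{\alpha+2}^{\ast}$. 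The paper closes this loop with Corollary \ref{Corollary 4.5}: it solves the localized Dirichlet problem in $H_{\alpha+2}^{\ast}$ by the method of continuity (using the Schauder bound only as an a priori estimate, with the heat operator as the solvable anchor $L_{0}$) and then identifies that solution with $\zeta f$ by uniqueness. Some bridge of this kind, or an explicit citation of an interior Schauder \emph{regularity} theorem for strong $W_{r}^{2,1}$ solutions, is indispensable; as written, ``interior Schauder estimates give $f\in H_{2+\beta}^{\ast}$'' assumes what is to be proved. The same remark applies, less critically, to your Step 2, where you are saved only because Theorem \ref{Theorem 3.4} may be cited as a black box --- note, though, that its conclusion is $f\in W_{2}^{2,1}$, not $W_{q/2}^{2,1}$, and that the paper obtains it exactly by the existence-plus-uniqueness device (Theorems \ref{Theorem 3.7} and \ref{Theorem 3.8}) rather than by an interior estimate applied to the weak solution.

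A secondary issue: your bootstrap suppresses the commutator terms produced by the cutoff. After localization the right-hand side is not $8\pi\zeta f^{2}$ but contains $\overline{a}_{ij}^{f}\partial_{j}f\,\partial_{i}\zeta$, so the limiting exponent at each stage is governed by the integrability of $\nabla f$ (initially only $L^{2}$, from Theorem \ref{Theorem 2.3 }), not of $f^{2}$. This is precisely why the paper inserts Corollary \ref{Corollary 3.10 } ($\nabla f\in L^{10/3}$) and iterates Lemma \ref{Lemma 3.9 } ($10/3\to10\to$ any $r$) instead of running a recursion on $f$ alone; your iteration still terminates, but the bookkeeping must track $\nabla f$ as well. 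Finally, your passing claim that $\overline{a}_{ij}^{f}$ becomes $C^{2,\beta}$ in $v$ is more than is needed (and requires a near/far decomposition to justify); the paper's Lemma \ref{Lemma 4.4} establishes only spatial H\"older continuity uniformly in $t$, which suffices because the $H_{\alpha}^{\ast}$ seminorm appearing in Theorem \ref{Theorem 4.6 } involves only spatial increments.
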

\begin{rem}
\begin{onehalfspace}
\begin{flushleft}
We point out that theorem \ref{Theorem 1.1 } implies that $f$ is
in fact a classical solution to equation (\ref{eq:1-1}) in $\omega$.
Indeed, from the last assertion of theorem \ref{Theorem 3.7} we know
that $f$ is a strong solution to equation \ref{eq:1-1}, while theorem
\ref{Theorem 1.1 } in particular implies that the weak time derivative
and the first and second order weak spatial derivatives are continuous. 
\par\end{flushleft}
\end{onehalfspace}
\end{rem}
\begin{onehalfspace}

\section{Conditional Regularity}
\end{onehalfspace}
\begin{onehalfspace}

\subsection{From H-Solutions To $\mathbf{\mathbf{W}_{2}^{2,1}}$}
\end{onehalfspace}

\begin{onehalfspace}
This subsection is the first step towards the conditional local regularity
of H-solutions as stated in Theorem \ref{Theorem 1.1 }. We recall
that we have adapted the notation $\omega=J\times B=(0,S)\times B.$
We prove 
\end{onehalfspace}
\begin{thm}
\begin{onehalfspace}
\begin{flushleft}
\label{Theorem 3.4} Let $f$ be a H-solution. Suppose there exist
$S_{0}>0$ and $q>3$ such that for all $t\in J$ one has $||f(t,\cdot)||_{L^{q}(B)}\leq S_{0}$
. Then $f\in W_{2}^{2,1}(\Omega)$ for all $\Omega\Subset\omega$.
Moreover, $f$ is a strong solution to equation (\ref{eq:1-1}) in
$\omega$, that is 
\[
\partial_{t}f-\overline{a_{ij}}^{f}\frac{\partial f}{\partial v_{i}\partial v_{j}}=8\pi f^{2}
\]
 for a.e. $(t,x)\in\omega$. 
\par\end{flushleft}
\end{onehalfspace}

\end{thm}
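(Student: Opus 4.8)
## Proof proposal for Theorem \ref{Theorem 3.4}

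\textbf{Overall strategy.} The plan is to exploit the weak formulation in Remark \ref{Remark 2.4}, which exhibits $f$ locally as a weak solution of a linear parabolic equation in divergence form $\partial_t f - \partial_j(\overline{a}_{ij}^f \partial_i f) = \partial_i(f\,\overline{b}_i^f) + (\text{lower order})$, once we know enough about the coefficients. So the first order of business is to establish that, under the hypothesis $\|f(t,\cdot)\|_{L^q(B)}\le S_0$ with $q>3$, the coefficients $\overline{a}_{ij}^f$ and $\overline{b}_i^f$ are bounded, indeed H\"older (or at least continuous) on compact subsets of $\omega$; this is the content of Lemma \ref{Lemma 3.6}, which I would invoke. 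The point is that $a_{ij}(z)\sim |z|^{-1}$ and $b_i(z)\sim |z|^{-2}$ are locally integrable in $\mathbb{R}^3$, and the convolution $a_{ij}\star f(t,\cdot)$ splits into a local piece (controlled by H\"older's inequality against $L^q_{\mathrm{loc}}$ with $q>3$, since $|z|^{-1}\in L^{q'}_{\mathrm{loc}}$ for $q'<3$) and a tail piece (controlled by the uniform mass/energy bounds of the H-solution). Similarly $b_i\star f$ is bounded because $|z|^{-2}\in L^{q'}_{\mathrm{loc}}$ for $q'<3/2$, which is compatible with $q>3$.

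\textbf{Step 1: uniform parabolicity.} Next I would check that the matrix $\overline{a}_{ij}^f(t,v)$ is uniformly elliptic on compact subsets of $\omega$, i.e. $\lambda |\xi|^2 \le \overline{a}_{ij}^f \xi_i\xi_j \le \Lambda|\xi|^2$ with $\lambda,\Lambda>0$ depending only on the data. The upper bound follows from boundedness of the coefficients. For the lower bound one uses the standard fact (going back to Desvillettes--Villani) that for a distribution function with finite mass and entropy bounded above, and which is not concentrated on a line, $\overline{a}_{ij}^f(v)\xi_i\xi_j \ge c(v)|\xi|^2$ with $c$ locally bounded below; the entropy bound from Definition \ref{Definition 2.1}(3) is exactly what prevents concentration, and on the compact set $\overline{\Omega}$ this gives a uniform $\lambda>0$.

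\textbf{Step 2: linear parabolic theory.} With coefficients $\overline{a}_{ij}^f\in C(\overline{\Omega})$ (or $H^{l,l/2}_{\mathrm{loc}}$), bounded, uniformly elliptic, and with $\overline{b}_i^f$ and $f$ bounded so that the right-hand side $\partial_i(f\overline{b}_i^f)$ has $L^2$ coefficients and the source $8\pi f^2 \in L^2_{\mathrm{loc}}$ (here $q>3$ gives $f\in L^\infty_t L^q_v$ hence $f^2\in L^{q/2}_{\mathrm{loc}}$ with $q/2>3/2$; combined with Theorem \ref{Theorem 2.3 } giving $f\in W_2^{1,0}(\omega)$, one has enough integrability), I would freeze the equation on a slightly smaller cylinder and apply the classical $L^p$ / $W_q^{2,1}$ interior regularity theory for linear parabolic equations (Ladyzhenskaya--Solonnikov--Ural'tseva, or Lieberman). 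The mechanism is: $f\in W_2^{1,0}_{\mathrm{loc}}$ is a weak solution; by interior $H^1$ energy estimates plus De Giorgi--Nash--Moser one first gets $f$ locally bounded and H\"older; feeding the now-H\"older coefficients and the now-better right-hand side into Schauder/$L^p$ parabolic estimates yields $f\in W_2^{2,1}(\Omega')$ on intermediate cylinders $\Omega\Subset\Omega'\Subset\omega$. A routine covering/cutoff argument upgrades this to every $\Omega\Subset\omega$.

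\textbf{Step 3: from weak to strong solution.} Once $f\in W_2^{2,1}_{\mathrm{loc}}(\omega)$, the weak identity \eqref{eq:-3} can be integrated by parts back: the term $\int \overline{a}_{ij}^f\partial_i f\,\partial_j\varphi$ becomes $-\int \partial_j(\overline{a}_{ij}^f\partial_i f)\varphi$, and since $\overline{a}_{ij}^f$ has (at least) an $L^\infty$ spatial derivative bound via Lemma \ref{Lemma 3.6} one rewrites $\partial_j(\overline{a}_{ij}^f\partial_i f) = \overline{a}_{ij}^f\partial_{ij}f + (\partial_j\overline{a}_{ij}^f)\partial_i f = \overline{a}_{ij}^f\partial_{ij}f + \overline{b}_i^f \partial_i f$ (using the identity $b_i = \sum_j \partial_j a_{ij}$, so that $\sum_j\partial_j \overline{a}_{ij}^f = \overline{b}_i^f$). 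Matching against the $\overline{b}_i^f$-term already present in \eqref{eq:-3}, and recalling that the bilinear expression on the right of Corollary \ref{Corollary 2.3} is exactly $8\pi f^2$ after symmetrization (the classical computation $\overline{a}_{ij}^f\partial_{ij}f + \text{drift} = \overline{a}_{ij}^f\partial_{ij}f + 8\pi f^2$ with the $8\pi$ coming from $-\Delta|z|^{-1}=4\pi\delta$), one concludes that $\partial_t f - \overline{a}_{ij}^f\partial_{ij}f = 8\pi f^2$ holds in $L^2_{\mathrm{loc}}$, hence a.e. in $\omega$, as claimed.

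\textbf{Main obstacle.} The delicate point is \emph{not} the abstract parabolic machinery but the transition in Step 2 of feeding the weak formulation into the linear theory with just enough integrability: a priori $f$ is only in $W_2^{1,0}_{\mathrm{loc}}$ and $8\pi f^2$ only in $L^{q/2}_{\mathrm{loc}}$, so one must be careful that this is a legitimate input for the first De Giorgi--Nash--Moser step (it is, since $q/2>3/2 = (n+2)/2$ in dimension $n=3$ is the Serrin exponent guaranteeing boundedness). The second subtlety is justifying the integration by parts in Step 3, which genuinely requires the \emph{gradient} of the coefficients $\overline{a}_{ij}^f$ to be under control — precisely why Lemma \ref{Lemma 3.6} must give differentiability of the coefficients, not just continuity, and why the identity $\sum_j\partial_j\overline{a}_{ij}^f = \overline{b}_i^f$ is the linchpin that makes the nondivergence form in the statement equivalent to the divergence form in \eqref{eq:-3}.
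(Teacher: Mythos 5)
Your outline shares the paper's essential ingredients -- regularity and local uniform ellipticity of $\overline{a}_{ij}^{f},\overline{b}_{i}^{f}$ (Lemma \ref{Lemma 3.6}, Corollary \ref{Corollary 3.3 }), localization, linear parabolic theory, and the identities $\partial_{j}\overline{a}_{ij}^{f}=\overline{b}_{i}^{f}$ and $\partial_{i}\overline{b}_{i}^{f}=-8\pi f$ to pass from divergence to nondivergence form -- but it extracts the $W_{2}^{2,1}$ regularity by a different mechanism. You propose interior De Giorgi--Nash--Moser followed by ``Schauder/$L^{p}$'' estimates; the paper instead sets $u=\zeta f$, shows $u$ is a $W_{2}^{1,0}$ weak solution of the homogeneous Dirichlet problem $\partial_{t}u-\partial_{j}(\overline{a}_{ij}^{f}\partial_{i}u)=F$ on a slightly larger cylinder with $F\in L^{2}$, invokes \emph{existence} of a $W_{2}^{2,1}$ strong solution of that problem (Theorem \ref{Theorem 3.7}, which needs exactly the spatial differentiability of the coefficients you rightly single out), and identifies it with $u$ by \emph{uniqueness in the weaker class} $W_{2}^{1,0}$ (Theorem \ref{Theorem 3.8}). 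This ``existence plus uniqueness implies regularity'' device avoids the circularity lurking in your Step 2: Schauder theory applies to nondivergence-form equations and yields H\"older, not Sobolev, regularity, so as written it cannot produce $W_{2}^{2,1}$ from a $W_{2}^{1,0}$ weak solution; you would have to replace it by an honest interior $W_{2}^{2,1}$ estimate (difference quotients) for divergence-form equations with coefficients Lipschitz in $v$. The strong-solution claim then comes for free in the paper from Theorem \ref{Theorem 3.7}, whereas your Step 3 reconstructs it by hand; your remark that the bilinear form in Corollary \ref{Corollary 2.3} ``is exactly $8\pi f^{2}$ after symmetrization'' is not right as stated (that form is the full weak collision operator; the $8\pi f^{2}$ arises only from the term $f\,\partial_{i}\overline{b}_{i}^{f}$ after integrating by parts), though the identity you ultimately use is the correct one.

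There is one genuine gap in the integrability bookkeeping. You justify $8\pi f^{2}\in L^{2}_{\mathrm{loc}}$ and the applicability of De Giorgi--Nash--Moser by noting $f^{2}\in L^{q/2}$ with $q/2>3/2$, identifying $3/2$ with the Serrin exponent $(n+2)/2$. In dimension $n=3$ that exponent is $5/2$, not $3/2$, and $f^{2}\in L^{q/2}$ gives $f^{2}\in L^{2}$ only when $q\geq 4$; so for $3<q<4$ neither claim follows. The paper closes this by exploiting the sign of the source rather than its integrability: since $-\partial_{i}\overline{b}_{i}^{f}$ is a positive multiple of $f$, the function $f$ is a subsolution of the homogeneous drift--diffusion inequality (\ref{eq:-1-1}), so Theorem \ref{Theorem 3.5} applies with no assumption on $f^{2}$ and yields $f\in L^{\infty}_{\mathrm{loc}}$ (Lemma \ref{Lemma 3.6}, part 1); only after that is the localized right-hand side $F$ (which contains $f^{2}$ and $\overline{a}_{ij}^{f}\partial_{i}f$) seen to lie in $L^{2}$. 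Your argument needs this step, or some substitute for it, before the linear theory can be launched.
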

\begin{onehalfspace}
First, we recall that local coercivity for the coefficients $\overline{a}_{ij}^{f}$
has been established in proposition 2.3 of \cite{1}. A straightforward
conclusion of the latter is local ellipticity of the coefficients,
as summarized in the following 
\end{onehalfspace}
\begin{prop}
\begin{onehalfspace}
\begin{flushleft}
\label{Proposition 2.2  } \textup{(Local Ellipticity)} There are
constants $0<c=c(M_{0},E_{0},H_{0},K),C=C(M_{0},S_{0},q)$ with the
following property. Suppose $f\in L_{2}^{1}(\mathbb{R}^{3})\cap L\log L(\mathbb{R}^{3})$
satisfy $f\geq0$ a.e. and $M(f)=M_{0},E(f)\leq E_{0},H(f)\leq H_{0},||f||_{L^{q}(B)}\leq S_{0}$
for some $\frac{3}{2}<q\leq\infty$. Let $K\Subset\mathbb{R}^{3}$.
Then: $\forall\xi\in\mathbb{R}^{n},v\in K:c|\xi|^{2}\leq\overline{a_{ij}}^{f}(v)$$\xi_{i}\xi_{j}\leq C|\xi|^{2}$.
\par\end{flushleft}
\end{onehalfspace}
\end{prop}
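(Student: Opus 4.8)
The plan is to prove the two inequalities by entirely separate arguments, the lower one being essentially borrowed and the upper one elementary. For the lower bound I would simply invoke the local coercivity estimate of Proposition~2.3 in \cite{15}: that result furnishes a bound of the form $\overline{a_{ij}}^{f}(v)\xi_{i}\xi_{j}\ge c_{0}\,\theta(v)\,|\xi|^{2}$, valid for all $\xi\in\mathbb{R}^{3}$ and a.e. $v$, where $c_{0}=c_{0}(M_{0},E_{0},H_{0})$ and $\theta$ is a fixed positive continuous function behaving like a negative power of $1+|v|$. Restricting $v$ to the compact set $K$ turns the $v$-dependent weight into a genuine constant $c:=c_{0}\inf_{K}\theta>0$, depending only on $M_{0},E_{0},H_{0}$ and $K$. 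This already yields the left inequality, so the remaining work is confined to the upper bound, for which $E_{0}$ and $H_{0}$ play no role.

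For the upper bound the starting observation is purely algebraic: for each $z\neq0$ the matrix $\Pi(z)=I-(z/|z|)^{\otimes2}$ is the orthogonal projection onto the hyperplane $z^{\perp}$, hence $0\le\Pi(z)\xi\cdot\xi\le|\xi|^{2}$ and therefore $0\le a_{ij}(z)\xi_{i}\xi_{j}\le|\xi|^{2}/|z|$ for all $\xi$. Integrating this pointwise inequality against $f\ge0$ gives
\[
\overline{a_{ij}}^{f}(v)\xi_{i}\xi_{j}=\int_{\mathbb{R}^{3}}a_{ij}(v-z)\xi_{i}\xi_{j}\,f(z)\,dz\le|\xi|^{2}\int_{\mathbb{R}^{3}}\frac{f(z)}{|v-z|}\,dz,
\]
so the whole matter reduces to an $L^{\infty}$ bound, uniform for $v\in K$, on the Riesz-type potential $\int f(z)/|v-z|\,dz$.

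To bound that potential I would fix a radius $\delta$ with $0<\delta<\mathrm{dist}(K,\partial B)$ --- this is exactly where the \emph{locality} of the hypothesis is used, $f$ being assumed $L^{q}$ only over $B$ --- so that $B(v,\delta)\subset B$ for every $v\in K$, and split the integral over $B(v,\delta)$ and over its complement. On the complement the kernel is at most $\delta^{-1}$, so that contribution is bounded by $\delta^{-1}M_{0}$. On $B(v,\delta)$ I would use that $q>\tfrac{3}{2}$ forces the conjugate exponent $q'=q/(q-1)$ to be $<3$, so that $z\mapsto|z|^{-1}$ lies in $L^{q'}(B(0,\delta))$ with $\|\,|\cdot|^{-1}\|_{L^{q'}(B(0,\delta))}=\kappa(q)\,\delta^{3/q'-1}$; Hölder's inequality then bounds that contribution by $\|f\|_{L^{q}(B(v,\delta))}\,\|\,|\cdot|^{-1}\|_{L^{q'}(B(0,\delta))}\le S_{0}\,\kappa(q)\,\delta^{3/q'-1}$. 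Summing the two pieces gives $\int f(z)/|v-z|\,dz\le C$ with $C=C(M_{0},S_{0},q)$ once $\delta$ has been fixed in terms of the geometry of $K\subset B$, and hence $\overline{a_{ij}}^{f}(v)\xi_{i}\xi_{j}\le C|\xi|^{2}$, which is the right inequality.

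I do not expect any real obstacle: the proposition is the juxtaposition of the already-available coercivity bound, a one-line positivity fact for the projection $\Pi$, and a textbook decomposition of a Riesz potential. The only point that needs a little care is the bookkeeping of domains --- since $f$ is controlled in $L^{q}$ only on $B$, the cut-off radius $\delta$ must be kept below $\mathrm{dist}(K,\partial B)$ --- and this is precisely the reason the ellipticity one obtains is local rather than global.
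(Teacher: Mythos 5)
Your proposal is correct and follows essentially the same route as the paper: the lower bound is quoted from Proposition 2.3 of \cite{15}, and the upper bound comes from the pointwise kernel bound $|a_{ij}(z)\xi_i\xi_j|\lesssim|\xi|^2/|z|$ followed by a near/far splitting of the resulting Riesz potential, with H\"older (using $q>\tfrac32$, hence $q'<3$) on the near part and the mass bound on the far part. Your version is in fact slightly more careful than the paper's about centering the small ball at $v$ and keeping it inside $B$, but this is a refinement of the same argument, not a different one.
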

\begin{proof}
\begin{onehalfspace}
The coercivity estimate $c|\xi|^{2}\leq\overline{a_{ij}}^{f}(v)\xi_{i}\xi_{j}$
was established in proposition 2.3 of \cite{1}. In addition for all
$\xi\in S^{1}$ we have 

\[
|\overline{a}_{ij}^{f}(v)\xi_{i}\xi_{j}|\leq\underset{\mathbb{R}^{3}}{\int}\frac{2}{|z|}|f(v-z)|dz=\underset{B}{\int}\frac{2}{|z|}|f(v-z)|dz+\underset{\mathbb{R}^{3}-B}{\int}\frac{2}{|z|}|f(v-z)|dz\leq A||f||_{L^{q}(B)}+2M_{0}
\]

\[
\leq AS_{0}+2M_{0}\coloneqq C(q,M_{0},S_{0}),
\]

where $A=A(q)$ is some constant. 
\end{onehalfspace}
\end{proof}
\begin{onehalfspace}
Since the mass and energy of H-solutions are constant in time and
the entropy is uniformly bounded in time we immediately get 
\end{onehalfspace}
\begin{cor}
\begin{onehalfspace}
\begin{flushleft}
\label{Corollary 3.3 } There are constants $0<c=c(M_{0},E_{0},H_{0},K),C=C(M_{0},S_{0},q)$
with the following property. Let $f$ be a H-solution with $M(f_{0})=M_{0},E(f_{0})=E_{0},H(f_{0})=H_{0},||f(t,\cdot)||_{L^{q}(B)}\leq S_{0}$
for some $\frac{3}{2}<q\leq\infty$. Let $K\Subset\mathbb{R}^{3}$.
Then for all $\xi\in\mathbb{R}^{n}$:$c|\xi|^{2}\leq\overline{a_{ij}}^{f}(t,v)\xi_{i}\xi_{j}\leq C|\xi|^{2}$
for all $(t,v)\in J\times K$. 
\par\end{flushleft}
\end{onehalfspace}

\end{cor}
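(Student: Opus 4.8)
The plan is simply to apply Proposition \ref{Proposition 2.2  } slice by slice in time. Fix $t\in J$ and set $g:=f(t,\cdot)$. I would first check that $g$ satisfies the hypotheses of Proposition \ref{Proposition 2.2  } with constants that do not depend on $t$: by part 1 of Definition \ref{Definition 2.1}, $g\geq0$ a.e. and $g\in L_{2}^{1}(\mathbb{R}^{3})\cap L\log L(\mathbb{R}^{3})$; by the conservation identities in part 3 of Definition \ref{Definition 2.1} (with $\psi=1$ and $\psi=|v|^{2}$) one has $M(g)=M(f_{0})=M_{0}$ and $E(g)=E(f_{0})=E_{0}$, so in particular $M(g)=M_{0}$ and $E(g)\leq E_{0}$; by the entropy inequality in part 3 of Definition \ref{Definition 2.1}, $H(g)\leq H(f_{0})=H_{0}$; and by hypothesis $||g||_{L^{q}(B)}=||f(t,\cdot)||_{L^{q}(B)}\leq S_{0}$ with $\frac{3}{2}<q\leq\infty$. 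Proposition \ref{Proposition 2.2  } then gives, for every $\xi\in\mathbb{R}^{n}$ and every $v\in K$,
\[
c\,|\xi|^{2}\leq\overline{a_{ij}}^{f}(t,v)\,\xi_{i}\xi_{j}\leq C\,|\xi|^{2},
\]
where (keeping the notation of Proposition \ref{Proposition 2.2  }) $c=c(M_{0},E_{0},H_{0},K)$ arises from the coercivity bound of proposition 2.3 in \cite{15} and $C=C(M_{0},S_{0},q)$ from the elementary upper bound established in the proof of Proposition \ref{Proposition 2.2  }; crucially, neither constant involves $t$.

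Since the chosen $t\in J$ was arbitrary and $c,C$ are independent of it, the two-sided bound holds for all $(t,v)\in J\times K$, which is exactly the assertion. There is no real obstacle here: the only thing to keep in mind is that the quantitative constants of Proposition \ref{Proposition 2.2  } depend solely on $M_{0},E_{0},H_{0},S_{0},q$ and $K$, so that the conservation of mass and energy along an H-solution together with the uniform-in-time control of its entropy (both built into Definition \ref{Definition 2.1}) upgrade the one-time-slice estimate to a bound that is uniform over $t\in J$.
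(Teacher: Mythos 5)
Your argument is correct and is exactly the paper's: the corollary is obtained by applying Proposition \ref{Proposition 2.2  } at each time slice, using the conservation of mass and energy and the time-uniform entropy bound from Definition \ref{Definition 2.1} to see that the constants are independent of $t$. Nothing is missing.
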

\begin{onehalfspace}
The strategy of the proof of theorem \ref{Theorem 3.4} will be roughly
as follows.We start by obtaining regularity of the coefficients $\overline{a}_{ij}^{f}$
. Then we localize the solution in order to obtain a linear parabolic
PDE in divergence form. This will allow us to apply classical existence
and uniqueness results from the theory of linear parabolic PDE's.
The local ellipticity of the coefficients (Corollary \ref{Corollary 3.3 })
will be freely and frequently used in the sequel. We start by recalling
the following parabolic version of the celebrated De Giorgi-Nash-Moser
method
\end{onehalfspace}
\begin{thm}
\begin{onehalfspace}
\begin{flushleft}
\textup{(\cite{19}, Theorem 18)} \label{Theorem 3.5} Let $V:B\rightarrow\mathbb{R}^{3}$
be a vector field such that $|V|^{2}\in L^{p}L^{q}(\omega)$, where
$1<p<\infty,1<q<\infty$ satisfy $\frac{2}{p}+\frac{3}{q}<2$. Suppose
$u\in L^{\infty}L^{2}(\omega)$ is a weak subsolution to 
\par\end{flushleft}
\begin{flushleft}
\begin{equation}
\partial_{t}u-\partial_{j}(\overline{a}_{ij}^{f}\partial_{i}u)+\nabla u\cdot V\geq0\label{eq:-1-1}
\end{equation}
 Then there is some $\alpha>0$ such that $u\in H^{\alpha,\frac{\alpha}{2}}(\Omega)$
for all $\Omega\Subset\omega$.
\par\end{flushleft}
\end{onehalfspace}

\end{thm}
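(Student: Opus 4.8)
The plan is to prove this by the parabolic De Giorgi--Nash--Moser method, treating the drift term $\nabla u\cdot V$ as a \emph{subcritical} lower-order perturbation of the divergence-form operator. The only structural inputs one actually uses are the uniform ellipticity and boundedness of $\overline{a}_{ij}^{f}$ on compact subsets of $\omega$ (Corollary \ref{Corollary 3.3 }) and the integrability hypothesis $|V|^{2}\in L^{p}L^{q}(\omega)$ with $\tfrac{2}{p}+\tfrac{3}{q}<2$; the Landau structure plays no role here, so one follows the classical scheme, the only genuinely delicate point being the absorption of the drift. (Since this statement is literally Theorem~18 of \cite{11}, one may of course simply invoke it; the sketch below records why the hypotheses match.)

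First I would fix $\Omega\Subset\omega$ and an intermediate cylinder $\Omega\Subset\Omega'\Subset\omega$, and work in parabolic subcylinders $Q_{r}(t_{0},x_{0})=(t_{0}-r^{2},t_{0}]\times B_{r}(x_{0})\subset\Omega'$. The first step is a Caccioppoli inequality on super-level sets: for a cutoff $\zeta\in C_{c}^{\infty}(Q_{r})$ and a level $k$, one tests \eqref{eq:-1-1} against the admissible nonnegative function $(u-k)_{+}\zeta^{2}$ (justified for subsolutions by the usual Steklov-averaging/truncation argument, using $u\in L^{\infty}L^{2}$) to obtain, with constants depending only on the ellipticity ratio,
\[
\sup_{t}\int (u-k)_{+}^{2}\zeta^{2}\,dx+\iint|\nabla (u-k)_{+}|^{2}\zeta^{2}\,dx\,dt\le C\iint (u-k)_{+}^{2}\big(|\nabla\zeta|^{2}+|\partial_{t}\zeta|\big)+C\iint_{\{u>k\}}|V|\,|\nabla(u-k)_{+}|\,(u-k)_{+}\zeta^{2}.
\]
Young's inequality turns the last term into $\varepsilon\iint|\nabla(u-k)_{+}|^{2}\zeta^{2}$, absorbed on the left, plus $C_{\varepsilon}\iint_{\{u>k\}}|V|^{2}(u-k)_{+}^{2}\zeta^{2}$. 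This residual term is estimated by H\"older in $t$ and $x$ with exponents dual to $(p,q)$ together with the parabolic Sobolev embedding $L^{\infty}_{t}L^{2}_{x}\cap L^{2}_{t}H^{1}_{x}\hookrightarrow L^{2(n+2)/n}_{t,x}=L^{10/3}_{t,x}$ for $n=3$; the point is that $\tfrac{2}{p}+\tfrac{3}{q}<2$ makes $|\{u>k\}\cap Q_{r}|$ enter with a strictly positive power (equivalently, under the parabolic rescaling $Q_{r}\to Q_{1}$ the relevant norm of $|V|^{2}$ gains a factor $r^{2-2/p-3/q}$), which is exactly what renders the energy functional superlinear in the level and hence the De Giorgi iteration closeable.

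With this in hand I would run the standard two-stage argument. Stage one (local boundedness): iterating the Caccioppoli inequality over levels $k_{m}=M(2-2^{-m})$ and radii $r_{m}=\tfrac{r}{2}(1+2^{-m})$ and using the superlinearity just gained, one shows that the energy of $(u-M)_{+}$ on $Q_{r}$ being small relative to an explicit power of $|Q_{r}|$ forces $u\le 2M$ on $Q_{r/2}$, hence $u\in L^{\infty}_{\mathrm{loc}}(\omega)$ from above; applying the same estimate to $-u$ (legitimate when $u$ is a genuine solution of the corresponding parabolic equation, the case relevant to Theorem \ref{Theorem 3.4}) gives $|u|\in L^{\infty}_{\mathrm{loc}}$. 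Stage two (oscillation decay): combining the local bound with De Giorgi's isoperimetric (``second'') lemma on $Q_{r}$ versus $Q_{r/2}$ produces a fixed $\theta\in(0,1)$ and an error scaling like a positive power of $r$ (again thanks to $\tfrac{2}{p}+\tfrac{3}{q}<2$) with $\operatorname{osc}_{Q_{r/2}}u\le\theta\,\operatorname{osc}_{Q_{r}}u+Cr^{\gamma}$; iterating across dyadic cylinders yields $\operatorname{osc}_{Q_{\rho}}u\le C\rho^{\alpha}$ for some $\alpha>0$, i.e. $u\in H^{\alpha,\alpha/2}(\Omega)$.

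The main obstacle, as always in this circle of ideas, is the bookkeeping around the drift: one must track precisely how the positive power of $|\{u>k\}\cap Q_{r}|$ produced by the H\"older/Sobolev estimate propagates through the level-and-radius iteration so that the iteration constants still contract, and one must check that the $r$-dependent error in the oscillation step is summable over dyadic scales. Both reduce entirely to the strict inequality $\tfrac{2}{p}+\tfrac{3}{q}<2$, which here plays the role of a parabolic Ladyzhenskaya--Prodi--Serrin exponent; everything else (the Steklov justification of the test-function computation, the packaging of the oscillation decay into $H^{\alpha,\alpha/2}(\Omega)$) is routine.
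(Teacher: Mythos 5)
The paper gives no proof of this statement: it is imported verbatim as Theorem~18 of \cite{11} (Vasseur's De Giorgi lecture notes), and your sketch is a correct outline of exactly the argument that reference runs (level-set Caccioppoli estimate, absorption of the drift via Young plus the parabolic Sobolev/H\"older bookkeeping made subcritical by $\tfrac{2}{p}+\tfrac{3}{q}<2$, De Giorgi iteration for the $L^{\infty}$ bound, isoperimetric lemma for oscillation decay), so there is nothing to compare beyond noting that you may, as you say, simply cite the result. One remark worth keeping: your parenthetical observation that the oscillation-decay stage requires applying the estimate to $-u$ as well --- i.e.\ that H\"older continuity cannot follow from the subsolution inequality alone --- is correct and actually points to an imprecision in the statement as transcribed here; in the application (step 1 of Lemma \ref{Lemma 3.6}) $f$ is a genuine weak solution of the drift-diffusion equation, so the two-sided hypothesis is available and the conclusion is legitimate.
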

\begin{onehalfspace}
We will also heavily rely on the following Calderon Zygmund type theorem 
\end{onehalfspace}
\begin{thm}
\begin{onehalfspace}
\begin{flushleft}
\textup{(\cite{5}, Theorem 4.12)} \label{Theorem 3.6 } Suppose $\nu\in C^{\infty}(\mathbb{R}^{3}\setminus\{0\})$
has the form $\nu(y)=\mu(\frac{y}{|y|})$ where $\mu\in L^{q}(S^{2})$
for some $q>1$ is an even function such that $\underset{S^{2}}{\int}\mu(z)d\sigma(z)=0$.
Then for each $1<p<\infty$ the operator $T:L^{p}(\mathbb{R}^{3})\rightarrow L^{p}(\mathbb{R}^{3})$
defined by $Tf=(\frac{\nu(y)}{|y|^{3}}\ast f(y))(x)$ is bounded. 
\par\end{flushleft}
\end{onehalfspace}

\end{thm}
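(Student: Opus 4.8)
The plan is to recognize $T$ as a classical Calderón--Zygmund singular integral and run the standard machinery, exploiting the smoothness hypothesis $\nu\in C^{\infty}(\mathbb{R}^{3}\setminus\{0\})$ — which forces the angular profile $\mu=\nu|_{S^{2}}$ to be a smooth, in particular bounded and Lipschitz, function on $S^{2}$ — together with the two structural hypotheses that $\mu$ be even and have vanishing mean. First I would make sense of the operator: since $|y|^{-3}$ is not locally integrable on $\mathbb{R}^{3}$, the convolution must be read in the principal value sense, $Tf(x)=\lim_{\varepsilon\to0}\int_{|x-y|>\varepsilon}\frac{\nu(x-y)}{|x-y|^{3}}f(y)\,dy$, and the mean-zero condition $\int_{S^{2}}\mu\,d\sigma=0$ is precisely what guarantees that the truncated kernels $K_{\varepsilon}(y)=|y|^{-3}\nu(y)\mathbf{1}_{|y|>\varepsilon}$ converge, as $\varepsilon\to0$, to a tempered distribution $K=\mathrm{p.v.}\,|y|^{-3}\nu(y)$ (one pairs $K_\varepsilon$ with a test function, splits into $|y|<1$, where mean zero yields cancellation, and $|y|>1$, where the kernel is integrable). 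Because $\nu$ is homogeneous of degree $0$, the kernel $K(y)=\mu(y/|y|)|y|^{-3}$ is homogeneous of degree $-3$, and since $\mu\in C^{1}(S^{2})$ it obeys the pointwise bounds $|K(y)|\lesssim|y|^{-3}$ and $|\nabla K(y)|\lesssim|y|^{-4}$; in particular $K$ is a Calderón--Zygmund kernel and satisfies Hörmander's condition $\int_{|y|>2|z|}|K(y-z)-K(y)|\,dy\lesssim1$ uniformly in $z$.

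The core of the argument is the $L^{2}$ bound. I would compute the Fourier transform of the homogeneous distribution $K$: by the classical formula for the Fourier transform of a principal-value kernel homogeneous of degree $-n$ with mean-zero angular part, $\widehat{K}(\xi)=m(\xi/|\xi|)$ is homogeneous of degree $0$ and, up to a fixed dimensional constant $c$,
\[
m(\xi')=c\int_{S^{2}}\mu(\theta)\Big(\log\tfrac{1}{|\theta\cdot\xi'|}+i\tfrac{\pi}{2}\,\mathrm{sgn}(\theta\cdot\xi')\Big)\,d\sigma(\theta).
\]
Here the evenness of $\mu$ enters decisively: the factor $\mathrm{sgn}(\theta\cdot\xi')$ is odd in $\theta$, so the imaginary term integrates to zero and $m(\xi')=c\int_{S^{2}}\mu(\theta)\log\frac{1}{|\theta\cdot\xi'|}\,d\sigma(\theta)$. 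The function $\theta\mapsto\log\frac{1}{|\theta\cdot\xi'|}$ has only a logarithmic singularity along the great circle $\{\theta\perp\xi'\}$, hence lies in $L^{q'}(S^{2})$ for the exponent $q'$ conjugate to $q$, with norm independent of $\xi'$; Hölder's inequality then gives $\|m\|_{L^{\infty}}\le C_{q}\|\mu\|_{L^{q}(S^{2})}<\infty$ (this is exactly where the hypothesis $q>1$ is used). By Plancherel, $T$ extends to a bounded operator on $L^{2}(\mathbb{R}^{3})$.

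With $L^{2}$ boundedness and Hörmander's condition established, the Calderón--Zygmund theorem yields that $T$ is of weak type $(1,1)$; Marcinkiewicz interpolation between weak-$(1,1)$ and strong $(2,2)$ then gives boundedness on $L^{p}(\mathbb{R}^{3})$ for every $1<p\le2$. For the range $2\le p<\infty$ I would invoke duality: the formal adjoint $T^{\ast}$ is the singular integral with kernel $\overline{K(-y)}=K(-y)=\mu(-y/|y|)|y|^{-3}=\mu(y/|y|)|y|^{-3}=K(y)$, again because $\mu$ is real and even, so $T^{\ast}=T$ satisfies the same hypotheses and is bounded on $L^{p}$ for $1<p\le2$; this is equivalent to $T$ being bounded on $L^{p'}$ for $2\le p'<\infty$. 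Combining the two ranges gives the assertion for all $1<p<\infty$.

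I expect the main obstacle to be making the identity for $\widehat{K}$ fully rigorous — in particular justifying the limit defining the principal-value tempered distribution and the explicit evaluation of its Fourier transform via the dilation/homogeneity symmetry — since everything downstream is routine once $\|\widehat{K}\|_{L^{\infty}}<\infty$ is in hand. As a fallback that sidesteps the explicit multiplier formula, I would expand the smooth even profile in spherical harmonics $\mu=\sum_{k\ge2,\ k\text{ even}}Y_{k}$, use the Bochner--Hecke identity $\widehat{Y_{k}(y/|y|)|y|^{-3}}=i^{k}\gamma_{k}\,Y_{k}(\xi/|\xi|)$ with $|\gamma_{k}|\to0$, and sum the resulting operator series using the rapid decay of $\|Y_{k}\|_{L^{\infty}(S^{2})}$ for smooth $\mu$ together with polynomial-in-$k$ bounds for the $L^{p}$ norms of the individual pieces.
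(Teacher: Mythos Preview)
The paper does not prove this theorem; it is quoted verbatim from Duoandikoetxea's \textit{Fourier Analysis} (reference \cite{14}, Theorem 4.12) and used as a black box in the proof of Lemma~\ref{Lemma 3.6}. Your proposal is a correct proof along the classical Calder\'on--Zygmund lines: compute the Fourier multiplier of the principal-value kernel to obtain $L^{2}$ boundedness, verify H\"ormander's condition to get weak type $(1,1)$, then interpolate and dualize. This is essentially the argument one finds in the cited textbook, with one small difference worth noting: the general result in \cite{14} treats the case where the angular part $\mu$ is merely in $L^{q}(S^{2})$ (no smoothness), and for that the H\"ormander condition is not directly available, so the extension from $L^{2}$ to $L^{p}$ proceeds instead via the method of rotations (writing the kernel as a superposition of directional Hilbert transforms). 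You have exploited the extra hypothesis $\nu\in C^{\infty}(\mathbb{R}^{3}\setminus\{0\})$ present in the paper's statement to obtain the gradient bound $|\nabla K(y)|\lesssim|y|^{-4}$ and hence H\"ormander's condition pointwise, which is a perfectly legitimate shortcut here.
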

\begin{onehalfspace}
We shall first verify that the conditions imposed in theorem \ref{Theorem 3.6 }
are indeed verified for the second derivatives of $a_{ij}$. This
verification is based on elementary (yet somewhat tedious) calculations,
and is the content of the following 
\end{onehalfspace}
\begin{lem}
\begin{onehalfspace}
\begin{flushleft}
\label{Lemma 3.6-1} For each $1\leq k,l\leq3$ it holds that $\partial_{kl}a_{ij}(y)=\frac{\nu_{kl}(y)}{|y|^{3}}$
where $\nu_{kl}$ is as in theorem \ref{Theorem 3.6 }. 
\par\end{flushleft}
\end{onehalfspace}

\end{lem}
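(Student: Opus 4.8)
The claim is that the pure second derivatives $\partial_{kl}a_{ij}$ are homogeneous of degree $-3$ and have the kernel form required by Theorem~\ref{Theorem 3.6 }: namely $\partial_{kl}a_{ij}(y)=|y|^{-3}\mu_{kl}(y/|y|)$ with $\mu_{kl}$ even, smooth on $S^{2}$, and of vanishing mean over $S^{2}$. The plan is to compute the derivatives directly from the explicit formula $a_{ij}(y)=|y|^{-1}\delta_{ij}-y_{i}y_{j}|y|^{-3}$, and then verify the three structural properties one at a time.

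First I would establish homogeneity, which is essentially free: $a_{ij}$ is homogeneous of degree $-1$ (clear from the formula, since both $|y|^{-1}\delta_{ij}$ and $y_iy_j|y|^{-3}$ scale that way), hence $\partial_{kl}a_{ij}$ is homogeneous of degree $-3$ on $\mathbb{R}^{3}\setminus\{0\}$, and therefore can be written as $|y|^{-3}$ times a function of $y/|y|$ alone; smoothness of that angular function on $S^{2}$ is inherited from smoothness of $a_{ij}$ away from the origin. Evenness is equally cheap: $a_{ij}$ is even, so each first derivative $\partial_{k}a_{ij}$ is odd, and each second derivative $\partial_{kl}a_{ij}$ is even; restricted to the sphere this says $\mu_{kl}$ is even. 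The only property requiring actual work is the vanishing-mean condition $\int_{S^{2}}\mu_{kl}\,d\sigma=0$ for every choice of $i,j,k,l$.

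For the mean-zero condition I would use the classical fact that a function homogeneous of degree $-3$ on $\mathbb{R}^{3}\setminus\{0\}$ has vanishing spherical average precisely when it is (up to the $|y|^{-3}$ factor) compatible with being a distributional derivative structure — concretely, I would exploit that $\partial_{kl}a_{ij}=\partial_{l}(\partial_{k}a_{ij})$ where $\partial_{k}a_{ij}$ is homogeneous of degree $-2$, and that the spherical integral of $\partial_{l}g$ for $g$ homogeneous of degree $-2$ vanishes by the divergence theorem on a spherical shell (the radial integral of a degree $-3$ homogeneous function over an annulus $\{1\le|y|\le R\}$ is a nonzero constant times $\int_{S^2}\mu_{kl}$, while writing the same integral as $\int\partial_l g$ and applying the divergence theorem produces boundary terms over $|y|=1$ and $|y|=R$ whose difference, using homogeneity of $g$, is forced to be zero unless... ) — so in fact the cleanest route is: $\int_{S^2}\mu_{kl}\,d\sigma$ equals a dimensional constant times $\int_{1\le |y|\le 2}\partial_{kl}a_{ij}\,dy$, and this last integral is an iterated boundary integral $\int_{|y|=2}\nu_l\,\partial_k a_{ij} - \int_{|y|=1}\nu_l\,\partial_k a_{ij}$, which vanishes because $\partial_k a_{ij}$ is homogeneous of degree $-2$ and the surface measure scales like $R^{2}$, making the two boundary contributions cancel. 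Alternatively, and perhaps more transparently for a referee, one can just directly expand $\partial_{kl}a_{ij}$ into a sum of monomials in $y$ over powers of $|y|$ (terms like $\delta_{ij}\delta_{kl}|y|^{-3}$, $y_iy_j\delta_{kl}|y|^{-5}$, $y_iy_ky_jy_l|y|^{-7}$, etc.) and integrate each monomial over $S^{2}$ using the standard formulas $\int_{S^2} 1 = 4\pi$, $\int_{S^2}\theta_i\theta_j = \tfrac{4\pi}{3}\delta_{ij}$, $\int_{S^2}\theta_i\theta_j\theta_k\theta_l = \tfrac{4\pi}{15}(\delta_{ij}\delta_{kl}+\delta_{ik}\delta_{jl}+\delta_{il}\delta_{jk})$, and check the linear combination telescopes to zero.

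The main obstacle is purely bookkeeping: correctly carrying out the two differentiations of $a_{ij}$ and tracking the roughly half-dozen resulting tensor monomials with the right combinatorial coefficients and the right powers of $|y|$, then confirming the cancellation in the spherical average. There is no conceptual difficulty — homogeneity gives the kernel form and the evenness for free, and the mean-zero property is either a one-line divergence-theorem argument or a short computation with the standard sphere-integration identities — but the calculation must be done carefully, which is presumably why the author flags it as ``elementary yet somewhat tedious.''
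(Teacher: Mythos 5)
Your proposal is correct, and for the key step it takes a genuinely different route from the paper. The paper proceeds by brute force: it differentiates $a_{ij}(z)=\delta_{ij}|z|^{-1}-z_iz_j|z|^{-3}$ twice, writes the result explicitly as $|z|^{-3}$ times a polynomial in the components of $z/|z|$ (about ten tensor monomials), reads off evenness and $L^q(S^2)$ membership from that explicit formula, and then verifies $\int_{S^2}\mu_{kl}\,d\sigma=0$ by plugging in the standard sphere-integration identities $\int_{S^2}\theta_i\theta_j\,d\sigma=\tfrac{4\pi}{3}\delta_{ij}$ and $\int_{S^2}\theta_i\theta_j\theta_k\theta_l\,d\sigma=\tfrac{4\pi}{15}(\delta_{ij}\delta_{kl}+\delta_{ik}\delta_{jl}+\delta_{il}\delta_{jk})$ (your ``alternative, more transparent'' route is exactly the paper's proof). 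Your primary route replaces all of this with structural observations: homogeneity of degree $-1$ of $a_{ij}$ gives the form $|y|^{-3}\mu_{kl}(y/|y|)$ with $\mu_{kl}$ smooth on $S^2$, parity gives evenness, and the mean-zero condition follows from the divergence theorem on the annulus $\{1\le|y|\le R\}$: the volume integral of $\partial_{kl}a_{ij}$ equals $(\log R)\int_{S^2}\mu_{kl}\,d\sigma$ by homogeneity, while writing it as $\int\partial_l(\partial_k a_{ij})$ and passing to the boundary makes the two spherical contributions cancel exactly (degree $-2$ integrand against surface measure scaling like $R^{2}$), forcing $\int_{S^2}\mu_{kl}\,d\sigma=0$. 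This is the standard argument showing that second derivatives of a degree $-1$ kernel are Calder\'on--Zygmund kernels; it buys you a computation-free proof and makes clear that nothing about the specific form of $a_{ij}$ beyond its homogeneity and evenness is being used. What the paper's explicit computation buys in exchange is the concrete formula for $\mu_{kl}$, which is harmless to have but is not actually needed anywhere else in the paper. The only caveat is that your sketch, as a sketch, does not carry out either computation; but every step of the structural argument is sound as stated, so there is no gap.
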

\begin{onehalfspace}
\textit{Proof. }We differentiate 

\[
\partial_{k}a_{ij}(z)=\partial_{k}(\frac{\delta_{ij}}{|z|}+\frac{z_{i}z_{j}}{|z|^{3}})=-\frac{\delta_{ij}z_{k}}{|z|^{3}}+\frac{\delta_{ik}z_{j}+\delta_{jk}z_{i}}{|z|^{3}}-\frac{3z_{i}z_{j}z_{k}}{|z|^{5}}=\frac{\delta_{ik}z_{j}+\delta_{jk}z_{i}-\delta_{ij}z_{k}}{|z|^{3}}-\frac{3z_{i}z_{j}z_{k}}{|z|^{5}}
\]
. 

\[
\partial_{l}(\frac{\delta_{ik}z_{j}+\delta_{jk}z_{i}-\delta_{ij}z_{k}}{|z|^{3}})=\frac{\delta_{ik}\delta_{lj}+\delta_{jk}\delta_{il}-\delta_{ij}\delta_{kl}}{|z|^{3}}-\frac{3z_{l}(\delta_{ik}z_{j}+\delta_{jk}z_{i}-\delta_{ij}z_{k})}{|z|^{5}}
\]

\[
\partial_{l}(\frac{3z_{i}z_{j}z_{k}}{|z|^{5}})=\frac{3\delta_{il}z_{j}z_{k}+3\delta_{lj}z_{i}z_{k}+3\delta_{kl}z_{j}z_{i}}{|z|^{5}}-\frac{15z_{i}z_{j}z_{k}z_{l}}{|z|^{7}}.
\]

Hence 

\[
\partial_{kl}a_{ij}(z)=\partial_{kl}(\frac{\delta_{ij}}{|z|}+\frac{z_{i}z_{j}}{|z|^{2}})=
\]

\[
\frac{\delta_{ik}\delta_{lj}+\delta_{jk}\delta_{il}-\delta_{ij}\delta_{kl}}{|z|^{3}}-\frac{3z_{l}(\delta_{ik}z_{j}+\delta_{jk}z_{i}-\delta_{ij}z_{k})}{|z|^{5}}-\frac{3\delta_{il}z_{j}z_{k}+3\delta_{lj}z_{i}z_{k}+3\delta_{kl}z_{j}z_{i}}{|z|^{5}}+\frac{15z_{i}z_{j}z_{k}z_{l}}{|z|^{7}}=
\]

\[
=\frac{1}{|z|^{3}}(\delta_{ik}\delta_{lj}+\delta_{jk}\delta_{il}-\delta_{ij}\delta_{kl}-\frac{3z_{l}(\delta_{ik}z_{j}+\delta_{jk}z_{i}-\delta_{ij}z_{k})+3\delta_{il}z_{j}z_{k}+3\delta_{lj}z_{i}z_{k}+3\delta_{kl}z_{j}z_{i}}{|z|^{2}}+\frac{15z_{i}z_{j}z_{k}z_{l}}{|z|^{4}}).
\]

Denote by $P_{i},1\leq i\leq3$ the projection on the $i$-th coordinate.
With this notation we recognize the following identity 

\[
\partial_{kl}(a_{ij})=\frac{1}{|z|^{3}}(\delta_{ik}\delta_{lj}+\delta_{jk}\delta_{il}-\delta_{ij}\delta_{kl}-3\delta_{ik}P_{l}(\frac{z}{|z|})P_{j}(\frac{z}{|z|})-3\delta_{jk}P_{i}(\frac{z}{|z|})P_{l}(\frac{z}{|z|})+3\delta_{ij}P_{l}(\frac{z}{|z|})P_{k}(\frac{z}{|z|})
\]

\[
-3\delta_{il}P_{j}(\frac{z}{|z|})P_{k}(\frac{z}{|z|})-3\delta_{lj}P_{i}(\frac{z}{|z|})P_{k}(\frac{z}{|z|})-3\delta_{kl}P_{j}(\frac{z}{|z|})P_{i}(\frac{z}{|z|})+15P_{i}(\frac{z}{|z|})P_{j}(\frac{z}{|z|})P_{l}(\frac{z}{|z|})P_{k}(\frac{z}{|z|}))\coloneqq\frac{\mu_{kl}(\frac{z}{|z|})}{|z|^{3}}.
\]

It is apparent that $\mu_{kl}$ is even and $\mu_{kl}\in L^{q}(S^{2})$
for $q>1$. In addition, the following calculations are an elementary
exercise in calculus (see e.g. \cite{7})

\[
\underset{S^{2}}{\int}\delta_{ik}\delta_{lj}d\sigma(z)=4\pi\delta_{ik}\delta_{lj}.
\]

\[
\underset{S^{2}}{\int}P_{i}(z)P_{j}(z)d\sigma(z)=\frac{4\pi}{3}\delta_{ij}.
\]

\[
\underset{S^{2}}{\int}P_{i}(z)P_{j}(z)P_{l}(z)P_{k}(z)d\sigma(z)=-\frac{4\pi}{15}(\delta_{ik}\delta_{lj}+\delta_{jk}\delta_{il}-\delta_{ij}\delta_{kl}-\delta_{ik}\delta_{lj}-\delta_{jk}\delta_{il}+\delta_{ij}\delta_{lk}-\delta_{il}\delta_{jk}-\delta_{lj}\delta_{ik}-\delta_{kl}\delta_{ij}).
\]

With the aid of the above identities it is readily checked that $\underset{S^{2}}{\int}\mu(z)d\sigma(z)=0$. 
\end{onehalfspace}
\begin{onehalfspace}
\begin{flushright}
$\square$
\par\end{flushright}
\end{onehalfspace}
\begin{lem}
\begin{onehalfspace}
\begin{flushleft}
\label{Lemma 3.6} Let $f$ be a H-solution with $||f(t,\cdot)||_{L^{q}(B)}\leq S_{0}$
where $3<q\leq\infty$. Then:
\par\end{flushleft}
\begin{flushleft}
1. There is some $\alpha>0$ such that $f\in H^{\alpha,\frac{\alpha}{2}}(\Omega)$
for all $\Omega\Subset\omega$. 
\par\end{flushleft}
\begin{flushleft}
2. (i) $a_{ij}\in C^{0}(\omega)$.
\par\end{flushleft}
\begin{flushleft}
2. (ii) For each $t\in J$ the function $a_{ij}(t,\cdot)$ is differentiable
on $B$ and $\partial_{k}a_{ij}\in C^{0}(\omega)$. 
\par\end{flushleft}
\end{onehalfspace}
\end{lem}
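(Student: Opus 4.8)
I would treat the three assertions separately, using the uniform-in-time bounds $M(f(t,\cdot))=M_{0}$, $E(f(t,\cdot))=E_{0}$, $H(f(t,\cdot))\le H_{0}$, $\|f(t,\cdot)\|_{L^{q}(B)}\le S_{0}$, the local ellipticity of $\overline{a}_{ij}^{f}$ from Corollary~\ref{Corollary 3.3 }, the pointwise bounds $|a_{ij}(z)|\le 2|z|^{-1}$, $|b(z)|=2|z|^{-2}$, $|\partial_{k}a_{ij}(z)|\le C|z|^{-2}$, and the elementary fact that, with $q'=q/(q-1)$, one has $q>3\iff q'<\tfrac32$ (whereas $q>\tfrac32\iff q'<3$), so that both $|z|^{-1}$ and, crucially, $|z|^{-2}$ belong to $L^{q'}_{\mathrm{loc}}(\mathbb{R}^{3})$ with $L^{q'}(\{|z|<\delta\})$-norms tending to $0$ as $\delta\to0$. (Parts 2(i)--(ii) of course concern the coefficient $\overline{a}_{ij}^{f}$, which depends on $t$ through $f(t,\cdot)$.)

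\emph{Part 1.} The plan is to put the equation into the form demanded by Theorem~\ref{Theorem 3.5}. Using $a_{ij}=a_{ji}$ one has $\sum_{i}\partial_{i}a_{ij}=b_{j}$, and the classical identity $\sum_{i}\partial_{i}b_{i}=-8\pi\delta_{0}$ on $\mathbb{R}^{3}$ gives $\mathrm{div}_{v}\,\overline{b}^{f}=-8\pi f$. Since $\overline{b}^{f}\in L^{\infty}_{\mathrm{loc}}(\omega)$ by (c) below and $\nabla_{v}f\in L^{2}(\omega)$ by Theorem~\ref{Theorem 2.3 } and the remark following it (as $f\in L^{2}(\omega)$), a routine distributional computation turns the weak formulation of Remark~\ref{Remark 2.4} into the weak form of
\[
\partial_{t}f-\partial_{j}\bigl(\overline{a}_{ij}^{f}\,\partial_{i}f\bigr)+\overline{b}^{f}\cdot\nabla_{v}f=8\pi f^{2}\ge 0,
\]
so $f$ is a weak subsolution in the sense of Theorem~\ref{Theorem 3.5} with drift $V=\overline{b}^{f}$. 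The remaining hypotheses are: (a) $f\in L^{\infty}L^{2}(\omega)$, immediate from $q>3>2$ and $|B|<\infty$; (b) uniform ellipticity of $\overline{a}_{ij}^{f}$, which is Corollary~\ref{Corollary 3.3 }; (c) $|V|^{2}=|\overline{b}^{f}|^{2}\in L^{p}L^{q}(\omega)$ with $\tfrac2p+\tfrac3q<2$. For (c), split $\overline{b}^{f}(t,v)=\int_{|z|<\delta}+\int_{|z|\ge\delta}$ with $\delta$ small depending on $\mathrm{dist}(\Omega,\partial\omega)$: the far part is $\le 2\delta^{-2}M_{0}$, and by Hölder the near part is $\le 2S_{0}\,\| |z|^{-2}\|_{L^{q'}(\{|z|<\delta\})}<\infty$ precisely because $q>3$; hence $\overline{b}^{f}\in L^{\infty}_{\mathrm{loc}}(\omega)$ and $|V|^{2}\in L^{\infty}_{\mathrm{loc}}(\omega)\subset L^{p}L^{q}(\omega)$ for every finite $p,q$ (e.g.\ $p=q=10$). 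Theorem~\ref{Theorem 3.5} then yields $f\in H^{\alpha,\alpha/2}(\Omega)$ for all $\Omega\Subset\omega$.

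\emph{Part 2(i).} Fix $B'\Subset B$; I prove continuity of $\overline{a}_{ij}^{f}$ in $v$ on $B'$ uniformly in $t$, continuity in $t$ for each fixed $v_{0}\in B'$, and then conclude by an $\varepsilon/2$ argument, $B'$ being arbitrary. For $v,v'\in B'$, split the defining integral over $\{\min(|v-w|,|v'-w|)\ge\delta\}$, where $a_{ij}$ is Lipschitz with constant $O(\delta^{-2})$ and one pairs with $\|f(t,\cdot)\|_{L^{1}}=M_{0}$, and over its complement, where $|a_{ij}(z)|\le 2|z|^{-1}$ pairs by Hölder with $\|f(t,\cdot)\|_{L^{q}(B)}\le S_{0}$ to give a bound $\le CS_{0}\delta^{3/q'-1}\to0$; this is the spatial continuity, uniform in $t$. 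For continuity in $t$ at $v_{0}$, write $a_{ij}(v_{0}-\cdot)=\psi_{\delta}+\chi_{\delta}$ with $\psi_{\delta}$ supported in $B(v_{0},\delta)$ and $\chi_{\delta}:=a_{ij}(v_{0}-\cdot)(1-\eta_{\delta})\in W^{2,\infty}(\mathbb{R}^{3})$ for a smooth cutoff $\eta_{\delta}$ equal to $1$ near $v_{0}$ (away from $v_{0}$, $a_{ij}$ and its first two derivatives are bounded and decay at infinity). Then $\sup_{t}\,|\int\psi_{\delta}f(t,\cdot)|\le CS_{0}\delta^{3/q'-1}$, while $t\mapsto\int\chi_{\delta}f(t,\cdot)$ is Hölder-$\tfrac12$ by Theorem~\ref{Theorem 2.2}; hence $\overline{a}_{ij}^{f}(\cdot,v_{0})$ is a uniform limit (as $\delta\to0$) of continuous functions on $J$, so continuous. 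Joint continuity on $J\times B'$ follows, so $\overline{a}_{ij}^{f}\in C^{0}(\omega)$.

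\emph{Part 2(ii).} First, for each $t$ and $v\in B$ the integral $g_{k}(t,v):=\int(\partial_{k}a_{ij})(v-w)f(t,w)\,dw$ converges absolutely: the near part $\int_{|z|<\delta}|z|^{-2}f$ is finite by Hölder since $|z|^{-2}\in L^{q'}_{\mathrm{loc}}$ (here $q>3$ is essential), and the far part is $\le CM_{0}$. Next, $\overline{a}_{ij}^{f}(t,\cdot)$ is differentiable with $\partial_{v_{k}}\overline{a}_{ij}^{f}=g_{k}$: the difference quotient $h^{-1}[a_{ij}(v+he_{k}-w)-a_{ij}(v-w)]$ is dominated on $\{|v-w|\ge 2|h|\}$ by $\sup_{|s|\le|h|}|\partial_{k}a_{ij}(v+se_{k}-w)|\le C|v-w|^{-2}$ (dominated convergence there), and the contribution of $\{|v-w|<2|h|\}$, bounded via $|a_{ij}(z)|\le 2|z|^{-1}$ and Hölder by $Ch^{-1}S_{0}|h|^{3/q'-1}=CS_{0}|h|^{3/q'-2}$, tends to $0$ because $q>3\Rightarrow 3/q'-2>0$. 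Finally $g_{k}=\partial_{k}\overline{a}_{ij}^{f}\in C^{0}(\omega)$ by the argument of Part 2(i) applied verbatim with $a_{ij}$ replaced by $\partial_{k}a_{ij}$: this kernel has a $|z|^{-2}$ singularity whose $L^{q'}(\{|z|<\delta\})$-norm still tends to $0$ (since $q>3$), and away from the origin $\partial_{k}a_{ij}$ together with its first two derivatives is bounded, so the same split into a uniformly small singular part and a $W^{2,\infty}$ far part applies, with Theorem~\ref{Theorem 2.2} controlling the $t$-variable.

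\emph{Where the difficulty lies.} The delicate step is Part 1: casting the equation in exactly the divergence-plus-drift form of Theorem~\ref{Theorem 3.5} (keeping track of $\mathrm{div}_{v}\,\overline{b}^{f}=-8\pi f$ and of the sign of the resulting source, so that $f$ genuinely qualifies as a subsolution) and, above all, establishing $\overline{b}^{f}\in L^{\infty}_{\mathrm{loc}}(\omega)$. This is where the hypothesis $q>3$ --- rather than merely $q>\tfrac32$ --- first becomes indispensable, since the singularity $|b(z)|\sim|z|^{-2}$ belongs to $L^{q'}_{\mathrm{loc}}$ exactly when $q'<\tfrac32$; the very same $|z|^{-2}$-against-$L^{q}$ estimate, in the sharper form needed to differentiate under the integral sign, reappears in Part 2(ii). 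By contrast the continuity arguments of 2(i)--2(ii) are routine once one has both the uniform $L^{q}$ bound (for the singular cores of the kernels) and Villani's weak $C^{1/2}$-in-time estimate, Theorem~\ref{Theorem 2.2} (for the smooth far parts).
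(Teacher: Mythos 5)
Your Part 1 follows the paper's argument essentially verbatim: establish $\overline{b}^{f}\in L^{\infty}_{\mathrm{loc}}(\omega)$ by the near/far splitting with H\"older (exactly where $q>3$ enters), rewrite the weak formulation of Remark \ref{Remark 2.4} in divergence-plus-drift form, use that $-\mathrm{div}_{v}\overline{b}^{f}$ is positively proportional to $f$ to see that $f$ is a subsolution, and invoke Theorem \ref{Theorem 3.5}. Parts 2(i)--(ii), however, you prove by a genuinely different and more elementary route. The paper establishes the stronger statement $\overline{a_{ij}}^{f}(t,\cdot)\in W^{2,q}(B')$ for each $t$: since the second derivatives $\partial_{kl}a_{ij}\sim|z|^{-3}$ are not locally integrable, it must verify (Lemma \ref{Lemma 3.6-1}) that they are Calder\'on--Zygmund kernels, apply Theorem \ref{Theorem 3.6 } to get $\partial_{kl}\overline{a_{ij}}^{f}(t,\cdot)\in L^{q}_{\mathrm{loc}}$, and then conclude $C^{1}$ regularity in $v$ by Sobolev embedding ($q>3$); continuity in $t$ is handled, as in your write-up, by splitting into a uniformly small singular core and a $W^{2,\infty}$ far part to which Theorem \ref{Theorem 2.2} applies. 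You instead obtain continuity of $\overline{a_{ij}}^{f}$ and differentiability with $\partial_{k}\overline{a_{ij}}^{f}=(\partial_{k}a_{ij})\ast f$ directly from difference quotients and dominated convergence, exploiting only that $|z|^{-1}$ and $|z|^{-2}$ lie in $L^{q'}_{\mathrm{loc}}$ with small norm near the origin when $q>3$; no singular-integral theory is needed because only first derivatives of $a_{ij}$ are required. This buys a shorter, self-contained proof of the lemma as stated (and makes Lemma \ref{Lemma 3.6-1} and Theorem \ref{Theorem 3.6 } dispensable for it), at the cost of not producing the $L^{q}$ bound on $\partial_{kl}\overline{a_{ij}}^{f}$ that the paper uses to place $\overline{b}^{f}(t,\cdot)$ in $W^{1}_{2}(B')$ and thereby justify the integrations by parts in Part 1 and in the proof of Theorem \ref{Theorem 3.4}. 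Your substitute --- reading $\mathrm{div}_{v}\overline{b}^{f}=-8\pi f\in L^{2}_{\mathrm{loc}}$ off the distributional identity $\sum_{i}\partial_{i}b_{i}=-8\pi\delta_{0}$ --- does supply the divergence needed there, so the argument still closes; it is just worth flagging that this replacement must be made consciously wherever the paper invokes $\overline{b}^{f}(t,\cdot)\in W^{1}_{2}$.
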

\begin{proof}
\begin{onehalfspace}
We assume with out loss of generality $q<\infty$. We start by showing
$\overline{b_{i}}^{f}\in L_{\mathrm{loc}}^{\infty}(\omega)$. 

Let $B'\Subset B$ be a ball and $J'\Subset J$. Pick $B''\Subset B$
to be a ball such that $B'\Subset B''$ and any $2\epsilon-$neighborhood
of $B'$ is $\Subset B''$, for some sufficiently small $\epsilon>0$.
Pick $\chi\in C_{0}^{\infty}(\mathbb{\mathbb{R}}^{3})$ with $\chi(z)\equiv1$
on $|z|\leq\epsilon$, $\chi\equiv0$ on $|z|>2\epsilon$ . Then for
all $(t,v)\in J'\times B'$ we have 
\[
|\overline{b_{i}}^{f}(t,v)|\lesssim\underset{\mathbb{R}^{3}}{\int}\frac{1}{|v-w|^{2}}f(t,w)dw=\underset{\mathbb{R}^{3}}{\int}\frac{\chi(v-w)}{|v-w|^{2}}f(t,w)dw+\underset{\mathbb{R}^{3}}{\int}\frac{1-\chi(v-w)}{|v-w|^{2}}f(t,w)dw=
\]

\[
\underset{|v-w|\leq2\epsilon}{\int}\frac{\chi(v-w)}{|v-w|^{2}}f(t,w)dw+\underset{\mathbb{R}^{3}}{\int}\frac{1-\chi(v-w)}{|v-w|^{2}}f(t,w)dw\leq\underset{B''}{\int}\frac{\chi(v-w)}{|v-w|^{2}}f(t,w)dw+||\frac{1-\chi(w)}{|w|^{2}}||_{\infty}||f(t,\cdot)||_{L^{1}(\mathbb{R}^{3})}
\]

\[
\lesssim||f(t,\cdot)||_{L^{q}(B'')}+||f(t,\cdot)||_{L^{1}(\mathbb{R}^{3})}.
\]

where the last inequality follows from H\"older nequality and the
assumption $q>3$. Taking the double supremum on both sides gives
$\overline{b_{i}}^{f}\in L_{\mathrm{loc}}^{\infty}(\omega)$. 

1. Now we show that $f$ is locally H\"older continuous in $\omega$.
To this aim we wish to show that $f$ (which by assumption is $L^{\infty}L^{q}(\omega),q>3$)
is a subsolution to an inequality of the form (\ref{eq:-1-1}). Let
$\omega'=(0,S')\times B'\Subset\omega$. Corollary \ref{Corollary 2.3}
and Remark \ref{Remark 2.4} imply in particular the following equation
for all $0\leq\varphi\in C_{0}^{2}(\omega')$
\begin{equation}
-\underset{\omega'}{\int}f\partial_{t}\varphi+\underset{\omega'}{\int}(\overline{a}_{ij}^{f}\partial_{i}f)(\partial_{j}\varphi)=\underset{\omega'}{\int}f\overline{b}_{i}^{f}\partial_{i}\varphi(t,v).\label{eq:-5}
\end{equation}

That $f\in W_{2}^{1,0}(\omega')$ is a particular byproduct of theorem
\ref{Theorem 2.3 }. Furthermore, in step 2.i we will prove (independently)
that $\partial_{kl}\overline{a_{ij}}^{f}(t,\cdot)\in L_{\mathrm{loc}}^{q}(B)$
for each fixed $t$, so that $\overline{b}_{i}^{f}(t,\cdot)\in W_{2}^{1}(B')$
for each fixed $t$. 

Therefore we may integrate by parts the RHS of equation (\ref{eq:-5})
and arrive at the equation 

\[
-\underset{\omega'}{\int}f\partial_{t}\varphi+\underset{\omega'}{\int}(\overline{a}_{ij}^{f}\partial_{i}f)(\partial_{j}\varphi)=-\underset{\omega'}{\int}(\overline{b}_{i}^{f}\partial_{i}f+f\partial_{i}\overline{b}_{i}^{f})\varphi.
\]

Hence 

\begin{equation}
-\underset{\omega'}{\int}f\partial_{t}\varphi+\underset{\omega'}{\int}(\overline{a}_{ij}^{f}\partial_{i}f)(\partial_{j}\varphi)+\underset{\omega'}{\int}\nabla f\cdot V\varphi=-\underset{\omega'}{\int}f\varphi\partial_{i}\overline{b}_{i}^{f}.\label{eq:-2}
\end{equation}

where $V=(\overline{b}_{1}^{f},\overline{b}_{2}^{f},\overline{b}_{3}^{f})$.
By an elementary calculation $-\partial_{i}(b_{i})$ is positively
proportional to the Dirac distribution, which in turn implies that
$-\partial_{i}(\overline{b}_{i}^{f})$ is positively proportional
to $f$. Therefore the RHS of (\ref{eq:-2}) is nonnegative. Thus 

\[
-\underset{\omega'}{\int}f\partial_{t}\varphi+\underset{\omega'}{\int}(\overline{a}_{ij}^{f}\partial_{i}f)(\partial_{j}\varphi)+\underset{\omega'}{\int}\nabla f\cdot V\varphi-\underset{B'}{\int}f\varphi(0,v)dv\geq0.
\]

Since $\overline{b_{i}}^{f}\in L_{\mathrm{loc}}^{\infty}(\omega)$
we know in particular that $|V|^{2}\in L^{\infty}L^{q}(\omega')$
for some $q>\frac{3}{2}$ which obviously implies the integrability
condition imposed for $V$ in theorem \ref{Theorem 3.5}. Therefore
$f$ is locally H\"older continuous in $\omega$.

2. i. We show that for each fixed $t$, $\overline{a_{ij}}^{f}(t,\cdot)$
is differentiable. 

Let $B'\Subset B,J'\Subset J$. Denote by $\rho>0$ the radius of
$B'$ and pick $\epsilon>0$ so small so that $B''\coloneqq B{}_{\rho+2\epsilon}\Subset B$. 

\[
\mathbf{1}_{B'}(v)(\partial_{kl}\overline{a_{ij}}^{f})(t,v)=\mathbf{1}_{B'}(v)\underset{\mathbb{R}^{3}}{\int}\partial_{kl}a_{ij}(v-w)f(t,w)dw=
\]

\[
=\mathbf{1}_{B'}(v)\underset{\mathbb{R}^{3}}{\int}\partial_{kl}a_{ij}(v-w)\mathbf{1}_{B''}(w)f(t,w)dw+\mathbf{1}_{B'}(v)\underset{\mathbb{R}^{3}}{\int}\partial_{kl}a_{ij}(v-w)\mathbf{1}_{\mathbb{R}^{3}-B''}(w)f(t,w)dw=
\]

\[
=\mathbf{1}_{B'}(v)\underset{\mathbb{R}^{3}}{\int}\partial_{kl}a_{ij}(v-w)\mathbf{1}_{B''}(w)f(t,w)dw+\mathbf{1}_{B'}(v)\underset{\mathbb{R}^{3}}{\int}\partial_{kl}a_{ij}(v-w)\mathbf{1}_{|v-w|\geq2\epsilon}\mathbf{1}_{\mathbb{R}^{3}-B''}(w)f(t,w)dw\coloneqq g_{t}(\cdot)+h_{t}(\cdot).
\]

Lemma \ref{Lemma 3.6-1}, theorem \ref{Theorem 3.6 } and the assumption
on $f$ imply that $g_{t}\in L^{q}(B')$. Furthermore, by Young's
convolution inequality: 

\[
||h_{t}(\cdot)||_{L^{q}(B')}=||h_{t}(\cdot)||_{L^{q}(\mathbb{R}^{3})}=||(|\mathbf{1}_{|\cdot|\geq2\epsilon}(\partial_{kl}a_{ij})|)\ast(\mathbf{1}_{\mathbb{R}^{3}-B''}f)||_{L^{q}(\mathbb{R}^{3})}\leq||\mathbf{1}_{|\cdot|\geq2\epsilon}\partial_{kl}a_{ij}||_{L^{q}(\mathbb{R}^{3})}||\mathbf{1}_{\mathbb{R}^{3}-B''}f(t,\cdot)||_{L^{1}(\mathbb{R}^{3})}<\infty.
\]

We move to show that $\overline{a_{ij}}^{f}(t,\cdot)\in L^{q}(\omega)$.
The proof is similar to the argument presented at the begining. Denote
by $R$ the radius of the ball $B.$ We have 

\[
\underset{\mathbb{R}^{3}}{\int}|a_{ij}(v-z)|f(t,z)dz\leq2\underset{\mathbb{R}^{3}}{\int}\frac{1}{|v-z|}f(t,z)dz=2\underset{\mathbb{R}^{3}}{\int}\frac{1}{|z|}f(t,v-z)dz=
\]

\[
2\underset{|z|\leq R}{\int}\frac{1}{|z|}f(t,v-z)dz+2\underset{|z|>R}{\int}\frac{1}{|z|}f(t,v-z)dz\leq C(||f(t,\cdot)||_{L^{q}(B)}+||f(t,\cdot)||_{1}).
\]
for some constant $C=C(q)$ (The finiteness of $\underset{|z|\leq R}{\int}\frac{1}{|z|^{\frac{q}{q-1}}}dz$
is guaranteed because of the assumption $q>3$). So $\underset{\omega}{\sup}\underset{\mathbb{R}^{3}}{\int}|a_{ij}(v-z)|f(t,z)dz\leq C(q)(||f||_{L^{\infty}L^{q}(\omega)}+||f||_{L^{\infty}L^{1}(\mathbb{R}^{3})})$,
and in particular $\overline{a_{ij}}^{f}(t,\cdot)\in L^{q}(\omega)$.
Thus we have proved $\overline{a_{ij}}^{f}(t,\cdot)\in W^{2,q}(B')$
for $q>3$ which by Sobolev embedding implies that $\overline{a_{ij}}^{f}(t,\cdot)$
is $C^{1}(B')$. 

We prove continuity with respect to $t$. Pick $\chi\in C_{0}^{\infty}(\mathbb{R}^{3})$
with $\chi\equiv1$ on $B_{\epsilon}(0)$ and $\mathrm{supp}(\chi)\Subset B_{2\epsilon}(0)$.
\[
\mathbf{1}_{B'}(v)\overline{a_{ij}}^{f}(t,v)=\mathbf{1}_{B'}\underset{\mathbb{R}^{3}}{\int}a_{ij}(v-w)f(t,w)dw=\mathbf{1}_{B'}(\underset{\mathbb{R}^{3}}{\int}\chi(v-w)a_{ij}(v-w)f(t,w)dw+\underset{\mathbb{R}^{3}}{\int}(1-\chi(v-w))a_{ij}(v-w)f(t,w)dw)=
\]

\[
=\mathbf{1}_{B'}(\underset{|v-w|\leq2\epsilon}{\int}\chi(v-w)a_{ij}(v-w)f(t,w)dw+\underset{\mathbb{R}^{3}}{\int}(1-\chi(v-w))a_{ij}(v-w)f(t,w)dw)=
\]

\[
=\mathbf{1}_{B'}\underset{|v-w|\leq2\epsilon}{\int}\chi(v-w)a_{ij}(v-w)\mathbf{1}_{B''}(w)f(t,w)dw+\mathbf{1}_{B'}\underset{|v-w|\leq2\epsilon}{\int}\chi(v-w)a_{ij}(v-w)\mathbf{1}_{\mathbb{R}^{3}\setminus B''}(w)f(t,w)dw
\]

\[
+\mathbf{1}_{B'}\underset{\mathbb{R}^{3}}{\int}(1-\chi(v-w))a_{ij}(v-w)f(t,w)dw=
\]

\[
\mathbf{1}_{B'}\underset{|v-w|\leq2\epsilon}{\int}\chi(v-w)a_{ij}(v-w)\mathbf{1}_{B''}(w)f(t,w)dw+\mathbf{1}_{B'}\underset{\mathbb{R}^{3}}{\int}(1-\chi(v-w))a_{ij}(v-w)f(t,w)dw\coloneqq\widetilde{g_{t}}(v)+\widetilde{h_{t}}(v).
\]

Keep $v\in B'$ fixed. That $t\mapsto\widetilde{g_{t}}(v)$ is continuous
is an immediate consequence of 1 and the CS inequality. Furthermore
it is clear that $w\mapsto(1-\chi(v-w))a_{ij}(v-w)\in W^{2,\infty}(\mathbb{R}^{3})$,
which by theorem \ref{Theorem 2.2} implies that $t\mapsto\widetilde{h_{t}}(v)$
is continuous. So $\overline{a_{ij}}^{f}(\cdot,v)$ is continuous
on $J$ as a sum of such functions. 

ii. We already know that $\partial_{k}a_{ij}(t,\cdot)$ is continuous
by 2.i. Continuity with respect to $t$ is achieved as in 2.i ( here
we use H\"older's inequality instead of CS). 
\end{onehalfspace}
\end{proof}
\begin{onehalfspace}
Before giving the proof of theorem \ref{Theorem 3.4}, we will need
the following existence and uniqueness results, which will also prove
themselves useful in section 4. 
\end{onehalfspace}
\begin{thm}
\begin{onehalfspace}
\begin{flushleft}
\textup{\label{Theorem 3.7} (Theorem 6.1,III in \cite{15})} Suppose
$A_{ij}(t,v)$ satisfies the following conditions:
\par\end{flushleft}
\begin{flushleft}
1. $A_{ij}$ are locally uniformly elliptic on $\omega$. 
\par\end{flushleft}
\begin{flushleft}
2. For all $t\in J,$ $A_{ij}(t,\cdot)$ are differentiable with respect
to $v$ and $\underset{J}{\mathrm{ess}\sup}|\partial_{v_{k}}A_{ij}|<\infty$
for all $1\leq k\leq3$. 
\par\end{flushleft}
\begin{flushleft}
Suppose $F\in L^{2}(\omega)$. Then the problem 
\begin{equation}
\left\{ \begin{array}{cc}
\partial_{t}u-\partial_{j}(A_{ij}\partial_{i}u)=F & \omega\\
u=0 & [0,S]\times\partial B\\
u=0 & \{0\}\times B
\end{array}\right.\label{eq:}
\end{equation}
\par\end{flushleft}
\begin{flushleft}
has a unique solution from $W_{2}^{2,1}(\omega)$. Moreover, this
solution is a strong solution. 
\par\end{flushleft}
\end{onehalfspace}

\end{thm}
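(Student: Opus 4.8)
Since the statement coincides with Theorem~6.1 of Chapter~III of \cite{key-7}, one could simply cite it; I nevertheless sketch the argument. The plan is the classical Galerkin-plus-regularity scheme for uniformly parabolic divergence-form equations, in four steps. \textbf{Step 1 (energy solution).} First I would construct a generalized solution $u\in W_2^{1,0}(\omega)\cap L^\infty(J;L^2(B))$ with vanishing lateral and initial traces. Fix an orthonormal basis $\{w_k\}_{k\geq1}$ of $L^2(B)$ made of Dirichlet eigenfunctions of $-\Delta$ on $B$, look for Galerkin approximations $u_N(t,\cdot)=\sum_{k=1}^{N}c_k^N(t)w_k$ solving the finite linear ODE system obtained by testing the weak formulation of (\ref{eq:}) against $w_1,\dots,w_N$, and test this system with $u_N$ itself. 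Hypothesis~1 (uniform parabolicity; on the bounded cylinder $\omega$ the two-sided ellipticity bounds of Corollary~\ref{Corollary 3.3 } make this a genuine uniform ellipticity on $\overline{\omega}$) together with Young's inequality yields $\sup_{t\in J}\|u_N(t)\|_{L^2(B)}^2+\int_\omega|\nabla u_N|^2\lesssim\|F\|_{L^2(\omega)}^2$ uniformly in $N$; a standard weak-compactness argument then produces a limit $u$ solving the problem weakly.

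\textbf{Step 2 (uniqueness).} If $w$ is the difference of two such solutions it solves (\ref{eq:}) with $F=0$ and zero data, and testing with $w$ and using the lower ellipticity bound gives $\tfrac{d}{dt}\|w(t)\|_{L^2(B)}^2\leq0$, so $w\equiv0$. \textbf{Step 3 ($W_2^{2,1}$-regularity).} Here hypothesis~2 is essential: since each $A_{ij}(t,\cdot)$ is differentiable with $\partial_k A_{ij}\in L^\infty(\omega)$, the equation can be recast in non-divergence form
\[
\partial_t u-A_{ij}\partial_{ij}u=F+(\partial_j A_{ij})\partial_i u=:\widetilde F ,
\]
and $\widetilde F\in L^2(\omega)$ since $\nabla u\in L^2(\omega)$ by Step~1. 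As the $A_{ij}$ are continuous on $\overline{\omega}$ and uniformly parabolic, the parabolic $L^2$-regularity theory (Calderon--Zygmund estimates for such operators) applies: interior and tangential second-order bounds come from the difference-quotient method --- differencing the equation in the spatial directions, testing against the difference quotient of $u$, and absorbing --- the missing normal--normal second derivative near the lateral boundary $[0,S]\times\partial B$ is recovered algebraically from the non-divergence equation after flattening $\partial B$, and the vanishing initial datum provides the compatibility needed to reach the bottom $\{0\}\times B$; one obtains $\partial_{ij}u,\partial_t u\in L^2(\omega)$, i.e. $u\in W_2^{2,1}(\omega)$. \textbf{Step 4.} Once $u\in W_2^{2,1}(\omega)$ the equation holds for a.e. $(t,v)\in\omega$, so $u$ is a strong solution.

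The step I expect to be the real obstacle is Step~3, and within it the \emph{up-to-the-boundary} Hessian estimate: the difference-quotient argument is routine in the interior and in directions tangent to $\partial B$, but controlling the full $v$-Hessian of $u$ near $[0,S]\times\partial B$ and near $\{0\}\times B$ requires flattening the boundary, tracking how the coefficient bounds of hypothesis~2 transform, and using the equation itself to close the estimate in the normal direction. Everything else is soft: the energy identities of Steps~1--2 use only uniform parabolicity and the boundedness of $A_{ij}$, and Step~4 is automatic.
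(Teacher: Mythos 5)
The paper offers no proof of this statement: it is quoted verbatim from Theorem 6.1 of Chapter III of Ladyzhenskaya--Solonnikov--Uraltseva \cite{key-7}, and your Galerkin/energy/difference-quotient sketch is essentially the argument given there, so the approaches coincide. The only caveat is your passing appeal to continuity of $A_{ij}$ on $\overline{\omega}$, which is not among the hypotheses and is not needed for the $L^{2}$ difference-quotient estimates (boundedness of $A_{ij}$ and of $\partial_{v_{k}}A_{ij}$, together with the uniform parabolicity, suffices).
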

\begin{onehalfspace}

\end{onehalfspace}\begin{thm}
\begin{onehalfspace}
\begin{flushleft}
\textup{\label{Theorem 3.8} (Theorem 3.3, III \cite{15})} Suppose
$A_{ij}(t,v)$ are locally uniformly elliptic. Let $F\in L^{2}(\omega)$.
Then problem (\ref{eq:}) cannot have more than one weak solution
in $W_{2}^{1,0}(\omega)$. 
\par\end{flushleft}
\end{onehalfspace}

\end{thm}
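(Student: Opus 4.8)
The statement is the classical $L^{2}$-uniqueness theorem for linear parabolic equations in divergence form with bounded, measurable, uniformly elliptic coefficients, and the plan is to run the standard energy argument; the one delicate point is that a weak solution living merely in $W_{2}^{1,0}(\omega)$ has no a priori regularity in $t$, so it cannot be tested against itself without preparation. First I would reduce to the homogeneous problem: if $u_{1},u_{2}\in W_{2}^{1,0}(\omega)$ both solve (\ref{eq:}), then $w:=u_{1}-u_{2}$ lies in $W_{2}^{1,0}(\omega)$, has vanishing spatial trace on $[0,S]\times\partial B$, and satisfies
\[
\int_{\omega}\bigl(-w\,\partial_{t}\varphi+A_{ij}\,\partial_{i}w\,\partial_{j}\varphi\bigr)\,dv\,dt=0
\]
for every admissible test function $\varphi\in W_{2}^{1,1}(\omega)$ with vanishing spatial trace and $\varphi(S,\cdot)=0$ (the zero initial data of $u_{1},u_{2}$ being encoded by the fact that $\varphi(0,\cdot)$ is allowed to be nonzero). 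The goal is to show $w\equiv0$.

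The second step recovers the missing time regularity. Put $V:=\mathring{W}_{2}^{1}(B)$, $H:=L^{2}(B)$, so that $V\hookrightarrow H\hookrightarrow V^{*}$ is a Gelfand triple; since $w\in W_{2}^{1,0}(\omega)$ with zero boundary trace, $w\in L^{2}(J;V)$. Testing the identity with $\varphi(t,v)=\eta(t)\psi(v)$, $\eta\in C_{c}^{\infty}(J)$, $\psi\in V$, shows that $t\mapsto\int_{B}w(t,\cdot)\psi\,dv$ is weakly differentiable with derivative $-\int_{B}A_{ij}(t,\cdot)\,\partial_{i}w(t,\cdot)\,\partial_{j}\psi\,dv$, which is an $L^{2}(J)$ function of $t$ (bounded by a constant times $\|\nabla_{v}w(t,\cdot)\|_{L^{2}(B)}\|\nabla\psi\|_{L^{2}(B)}$, using boundedness of the $A_{ij}$). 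Hence $\partial_{t}w\in L^{2}(J;V^{*})$ with $\langle\partial_{t}w(t),\psi\rangle=-\int_{B}A_{ij}(t,\cdot)\,\partial_{i}w\,\partial_{j}\psi\,dv$. By the Lions--Magenes lemma, $w$ admits a representative in $C(\overline{J};H)$ and $t\mapsto\|w(t,\cdot)\|_{L^{2}(B)}^{2}$ is absolutely continuous with $\tfrac{d}{dt}\|w(t,\cdot)\|_{L^{2}(B)}^{2}=2\langle\partial_{t}w(t),w(t)\rangle$. Feeding into the weak identity test functions $\varphi$ with $\varphi(0,\cdot)\neq0$ and integrating by parts in $t$ (now legitimate) yields $w(0,\cdot)=0$ in $H$.

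Third, the chain rule, the representation of $\partial_{t}w$, and the coercivity of the $A_{ij}$ (uniform ellipticity on the compact set $\overline{\omega}$, cf. Corollary \ref{Corollary 3.3 }) give, for a.e. $t\in J$,
\[
\frac{d}{dt}\|w(t,\cdot)\|_{L^{2}(B)}^{2}=-2\int_{B}A_{ij}(t,v)\,\partial_{i}w\,\partial_{j}w\,dv\le-2\lambda\int_{B}|\nabla_{v}w(t,v)|^{2}\,dv\le0 .
\]
Integrating from $0$ to $\tau$ and using $w(0,\cdot)=0$ gives $\|w(\tau,\cdot)\|_{L^{2}(B)}^{2}\le0$ for every $\tau\in J$, hence $w\equiv0$ and $u_{1}=u_{2}$.

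As anticipated, the only genuine obstacle is justifying this energy computation for a solution that lives only in $W_{2}^{1,0}(\omega)$; the Gelfand-triple / Lions--Magenes route above handles it, and has the virtue of requiring nothing of the $A_{ij}$ beyond boundedness, measurability and ellipticity (no symmetry, no regularity in $t$). A more elementary alternative, which avoids the abstract lemma, is to test the weak identity directly with $\varphi(t,v)=\int_{t}^{\tau}w(s,v)\,ds$ for $t<\tau$ and $\varphi\equiv0$ for $t\ge\tau$; this produces $\int_{0}^{\tau}\!\int_{B}w^{2}$ plus a term quadratic in the time-primitive of $\nabla_{v}w$, which is signed by ellipticity — directly when $A_{ij}$ is symmetric and independent of $t$, and after a short Gronwall step in general.
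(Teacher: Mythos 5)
The paper offers no proof of this statement: it is quoted directly from Ladyzhenskaya--Solonnikov--Uraltseva (Theorem 3.3 of Chapter III in \cite{key-7}), so there is no argument of the author's to compare yours against. Your proof is correct and is essentially the standard modern proof of that cited result. The reduction to $w=u_{1}-u_{2}$ with $F=0$, the recovery of $\partial_{t}w\in L^{2}(J;V^{*})$ from the weak formulation itself, the Lions--Magenes continuity and energy identity, the identification $w(0,\cdot)=0$ from the class of admissible test functions, and the ellipticity-driven differential inequality are all sound; you correctly identify the absence of a priori time regularity for a $W_{2}^{1,0}(\omega)$ solution as the only genuine obstacle, and the Gelfand-triple route disposes of it. (The proof in \cite{key-7} reaches the same energy identity more concretely via Steklov averaging in $t$, which is in the spirit of the ``elementary alternative'' you sketch at the end; your abstract version is cleaner and equally rigorous.) One point you should make explicit rather than leave implicit: besides the lower ellipticity bound you use an upper bound $A_{ij}\xi_{i}\xi_{j}\leq\Lambda|\xi|^{2}$, i.e.\ boundedness of the coefficients, both to place $\partial_{t}w$ in $L^{2}(J;V^{*})$ and to control the bilinear form. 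The theorem as stated says only ``locally uniformly elliptic'', but in the paper's application the two-sided bound on a subcylinder $\omega'\Subset\omega$ is supplied by the local ellipticity corollary (Corollary 3.3) together with the boundedness of $\overline{a}_{ij}^{f}$ established in Lemma 3.8, so nothing is lost.
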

\begin{onehalfspace}
\textit{Proof of theorem} \ref{Theorem 3.4}. Suppose $\Omega=(T_{1},T_{2})\times B'\Subset\omega$.
Let $\omega'\coloneqq(T_{1}',T_{2}')\times B''$ such that $\Omega\Subset\omega'\Subset\omega$.
Let $\zeta\in C_{0}^{\infty}(\omega')$ such that $\zeta\equiv1$
on $\Omega$ and put $u=\zeta f$. Fix some test function $\chi\in C_{0}^{2}([0,T)\times\mathbb{R}^{3})$.
We compute 

\[
\underset{\omega'}{\int}-u\partial_{t}\chi+(\overline{a}_{ij}^{f})\partial_{i}u\partial_{j}\chi=\stackrel[0]{T}{\int}\underset{\mathbb{R}^{3}}{\int}-u\partial_{t}\chi+(\overline{a}_{ij}^{f})\partial_{i}u\partial_{j}\chi=\stackrel[0]{T}{\int}\underset{\mathbb{R}^{3}}{\int}-\zeta f\partial_{t}\chi+\stackrel[0]{T}{\int}\underset{\mathbb{R}^{3}}{\int}(\overline{a}_{ij}^{f})(\partial_{i}\zeta f+\partial_{i}f\zeta)\partial_{j}\chi
\]

\[
=\stackrel[0]{T}{\int}\underset{\mathbb{R}^{3}}{\int}-\zeta f\partial_{t}\chi+\stackrel[0]{T}{\int}\underset{\mathbb{R}^{3}}{\int}(\overline{a}_{ij}^{f})\partial_{i}f(\partial_{j}(\zeta\chi)-\chi\partial_{j}\zeta)+\stackrel[0]{T}{\int}\underset{\mathbb{R}^{3}}{\int}(\overline{a}_{ij}^{f})f\partial_{j}\chi\partial_{i}\zeta=
\]
 
\[
=-\stackrel[0]{T}{\int}\underset{\mathbb{R}^{3}}{\int}f\partial_{t}(\zeta\chi)+\stackrel[0]{T}{\int}\underset{\mathbb{R}^{3}}{\int}(\overline{a}_{ij}^{f})\partial_{i}f\partial_{j}(\zeta\chi)+\stackrel[0]{T}{\int}\underset{\mathbb{R}^{3}}{\int}f\chi\partial_{t}\zeta-\stackrel[0]{T}{\int}\underset{\mathbb{R}^{3}}{\int}(\overline{a}_{ij}^{f})\chi\partial_{i}f\partial_{j}\zeta+\stackrel[0]{T}{\int}\underset{\mathbb{R}^{3}}{\int}(\overline{a}_{ij}^{f})f\partial_{j}\chi\partial_{i}\zeta.
\]

By equation (\ref{eq:-3}) the last sum is 

\[
=\stackrel[0]{T}{\int}\underset{\mathbb{R}^{3}}{\int}f\overline{b}_{i}^{f}\partial_{i}(\zeta\chi)+\stackrel[0]{T}{\int}\underset{\mathbb{R}^{3}}{\int}(\partial_{t}\zeta)f\chi-\stackrel[0]{T}{\int}\underset{\mathbb{R}^{3}}{\int}(\overline{a}_{ij}^{f})\chi\partial_{i}f\partial_{j}\zeta+\stackrel[0]{T}{\int}\underset{\mathbb{R}^{3}}{\int}(\overline{a}_{ij}^{f})f\partial_{j}\chi\partial_{i}\zeta.
\]

Since $f\in W_{2}^{1,0}(\omega')$ and $\overline{a_{ij}}^{f}(t,\cdot),\overline{b}_{i}^{f}(t,\cdot)\in W_{2}^{1}(B')$
for each fixed $t$, the last expression may be integrated by parts
and recasted as

\[
=-\underset{\omega'}{\int}(\zeta f\partial_{i}(\overline{b}_{i}^{f})+\zeta\overline{b}_{i}^{f}\partial_{i}f)\chi+\underset{\omega}{\int}\chi(f\partial_{t}\zeta-f\partial_{j}(\overline{a}_{ij}^{f})\partial_{i}\zeta-\overline{a}_{ij}^{f}(f\partial_{i}\partial_{j}\zeta+\partial_{j}f\partial_{i}\zeta)-\overline{a}_{ij}^{f}\partial_{i}f_{i}\partial_{j}\zeta)=
\]

\begin{equation}
=\underset{\omega'}{\int}(f\partial_{t}\zeta-f\partial_{j}(\overline{a}_{ij}^{f})\partial_{i}\zeta-\overline{a}_{ij}^{f}(f\partial_{i}\partial_{j}\zeta+\partial_{j}f\partial_{i}\zeta)-\overline{a}_{ij}^{f}\partial_{i}f\partial_{j}\zeta-\zeta f\partial_{i}(\overline{b}_{i}^{f})-\zeta\overline{b}_{i}^{f}\partial_{i}f)\chi\coloneqq\underset{\omega'}{\int}F\chi.\label{eq:5-1}
\end{equation}

Thus, we find that $u$ is a $W_{2}^{1,0}(\omega')$ solution to the
linear equation 

\begin{equation}
\left\{ \begin{array}{cc}
\partial_{t}u-\partial_{j}(\overline{a}_{ij}^{f}\partial_{i}u)=F & \omega'\\
u=0 & [T_{1}',T_{2}']\times\partial B''\\
u=0 & \{T_{1}'\}\times B''
\end{array}\right..\label{eq:5}
\end{equation}

Keeping in mind Step 1 of lemma \ref{Lemma 3.6} (in particular $f\in L_{loc}^{\infty}(\omega)$
and $f\in L^{\infty}L^{s'}(\omega)$ where $s'>3$), we make the following
key observations

i. $f\in L^{2}(\omega')$ 

ii. By theorem \ref{Theorem 2.3 } $(\sqrt{f})_{i}\in L^{2}(\omega')$
and thus $f_{i}=2\sqrt{f}(\sqrt{f})_{i}\in L^{2}(\omega')$ 

iii. $\overline{a}_{ij}^{f},\overline{b}_{i}^{f}\in L^{\infty}(\omega')$ 

iv. $(\overline{b}_{i}^{f})_{i}f$ is proportional to $f^{2}$. 

From i-iv we immediately see that $F\in L^{2}(\omega')$. Moreover,
lemma \ref{Lemma 3.6} shows that the condition $\underset{(T_{1}',T_{2}')}{\mathrm{ess}\sup}|(\overline{a}_{ij}^{f})_{k}|<\infty$
required in theorem \ref{Theorem 3.7} is verified, and so we know
that equation (\ref{eq:5}) has a unique $W_{2}^{2,1}(\omega')$ solution,
call it $\widetilde{u}$. In particular $\widetilde{u}$ is a $W_{2}^{1,0}(\omega')$
solution. Now, viewing equation (\ref{eq:5}) as an equation for the
space $W_{2}^{1,0}(\omega')$ and owing to the uniqueness provided
by theorem \ref{Theorem 3.8}, it follows that $u=\widetilde{u}\in W_{2}^{2,1}(\omega')$,
which implies $f\in W_{2}^{2,1}(\Omega)$. 
\end{onehalfspace}
\begin{onehalfspace}
\begin{flushright}
$\square$
\par\end{flushright}
\end{onehalfspace}

\begin{onehalfspace}
With the aid of the following estimate we can improve the Lebesgue
exponent of the first order spatial derivatives of $f$ 
\end{onehalfspace}
\begin{lem}
\begin{onehalfspace}
\begin{flushleft}
\textup{(Lemma 3.3, II in \cite{15})} \label{Lemma 3.9 } Suppose
$u\in W_{q}^{2,1}(\omega)$ and $1\leq q\leq r<\infty,\frac{1}{q}-\frac{1}{r}\leq\frac{1}{5}$.
Then 
\par\end{flushleft}
\begin{flushleft}
\begin{equation}
||\partial_{v_{i}}u||_{r,\omega}\leq C_{1}(||u||_{q,\omega}+||\partial_{v_{i}}u||_{q,\omega}+||\partial_{v_{i}v_{j}}u||_{q,\omega})+C_{2}||u||_{q,\omega}.\label{eq:6-1}
\end{equation}
\par\end{flushleft}
\end{onehalfspace}
\end{lem}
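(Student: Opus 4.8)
\emph{Proof strategy.} The estimate is the parabolic form of the Gagliardo-Nirenberg-Sobolev inequality: in $\mathbb{R}\times\mathbb{R}^{3}$ the time variable is weighted like two spatial derivatives, so the relevant homogeneous dimension is $5$, and gaining one spatial derivative produces exactly the gap exponent $\frac1q-\frac1r=\frac15$ that appears in the statement. Since the inequality lives on the bounded cylinder $\omega=(0,S)\times B$, the first step is a reduction to the whole space: using a standard extension operator for the anisotropic space $W_{q}^{2,1}$ on the cylinder (a Stein-type extension in the ball $B$ composed with a reflection across the time faces $\{0\}\times B$ and $\{S\}\times B$), one obtains $\tilde u\in W_{q}^{2,1}(\mathbb{R}^{4})$ of compact support with $\tilde u=u$ on $\omega$ and $\|\tilde u\|_{q,\mathbb{R}^{4}}^{(2)}\le C(\omega)\|u\|_{q,\omega}^{(2)}$; it then suffices to bound $\|\partial_{v_{i}}\tilde u\|_{L^{r}(\mathbb{R}^{4})}$.

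Next I would set $g:=\partial_{t}\tilde u-\Delta_{v}\tilde u\in L^{q}(\mathbb{R}^{4})$, with $\|g\|_{q}\lesssim\|\tilde u\|_{q}^{(2)}$. Since $\tilde u$ has compact support it vanishes for $t\ll0$, so it is the unique such solution of $\partial_{t}\tilde u-\Delta_{v}\tilde u=g$, and Duhamel's formula gives $\tilde u(t,\cdot)=\int_{-\infty}^{t}e^{(t-s)\Delta_{v}}g(s,\cdot)\,ds$. Differentiating in $v$ and invoking the elementary Gaussian bounds $\|\nabla_{v}e^{\sigma\Delta_{v}}h\|_{L^{r}_{v}(\mathbb{R}^{3})}\lesssim\sigma^{-\frac12-\frac32(\frac1q-\frac1r)}\|h\|_{L^{q}_{v}(\mathbb{R}^{3})}$, one obtains, for a.e.\ $t$,
\[
\|\nabla_{v}\tilde u(t,\cdot)\|_{L^{r}_{v}}\lesssim\int_{-\infty}^{t}(t-s)^{-\frac12-\frac32(\frac1q-\frac1r)}\|g(s,\cdot)\|_{L^{q}_{v}}\,ds .
\]
Here $\frac12+\frac32(\frac1q-\frac1r)<1$ because $\frac1q-\frac1r\le\frac15<\frac13$, so the right-hand side is a one-sided truncation of the one-dimensional Riesz potential of order $\beta:=\frac12-\frac32(\frac1q-\frac1r)$ applied to $s\mapsto\|g(s,\cdot)\|_{L^{q}_{v}}\in L^{q}(\mathbb{R})$. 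The one-dimensional Hardy-Littlewood-Sobolev inequality bounds its $L^{r}_{t}$-norm by $\|g\|_{L^{q}_{t}L^{q}_{v}}$ precisely when $\frac1q-\frac1r=\beta$, i.e.\ when $\frac1q-\frac1r=\frac15$, giving $\|\nabla_{v}\tilde u\|_{L^{r}(\mathbb{R}^{4})}\lesssim\|g\|_{L^{q}(\mathbb{R}^{4})}\lesssim\|u\|_{q,\omega}^{(2)}$ in the critical case.

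For the remaining range $\frac1q-\frac1r<\frac15$ with $q<5$, choose the intermediate exponent $\tilde r$ with $\frac1q-\frac1{\tilde r}=\frac15$; then $r\le\tilde r$, so $\|\partial_{v_{i}}u\|_{L^{r}(\omega)}\le|\omega|^{\frac1r-\frac1{\tilde r}}\|\partial_{v_{i}}u\|_{L^{\tilde r}(\omega)}$ reduces matters to the case already treated. When $q\ge5$ the gap condition permits every finite $r$, and the same representation together with the parabolic embedding of $W_{q}^{2,1}$ into a space of bounded (indeed H\"older, when $q>5$) functions already places $\partial_{v_{i}}u$ in $L^{r}(\omega)$ for all $r<\infty$, since $\omega$ is bounded. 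Finally, the two-constant form in the statement -- where the first constant multiplies the second-order seminorm of $u$ and the second only $\|u\|_{q}$ -- follows from the plain bound $\|\partial_{v_{i}}u\|_{r,\omega}\lesssim\|u\|_{q,\omega}^{(2)}$ by the classical interpolation-with-$\varepsilon$ device: cover $\omega$ by parabolic subcylinders of small size $\lambda$, apply the scale-covariant form of the estimate on each, and sum, the small factor accruing to the top-order terms and a large power of $\lambda$ to $\|u\|_{q}$.

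The step I expect to be the real obstacle is the very first one: a bounded extension for the \emph{anisotropic} space $W_{q}^{2,1}(\omega)$ across the parabolic boundary, in particular across the time faces, since $W_{q}^{2,1}$ controls only one time derivative, so one must check that reflection in $t$ keeps the full $v$-Hessian and the $t$-derivative in $L^{q}$ and creates no singular layer along $\{0,S\}\times\partial B$; once the extension is available, everything downstream -- Duhamel's formula, Gaussian semigroup bounds, and a one-dimensional fractional-integration inequality -- is routine. (The assertion is exactly Lemma~3.3 of Chapter~II of \cite{key-7}, so in the body of the paper one may simply cite it; the above indicates how it is proved.)
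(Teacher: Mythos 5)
The paper offers no proof of this lemma: it is quoted verbatim as Lemma~3.3 of Chapter~II of Ladyzhenskaya--Solonnikov--Ural'tseva \cite{key-7} and used as a black box, so there is no internal argument to compare yours against. As a reconstruction of the cited result, your sketch is sound and takes a genuinely different route from LSU, who obtain such embeddings from explicit integral representations and their ``multiplicative inequalities'' rather than from the heat semigroup. Your chain (extension to $\mathbb{R}^{4}$, Duhamel with $g=\partial_{t}\tilde u-\Delta_{v}\tilde u$, the Gaussian bound $\|\nabla_{v}e^{\sigma\Delta_{v}}h\|_{r}\lesssim\sigma^{-\frac12-\frac32(\frac1q-\frac1r)}\|h\|_{q}$, then one-dimensional fractional integration in $t$) is correct, and the exponent bookkeeping does close: the kernel exponent $\frac12+\frac32\cdot\frac15=\frac45$ gives a Riesz potential of order $\frac15$, which is exactly the gap $\frac1q-\frac1r$ allowed by the one-dimensional Hardy--Littlewood--Sobolev inequality. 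What this approach buys is transparency about where the homogeneous dimension $5$ comes from; what it costs is the extension step you rightly flag, which LSU avoid by working with local representation formulas.

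Two edge cases in your argument are glossed over, though neither matters for this paper. First, at the critical exponent the Hardy--Littlewood--Sobolev inequality requires $q>1$, so your proof does not reach the stated endpoint $q=1$, $r=\frac54$ (one only gets a weak-type bound there). Second, for $q=5$ your intermediate exponent is $\tilde r=\infty$ and $W_{5}^{2,1}$ does not embed into functions with bounded gradient; the fix is simply to lower $q$ slightly on the bounded cylinder and reuse the subcritical case, which your H\"older reduction already accommodates. The paper only ever invokes the lemma with $q=2$, $q=\frac{10}{3}$ and $q=10$, all safely away from these endpoints. Finally, note that the two-constant form as displayed here follows trivially from the single bound $\|\partial_{v_{i}}u\|_{r,\omega}\lesssim\|u\|_{q,\omega}^{(2)}$ (take $C_{2}=0$); the covering-and-rescaling device you describe is only needed for the sharper version in \cite{key-7} in which $C_{1}$ may be taken arbitrarily small at the price of a large $C_{2}$.
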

\begin{cor}
\begin{onehalfspace}
\begin{flushleft}
\label{Corollary 3.10 } Let $f$ be a H-solution. Suppose there exist
$S_{0}>0,q>3$ such that for all $t\in J$ one has $||f(t,\cdot)||_{L^{q}(B)}\leq S_{0}$
.Then $\partial_{v_{i}}f\in L^{\frac{10}{3}}(\Omega)$ for all $\Omega\Subset\omega$. 
\par\end{flushleft}
\end{onehalfspace}
\end{cor}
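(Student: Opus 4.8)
\emph{Proof of Corollary \ref{Corollary 3.10 } (proposal).} The plan is to feed the conclusion of Theorem \ref{Theorem 3.4} into the parabolic embedding estimate of Lemma \ref{Lemma 3.9 }. Fix an arbitrary $\Omega\Subset\omega$. Since $\Omega$ is relatively compact in $\omega$, we may choose a cylinder $\Omega'=(T_{1},T_{2})\times B'$ with $\Omega\Subset\Omega'\Subset\omega$. By Theorem \ref{Theorem 3.4} (whose hypotheses are exactly those assumed here, in particular $q>3$), we have $f\in W_{2}^{2,1}(\Omega')$; in particular the norms $\|f\|_{2,\Omega'}$, $\|\partial_{v_{i}}f\|_{2,\Omega'}$ and $\|\partial_{v_{i}v_{j}}f\|_{2,\Omega'}$ are all finite.

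Now apply Lemma \ref{Lemma 3.9 } on the cylinder $\Omega'$ with $q=2$ and $r=\frac{10}{3}$. The admissibility condition $\frac{1}{q}-\frac{1}{r}\leq\frac{1}{5}$ is satisfied, indeed with equality, since $\frac{1}{2}-\frac{3}{10}=\frac{1}{5}$ (here the exponent $5$ is the parabolic dimension $3+2$ in $\mathbb{R}^{3}\times\mathbb{R}_{t}$). Hence estimate (\ref{eq:6-1}) gives
\[
\|\partial_{v_{i}}f\|_{\frac{10}{3},\Omega'}\leq C_{1}\big(\|f\|_{2,\Omega'}+\|\partial_{v_{i}}f\|_{2,\Omega'}+\|\partial_{v_{i}v_{j}}f\|_{2,\Omega'}\big)+C_{2}\|f\|_{2,\Omega'}<\infty,
\]
so $\partial_{v_{i}}f\in L^{\frac{10}{3}}(\Omega')$. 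Since $\Omega\subset\Omega'$, this yields $\partial_{v_{i}}f\in L^{\frac{10}{3}}(\Omega)$, as claimed.

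There is no real obstacle here: the substance was already established in Theorem \ref{Theorem 3.4}, and the only point requiring a word of care is that Lemma \ref{Lemma 3.9 } is stated on a cylinder, which is why I first enclose the given relatively compact set $\Omega$ in a cylinder $\Omega'$ that is itself compactly contained in $\omega$ (the constants $C_{1},C_{2}$ then depend on $\Omega'$, but only finiteness of the right-hand side is needed). This corollary simply records the gain of integrability in the first-order spatial derivatives produced by the $W_{2}^{2,1}$-regularity, and it will serve as the entry point for the bootstrap argument in Section 4. $\square$
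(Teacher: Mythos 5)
Your proposal is correct and follows exactly the paper's own argument: apply Theorem \ref{Theorem 3.4} to get $f\in W_{2}^{2,1}$ locally, then invoke Lemma \ref{Lemma 3.9 } with $q=2$, $r=\tfrac{10}{3}$, noting $\tfrac12-\tfrac{3}{10}=\tfrac15$. The only addition is your (reasonable) care in first enclosing $\Omega$ in a cylinder, which the paper leaves implicit.
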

\begin{proof}
\begin{onehalfspace}
By Theorem \ref{Theorem 3.4} $f\in W_{2}^{2,1}(\Omega)$ and so taking
$q=2$ in Lemma \ref{Lemma 3.9 } we find that $\partial_{v_{i}}f\in L^{r}(\Omega)$
as long as $1\leq r\leq\frac{10}{3}$. 
\end{onehalfspace}
\end{proof}
\begin{onehalfspace}
In the next section we will iterate lemma \ref{Lemma 3.9 } in order
to show that $f$ lies locally in $W_{r}^{2,1}(\Omega)$ for arbitrary
$1\leq r<\infty$. 
\end{onehalfspace}
\begin{onehalfspace}

\section{From $\mathbf{\mathbf{W}_{2}^{2,1}}$ to $\mathbf{H_{\alpha+2}^{\ast}}$}
\end{onehalfspace}

\begin{onehalfspace}
We now wish to push further the main result obtained in the previous
section, by proving that $f$ is locally $H_{\alpha+2}^{\ast}$, which
in particular implies that $f$ is a classical solution to equation
(\ref{eq:1-1}). To finish the proof of theorem \ref{Theorem 1.1 }
we will need 
\end{onehalfspace}
\begin{thm}
\begin{onehalfspace}
\begin{flushleft}
\label{Theorem 4.5}\textup{ (\cite{15}, IV, Theorem 9.1)} Suppose
that $A_{ij}$ are locally uniformly elliptic and bounded continuous
on $\omega$. Suppose $\mathcal{F}\in L^{q}(\omega),1<q<\infty$.
Then the problem 
\par\end{flushleft}
\begin{flushleft}
\begin{equation}
\left\{ \begin{array}{cc}
\partial_{t}U-A_{ij}\partial_{i}\partial_{j}U=\mathcal{F} & \omega\\
U=0 & [T_{1},T_{2}]\times\partial B\\
U=0 & \{T_{1}\}\times B
\end{array}\right.\label{eq:7}
\end{equation}
\par\end{flushleft}
\begin{flushleft}
has a unique solution $U\in W_{q}^{2,1}(\omega)$. 
\par\end{flushleft}
\end{onehalfspace}
\end{thm}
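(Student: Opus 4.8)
Since this statement is Theorem~9.1 of Chapter~IV in \cite{key-7}, in the paper it is simply quoted; but if one wanted to reprove it, the plan would be the classical $L^{q}$ parabolic Calder\'on--Zygmund theory via freezing of coefficients together with the method of continuity. First I would dispose of the model operator $\partial_{t}-A^{0}_{ij}\partial_{i}\partial_{j}$ with $A^{0}$ a fixed constant symmetric positive-definite matrix. Its fundamental solution is an explicit Gaussian, and the second spatial derivatives and the time derivative of the corresponding volume potential are convolutions against kernels that are Calder\'on--Zygmund in the parabolic metric $\rho((t,x),(s,y))=\max\{|x-y|,|t-s|^{1/2}\}$; the parabolic Calder\'on--Zygmund theorem then gives $\|\partial^{2}_{v}U\|_{L^{q}}+\|\partial_{t}U\|_{L^{q}}\le C\|\mathcal{F}\|_{L^{q}}$ for $1<q<\infty$, with $C$ depending only on $q$ and the ellipticity of $A^{0}$.

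Next I would upgrade to the variable (merely continuous) coefficients $A_{ij}$ by a freezing argument. Using uniform continuity of $A_{ij}$ on $\overline{\omega}$, I would cover $\omega$ by finitely many small parabolic cylinders $Q_{k}$ on each of which the oscillation of $A$ is as small as desired, and on $Q_{k}$ rewrite the equation as $\partial_{t}U-A_{ij}(z_{k})\partial_{i}\partial_{j}U=\mathcal{F}+\bigl(A_{ij}(z_{k})-A_{ij}\bigr)\partial_{i}\partial_{j}U$. Multiplying by a cutoff, applying the constant-coefficient estimate, and absorbing the term $(A(z_{k})-A)\partial^{2}U$ into the left-hand side (its coefficient being small), then summing over $k$ and controlling the commutator terms produced by the cutoffs (which involve only $U$ and $\nabla_{v}U$, handled by interpolation), yields the global a priori bound
\[
\|U\|_{W_{q}^{2,1}(\omega)}\le C\bigl(\|\mathcal{F}\|_{L^{q}(\omega)}+\|U\|_{L^{q}(\omega)}\bigr).
\]

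To turn this into the theorem I would first remove the zeroth-order term $\|U\|_{L^{q}}$ using uniqueness: for $q\ge 2$ a $W_{q}^{2,1}$ solution of the homogeneous problem on the bounded cylinder lies in $W_{2}^{1,0}(\omega)$, hence vanishes by Theorem~\ref{Theorem 3.8} (alternatively one argues directly by an energy or duality estimate), and a routine compactness argument then improves the estimate to $\|U\|_{W_{q}^{2,1}}\le C\|\mathcal{F}\|_{L^{q}}$; the range $1<q<2$ follows afterwards by duality against the adjoint problem. Existence for the given $A_{ij}$ I would obtain by the method of continuity along $A^{\tau}_{ij}=(1-\tau)\delta_{ij}+\tau A_{ij}$: the a priori estimate is uniform in $\tau\in[0,1]$, the endpoint $\tau=0$ (the heat equation with zero lateral and initial data) is solvable by the constant-coefficient step together with a standard Galerkin or semigroup construction, and so $\tau=1$ is solvable as well; uniqueness is as above.

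The hard part is really contained in the first step and in the absorption of the second: one must check that the second-derivative parabolic potential kernels genuinely satisfy the size and cancellation (H\"ormander) conditions in the anisotropic metric, and that the localization commutators are honestly of lower order so that the interpolation absorbs them. All of this is carried out in detail in \cite{key-7}, Chapter~IV, which is why in the present paper the result is invoked rather than reproved.
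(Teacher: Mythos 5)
The paper gives no proof of this statement: it is quoted verbatim from Ladyzhenskaya--Solonnikov--Uraltseva (Chapter IV, Theorem 9.1), so there is no in-paper argument to compare against. Your sketch --- constant-coefficient parabolic Calder\'on--Zygmund estimates for the Gaussian potential kernels, freezing of the continuous coefficients with absorption of the small oscillation term, removal of the lower-order term via uniqueness, and the method of continuity for existence --- is precisely the strategy of the cited reference and is correct in outline.
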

\begin{lem}
\begin{onehalfspace}
\begin{flushleft}
\textup{(\cite{15}, Page 343)} \label{Lemma 4.5 } Let $5<q<\infty,$
$0<\alpha<1-\frac{5}{q}$ and let the conditions of theorem \ref{Theorem 4.5}
hold. The solution $u\in W_{q}^{2,1}(\omega)$ of equation (\ref{eq:7})
has $u\in H^{1+\alpha,\frac{1+\alpha}{2}}(\omega)$. 
\par\end{flushleft}
\end{onehalfspace}

\end{lem}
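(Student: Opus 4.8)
The plan is to read this statement as a parabolic Sobolev embedding. By Theorem \ref{Theorem 4.5}, under the stated hypotheses the problem (\ref{eq:7}) has a solution $u\in W_{q}^{2,1}(\omega)$, so the content of the lemma is the purely function-theoretic inclusion
\[
W_{q}^{2,1}(\omega)\hookrightarrow H^{1+\alpha,\frac{1+\alpha}{2}}(\overline{\omega}),\qquad 0<\alpha<1-\tfrac{5}{q},
\]
valid whenever $q>5$; the equation and the coefficients $A_{ij}$ play no further role, serving only to produce $u$ in the first place. This is the parabolic analogue of Morrey's inequality, with $n+2=5$ playing the role of the homogeneous parabolic dimension: two parabolic derivatives in $L^{q}$ with $q$ above the critical exponent $5$ buys one full spatial derivative together with a parabolic H\"older modulus of order $1-\tfrac{5}{q}$.

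To establish the embedding I would use the caloric potential representation. After multiplying by a cutoff and exploiting the vanishing lateral and initial data, extend $u$ to an element of $W_{q}^{2,1}(\mathbb{R}\times\mathbb{R}^{3})$ with compact support (or invoke a Sobolev extension operator for $W_{q}^{2,1}$ on a cylinder directly). Set $g\coloneqq\partial_{t}u-\Delta_{v}u\in L^{q}(\mathbb{R}\times\mathbb{R}^{3})$; then $u=\Gamma\ast g$ with $\Gamma$ the heat kernel, so $\partial_{v_{i}}u=(\partial_{v_{i}}\Gamma)\ast g$. The kernel $\partial_{v_{i}}\Gamma$ is smooth off the origin and parabolically homogeneous of degree $-(n+1)=-4$, hence of Riesz type with effective order $\gamma=1$ for the parabolic dimension $5$; convolution of such a kernel with an $L^{q}$ function, $q>5$, lands in $H^{\beta,\frac{\beta}{2}}(\overline{\omega})$ with $\beta=1-\tfrac{5}{q}$, by the standard parabolic H\"older--Morrey estimate for potentials. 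The same computation with the less singular kernel $\Gamma$ itself shows $u$ is spatially $C^{1}$ with the matching moduli, and together these give $u\in H^{1+\beta,\frac{1+\beta}{2}}(\overline{\omega})$, so that $u\in H^{1+\alpha,\frac{1+\alpha}{2}}(\omega)$ for every $\alpha<\beta$.

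An alternative route closer to the machinery already in the paper is to iterate Lemma \ref{Lemma 3.9 } to place $\partial_{v_{i}}u$ in $L^{s}(\omega)$ for arbitrarily large $s<\infty$, use the equation in the form $\partial_{t}u=A_{ij}\partial_{ij}u+\mathcal{F}\in L^{q}$ to control the time derivative, and then appeal to the parabolic embedding $W_{q}^{2,1}\hookrightarrow H^{1+\alpha,\frac{1+\alpha}{2}}$ (\cite{key-7}, Ch.~II). I expect the genuine subtlety to be the \emph{time} modulus of $\nabla_{v}u$: spatial H\"older continuity of $u$ and of $\nabla_{v}u$ on almost every time slice is immediate from the ordinary Morrey inequality applied to $u(t,\cdot)$ and $\partial_{v_{i}}u(t,\cdot)$, but producing a uniform-in-$t$ bound and the correct exponent $\tfrac{1+\alpha}{2}$ in time genuinely couples the two variables and forces the anisotropic parabolic scaling, which is precisely what the caloric-potential estimate encodes. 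All constants depend only on $q$, $\omega$ and, through Theorem \ref{Theorem 4.5}, on the ellipticity and continuity bounds for $A_{ij}$ and on $\|\mathcal{F}\|_{L^{q}(\omega)}$, which is all that is needed since $\omega$ and $A_{ij}$ are fixed here.
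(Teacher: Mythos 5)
The paper offers no proof of this lemma: it is quoted directly from Ladyzhenskaya--Solonnikov--Uraltseva, p.~343, where the conclusion is obtained by combining the solvability theorem (Theorem 9.1 of Ch.~IV) with the embedding lemma for $W_q^{2,1}$ (Lemma 3.3 of Ch.~II). Your reading of the statement as the purely function-theoretic parabolic Morrey embedding $W_q^{2,1}(\omega)\hookrightarrow H^{1+\alpha,\frac{1+\alpha}{2}}(\overline{\omega})$ for $q>5=n+2$, with the equation serving only to place $u$ in $W_q^{2,1}$, is exactly right, and your caloric-potential argument is the standard proof of that embedding, so the proposal is correct and follows essentially the route of the cited source.
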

\begin{onehalfspace}
In addition we need to slightly refine the conclusion of 2.i of lemma
\ref{Lemma 3.6} by showing that $\overline{a_{ij}}^{f}$ are H\"older
continuous in space uniformly with respect to time. 
\end{onehalfspace}
\begin{lem}
\begin{onehalfspace}
\begin{flushleft}
\label{Lemma 4.4} Suppose $f\in H^{\alpha,\frac{\alpha}{2}}(\omega)$.
Let $\Omega\Subset\omega$. Then there is some $C>0$ such that for
all $(t,v_{1}),(t,v_{2})\in\Omega$ it holds that $|\overline{a_{ij}}^{f}(t,v_{1})-\overline{a_{ij}}^{f}(t,v_{2})|\leq C|v_{1}-v_{2}|^{\alpha}$.
In particular $\overline{a_{ij}}^{f}\in H_{\alpha}^{\ast}(\overline{\Omega})$. 
\par\end{flushleft}
\end{onehalfspace}
\end{lem}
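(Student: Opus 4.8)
\textbf{Proof strategy for Lemma \ref{Lemma 4.4}.} The plan is to split the convolution defining $\overline{a_{ij}}^{f}$ into a singular near-diagonal piece and a smooth tail, exactly as in step 2.i of Lemma \ref{Lemma 3.6}, and then estimate the modulus of continuity in $v$ of each piece separately, with constants that are uniform in $t\in J$. Fix $\Omega=(T_1,T_2)\times B'\Subset\omega$ and choose $\epsilon>0$ small enough that the $2\epsilon$-neighbourhood $B''$ of $B'$ is still $\Subset B$. Pick $\chi\in C_0^\infty(\mathbb{R}^3)$ with $\chi\equiv1$ on $B_\epsilon(0)$ and $\operatorname{supp}\chi\Subset B_{2\epsilon}(0)$, and write, for $v\in B'$,
\[
\overline{a_{ij}}^{f}(t,v)=\underset{\mathbb{R}^3}{\int}\chi(v-w)a_{ij}(v-w)f(t,w)\,dw+\underset{\mathbb{R}^3}{\int}(1-\chi(v-w))a_{ij}(v-w)f(t,w)\,dw\coloneqq G_t(v)+H_t(v).
\]

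\textbf{The tail term $H_t$.} Here the kernel $w\mapsto(1-\chi(v-w))a_{ij}(v-w)$ is $C^\infty$ with all derivatives bounded uniformly in $v\in B'$ (it vanishes near the singularity of $a_{ij}$), and $a_{ij}$ together with its first derivatives decays like $|z|^{-1},|z|^{-2}$ at infinity. Consequently the difference $H_t(v_1)-H_t(v_2)$ is controlled by $|v_1-v_2|$ times an $L^\infty_v L^1_w$ bound on $(1+|\nabla|)$ of the kernel against $f(t,\cdot)$, and since $\sup_t\|f(t,\cdot)\|_{L^1}=M_0$ is finite (mass is conserved for H-solutions), one gets $|H_t(v_1)-H_t(v_2)|\leq C\,|v_1-v_2|\leq C'|v_1-v_2|^\alpha$ on the bounded set $\overline{B'}$, with $C'$ independent of $t$.

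\textbf{The near-diagonal term $G_t$.} This is the main obstacle: the kernel $\chi a_{ij}$ is only $\sim|z|^{-1}$, not Lipschitz, so one cannot simply differentiate under the integral. Instead I would use the assumed spatial H\"older regularity of $f$: writing $G_t(v)=\int_{|z|\le2\epsilon}\chi(z)a_{ij}(z)f(t,v-z)\,dz$, we have
\[
G_t(v_1)-G_t(v_2)=\underset{|z|\le2\epsilon}{\int}\chi(z)a_{ij}(z)\bigl(f(t,v_1-z)-f(t,v_2-z)\bigr)\,dz,
\]
and since $f\in H^{\alpha,\frac{\alpha}{2}}(\omega)$ (with $v_1-z,v_2-z$ ranging in the fixed compact $B''$ for the relevant $z$, after shrinking $\epsilon$ if necessary so that $B'+B_{2\epsilon}\Subset\omega$), the integrand is bounded by $\|f\|_{H^{\alpha,\alpha/2}(\omega)}\,|v_1-v_2|^\alpha\,|z|^{-1}$; the integral $\int_{|z|\le2\epsilon}|z|^{-1}\,dz<\infty$ in $\mathbb{R}^3$, so $|G_t(v_1)-G_t(v_2)|\le C\,|v_1-v_2|^\alpha$ with $C$ depending only on $\epsilon$ and $\|f\|_{H^{\alpha,\alpha/2}(\omega)}$, again uniformly in $t$. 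Adding the two estimates gives the claimed bound. For the final sentence, combining this spatial H\"older bound with the $L^\infty_{\mathrm{loc}}(\omega)$ bound on $\overline{a_{ij}}^{f}$ from Lemma \ref{Lemma 3.6} (which gives $\sup_{P\in\Omega}d_P^{0}|\overline{a_{ij}}^{f}(P)|<\infty$) and recalling that the $H_l^\ast$-seminorm only involves increments along coordinate directions within $\Omega$ — hence in the spatial variable only, for which the metric $|P-Q|$ reduces to $|v^P-v^Q|$ — we conclude $\overline{a_{ij}}^{f}\in H_\alpha^\ast(\overline\Omega)$.
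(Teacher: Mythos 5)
Your proposal is correct and follows essentially the same route as the paper: the same $\chi$-cutoff decomposition into a near-diagonal piece (estimated via the spatial H\"older continuity of $f$ and the local integrability of $|z|^{-1}$ in $\mathbb{R}^3$) and a smooth tail (estimated via the mean value theorem with a gradient bound uniform in $t$, using conservation of mass). Your closing remarks on why the spatial H\"older bound plus the local $L^\infty$ bound suffice for membership in $H_\alpha^{\ast}(\overline{\Omega})$ are a slightly more explicit justification of the paper's ``in particular'' step, but the argument is the same.
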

\begin{proof}
\begin{onehalfspace}
With the same notation of lemma \ref{Lemma 3.6} we have 

\[
\mathbf{1}_{B'}(v)\overline{a_{ij}}^{f}(t,v)=\underset{B''}{\int}\chi(v-w)a_{ij}(v-w)f(t,w)dw+\mathbf{1}_{B'}\underset{\mathbb{R}^{3}}{\int}(1-\chi(v-w))a_{ij}(v-w)f(t,w)dw\coloneqq g_{t}(\cdot)+h_{t}(\cdot).
\]

Let $B_{v}''$ be the ball centered at $v$ with the same radius as
$B''$. 

\[
|g_{t}(v_{1})-g_{t}(v_{2})|\leq\underset{B_{v}''}{\int}|\chi(w)a_{ij}(w)||f(t,v_{1}-w)-f(t,v_{2}-w)|dw\leq C_{1}|v_{1}-v_{2}|^{\alpha}.
\]

In addition by the mean value theorem 

\[
|h_{t}(v_{1})-h_{t}(v_{2})|\leq||\nabla h_{t}(\xi)||\times|v_{1}-v_{2}|\leq C_{2}|v_{1}-v_{2}|,
\]

where $\xi$ is an intermidiate point and the second inequality is
because $||\nabla h_{t}||_{\infty}$ is easily seen to be bounded
indepnedently of $t$.
\end{onehalfspace}
\end{proof}
\begin{onehalfspace}
At the last step of the proof we will apply the following parabolic
version of interior Schauder estimates. 
\end{onehalfspace}
\begin{thm}
\begin{onehalfspace}
\begin{flushleft}
\label{Theorem 4.6 } \textup{(\cite{14}, Theorem 1)} Suppose:
\par\end{flushleft}
\begin{flushleft}
1. There are constants $\nu,\mu>0$ such that for all $\xi\in\mathbb{R}^{3}$
it holds that $\nu|\xi|^{2}\leq A_{ij}(t,v)\xi_{i}\xi_{j}\leq\mu|\xi|^{2}$
for all $(t,v)\in\omega$. 
\par\end{flushleft}
\begin{flushleft}
2. There is a constant $\kappa>0$ such that $||A_{ij}||_{H_{\alpha}^{\ast}(\overline{\omega})}\leq\kappa$
and $||d^{2}\mathcal{F}||_{\alpha}^{\ast}<\infty$. 
\par\end{flushleft}
\begin{flushleft}
If $u\in H_{\alpha+2}^{\ast}(\overline{\omega})$ is a solution to
the equation 
\par\end{flushleft}
\begin{flushleft}
\[
\partial_{t}u-A_{ij}\partial_{i}\partial_{j}u=\mathcal{F},
\]
\par\end{flushleft}
\begin{flushleft}
then $||u||_{H_{\alpha+2}^{\ast}(\overline{\omega})}\leq c(||d^{2}\mathcal{F}||_{H_{\alpha}^{\ast}(\overline{\omega})}+||u||_{0})$
where $c=c(\Omega,\kappa,\nu,\alpha)$. 
\par\end{flushleft}
\end{onehalfspace}

\end{thm}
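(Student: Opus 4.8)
The plan is to prove this weighted interior parabolic Schauder estimate (a standard result, e.g. in the Ladyzhenskaya--Solonnikov--Uraltseva circle) by the classical freezing--of--coefficients method, the weights $d_P$ serving precisely to convert a genuinely interior estimate into a clean global bound on $\overline{\omega}$. The first step is a geometric reduction. Fix $P_0=(t_0,x_0)\in\omega$, set $d:=d_{P_0}$, and work on the parabolic cylinder $Q_d(P_0):=\{(t,x):|x-x_0|<d,\ t_0-d^2<t<t_0\}$, which (after shrinking $d$ by a fixed fraction) lies in $\omega$. The rescaling $\tilde u(s,y):=u(t_0+d^2s,\,x_0+dy)$ solves $\partial_s\tilde u-\tilde A_{ij}\partial_i\partial_j\tilde u=g$ on $Q_1(0)$ with $g:=d^2\tilde{\mathcal F}$, where $\tilde A_{ij}$ remains $\nu$--$\mu$-elliptic and, crucially, $[\tilde A]_{C^{\alpha,\alpha/2}(Q_1)}\le d^\alpha[A]_{C^{\alpha,\alpha/2}(Q_d(P_0))}\le\kappa$ --- the factor $d^\alpha$ produced by rescaling is exactly what the weight $d_{PQ}^\alpha$ in $\|\cdot\|_\alpha^*$ pays for. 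Tracking the weights, $\|u\|_{\alpha+2}^*$ is comparable to $\sup_{P_0}\|\tilde u\|_{C^{2+\alpha,1+\alpha/2}(Q_{1/2}(0))}$ and $\|d^2\mathcal F\|_\alpha^*$ controls $\sup_{P_0}\|g\|_{C^{\alpha,\alpha/2}(Q_1(0))}$ (one also uses here that the direction-restricted ``starred'' seminorm is equivalent, up to a dimensional factor, to the full H\"older seminorm, since any two points of a cylinder are joined by $n$ axis-parallel segments each of length $\le|P-Q|_\infty\asymp|P-Q|$, over which the oscillation is controlled segment by segment). So it suffices to establish the \emph{scale-invariant interior estimate} $\|\tilde u\|_{C^{2+\alpha,1+\alpha/2}(Q_{1/2})}\le c\big(\|g\|_{C^{\alpha,\alpha/2}(Q_1)}+\|\tilde u\|_{C^0(Q_1)}\big)$ with $c=c(n,\nu,\mu,\kappa,\alpha)$, and then sum over $P_0$, which works because $d_P\asymp d_Q$ whenever $P,Q$ are parabolically close on the scale of $d_P$.

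For the flat estimate I would use the Campanato characterization of parabolic H\"older spaces: $v\in C^{\alpha,\alpha/2}$ on a cylinder iff $\sup_{z_0,\rho}\rho^{-(n+2+2\alpha)}\int_{Q_\rho(z_0)}|v-(v)_{Q_\rho(z_0)}|^2<\infty$, where $n+2$ is the parabolic homogeneous dimension. Freeze the coefficients at an interior point $z_0$, put $A^0:=\tilde A(z_0)$, and on a subcylinder $Q_r(z_0)\subset Q_{3/4}$ decompose $\tilde u=h+w$: let $h$ solve the constant-coefficient problem $\partial_s h-A^0_{ij}\partial_i\partial_j h=(g)_{Q_r(z_0)}$ with $h=\tilde u$ on the parabolic boundary of $Q_r(z_0)$, and let $w$ have zero parabolic boundary data and right-hand side $\big(g-(g)_{Q_r(z_0)}\big)+\big(\tilde A_{ij}-A^0_{ij}\big)\partial_i\partial_j\tilde u$. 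Since the equation for $h$ has constant right-hand side, $D^2h$ and $\partial_s h-(g)_{Q_r}$ solve the homogeneous constant-coefficient equation, so the interior derivative bounds for that operator (available from its explicit fundamental solution, or a Bernstein argument) give, for $\rho\le r/2$, $\int_{Q_\rho}|D^2h-(D^2h)_{Q_\rho}|^2\le C(\rho/r)^{n+4}\int_{Q_r}|D^2h-(D^2h)_{Q_r}|^2$, and the analogue for $\partial_s h$. For $w$, the constant-coefficient parabolic maximal $L^2$-regularity gives $\|D^2w\|_{L^2(Q_r)}+\|\partial_sw\|_{L^2(Q_r)}\le C r^{(n+2)/2}\big(r^\alpha[g]_{C^\alpha(Q_1)}+\kappa\,r^\alpha\|D^2\tilde u\|_{C^0(Q_{3/4})}\big)$, using $|g-(g)_{Q_r}|\lesssim r^\alpha[g]_{C^\alpha}$ and $|\tilde A-A^0|\le\kappa r^\alpha$ on $Q_r$. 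Writing $\Phi(\rho):=\int_{Q_\rho(z_0)}|D^2\tilde u-(D^2\tilde u)_{Q_\rho(z_0)}|^2$ and combining (triangle inequality and minimality of the means) yields, for $\rho\le r/2$,
\[
\Phi(\rho)\le C\Big(\tfrac{\rho}{r}\Big)^{n+4}\Phi(r)+C\,r^{\,n+2+2\alpha}M^2,\qquad M^2:=[g]_{C^\alpha(Q_1)}^2+\kappa^2\|D^2\tilde u\|_{C^0(Q_{3/4})}^2,
\]
together with the identical inequality for the quantity built from $\partial_s\tilde u$.

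Because $n+4>n+2+2\alpha$ when $\alpha<1$, the standard iteration lemma promotes this to $\Phi(\rho)\le C\rho^{\,n+2+2\alpha}\big(\|D^2\tilde u\|_{L^2(Q_1)}^2+M^2\big)$, i.e. $[D^2\tilde u]_{C^{\alpha,\alpha/2}(Q_{1/2})}\le C\big(\|D^2\tilde u\|_{C^0(Q_{3/4})}+[g]_{C^\alpha(Q_1)}\big)$. The remaining $\|D^2\tilde u\|_{C^0(Q_{3/4})}$ is killed by the interpolation inequality $\|D^2\tilde u\|_{C^0(Q_{3/4})}\le\varepsilon[D^2\tilde u]_{C^{\alpha,\alpha/2}(Q_{3/4})}+C_\varepsilon\|\tilde u\|_{C^0(Q_1)}$ with $\varepsilon$ small (of size $\sim\kappa^{-1}$, so the $\kappa^2\varepsilon^2$ contribution is absorbed on the left); feeding the resulting $C^\alpha$ bound on $D^2\tilde u$ back into $\partial_s\tilde u=\tilde A_{ij}\partial_i\partial_j\tilde u+g$ gives the same bound for $\partial_s\tilde u$, and interpolation controls $\tilde u$ and $D\tilde u$ in $C^0$; this is the flat estimate. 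Finally, undo the rescaling, multiply by the appropriate power of $d_{P_0}$, and take the supremum over $P_0$ to recover $\|u\|_{H^*_{\alpha+2}(\overline{\omega})}\le c\big(\|d^2\mathcal F\|_{H^*_\alpha(\overline{\omega})}+\|u\|_0\big)$.

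I expect the weight bookkeeping in this last assembly to be the main obstacle, not the PDE content. One must check carefully that the $d_P$-weighted starred norms really are comparable to $\sup_{P_0}$ of the rescaled unweighted norms; this rests on (i) the regularized-distance property $d_P\asymp d_Q$ when $P,Q$ are parabolically close relative to $d_P$ (so the cylinders $Q_{d_P}(P)$ have bounded overlap and the rescaled $[\tilde A]$ are uniformly $\le\kappa$), (ii) the compatibility of the anisotropic metric $|P-Q|=\max\{|x^P-x^Q|,8|t^P-t^Q|^{1/2}\}$ with rescaling space by $d$ and time by $d^2$ (which is exactly why the time scaling is $d^2$ and the constant $8$ is built into the metric), and (iii) reducing the direction-restricted seminorms in $H^*_\alpha$ and in $\|d^2\mathcal F\|^*_\alpha$ to full H\"older seminorms via axis-parallel polygonal paths, splitting the weight $d_{PQ}$ along the path into a local part (inside $Q_{d_P/2}(P)$, where it is comparable to $d_P$) and a trivial far part; point (iii) also shows that, despite the hypothesis only postulating the starred norm of $\mathcal F$, the full weighted $C^\alpha$ control of $g$ needed in the Campanato iteration is available. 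The model-case ingredients --- interior derivative estimates and $L^2$-regularity for the constant-coefficient heat-type operator --- are completely standard, and the iteration and interpolation lemmas are routine, so the essential work is the transition between the ``flat'' and the weighted formulations.
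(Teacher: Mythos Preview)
The paper does not supply a proof of this statement: Theorem~4.6 is quoted verbatim from Knerr's paper \cite{key-12} and used as a black box. So there is no ``paper's own proof'' to compare against. That said, the cited source proves the estimate by a different route: as its title indicates, Knerr derives the interior Schauder bound via the maximum principle (iterated comparison with polynomial barriers), not through Campanato spaces. Your freezing-of-coefficients plus Campanato characterisation argument is the other standard route and is perfectly sound in outline; it has the advantage of being more robust (it generalises readily to systems and to $L^p$-based Schauder theory), while Knerr's maximum-principle proof is shorter and more elementary for scalar equations but does not extend to systems.

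One point in your write-up deserves care. In the absorption step you bound $[D^2\tilde u]_{C^{\alpha,\alpha/2}(Q_{1/2})}$ in terms of $\|D^2\tilde u\|_{C^0(Q_{3/4})}$, and then invoke interpolation to control $\|D^2\tilde u\|_{C^0(Q_{3/4})}$ by $\varepsilon[D^2\tilde u]_{C^{\alpha,\alpha/2}(Q_{3/4})}+C_\varepsilon\|\tilde u\|_{C^0(Q_1)}$. The domains do not match: you only controlled the seminorm on $Q_{1/2}$, not on $Q_{3/4}$, so the $\varepsilon$-term cannot be absorbed as written. The fix is the standard increasing-domain (Simon) trick: run the whole argument on a generic pair $Q_\theta\subset Q_{\theta'}$ with $1/2\le\theta<\theta'\le1$, keep track of the dependence on $(\theta'-\theta)$, and iterate over a sequence of nested radii to close the loop. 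This is routine, but without it the absorption fails.
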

\begin{onehalfspace}
Using the above Schauder estimates we can obtain the following uniqueness
and existence result. The proof is based on a standard method of continuity
argument. We include it here only for the sake of completeness, since
it does not appear explicitly in the literature.

\end{onehalfspace}\begin{cor}
\begin{onehalfspace}
\begin{flushleft}
\label{Corollary 4.5} Let the conditions 1+2 of theorem \ref{Theorem 4.6 }
hold. Write $\omega=(T_{1},T_{2})\times B$. The equation 
\par\end{flushleft}
\begin{flushleft}
\begin{equation}
\left\{ \begin{array}{cc}
\partial_{t}u-A_{ij}\partial_{i}\partial_{j}u=\mathcal{F} & (T_{1},T_{2})\times B\\
u=0 & [T_{1},T_{2}]\times\partial B\\
u=0 & \{T_{1}\}\times B
\end{array}\right.\label{eq:-1}
\end{equation}
\par\end{flushleft}
\begin{flushleft}
has a unique $H_{\alpha+2}^{\ast}(\overline{\omega})$ solution. 
\par\end{flushleft}
\end{onehalfspace}
\end{cor}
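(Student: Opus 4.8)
\textbf{Proof proposal for Corollary \ref{Corollary 4.5}.}

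The plan is to use the method of continuity, taking the heat operator as the endpoint of a one-parameter family joining it to $\partial_t - A_{ij}\partial_i\partial_j$. First I would introduce, for $\sigma\in[0,1]$, the operators $L_\sigma u = \partial_t u - \big((1-\sigma)\delta_{ij} + \sigma A_{ij}\big)\partial_i\partial_j u$. Each coefficient matrix $(1-\sigma)\delta_{ij}+\sigma A_{ij}$ inherits, uniformly in $\sigma$, the ellipticity bounds with constants $\min\{1,\nu\}$ and $\max\{1,\mu\}$ from condition~1, and the H\"older bound $\|(1-\sigma)\delta_{ij}+\sigma A_{ij}\|_{H_\alpha^\ast(\overline\omega)}\leq 1+\kappa$ from condition~2. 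Hence Theorem \ref{Theorem 4.6 } applies to every $L_\sigma$ with a single constant $c$ independent of $\sigma$: any solution $u\in H_{\alpha+2}^\ast(\overline\omega)$ of $L_\sigma u=\mathcal F$ with the stated boundary and initial conditions obeys $\|u\|_{H_{\alpha+2}^\ast(\overline\omega)}\leq c(\|d^2\mathcal F\|_{H_\alpha^\ast(\overline\omega)}+\|u\|_0)$. To turn this into a genuine a priori estimate I would absorb the lower-order term $\|u\|_0$: since $u$ vanishes on the parabolic boundary, a maximum-principle bound (or a direct energy estimate, using ellipticity and $u|_{t=T_1}=0$, $u|_{\partial B}=0$) gives $\|u\|_0\leq C\|\mathcal F\|_{L^\infty}\leq C'\|d^2\mathcal F\|_{H_\alpha^\ast(\overline\omega)}$, so that altogether
\[
\|u\|_{H_{\alpha+2}^\ast(\overline\omega)}\leq C\,\|d^2\mathcal F\|_{H_\alpha^\ast(\overline\omega)}
\]
with $C$ independent of $\sigma$.

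Next I would set up the functional-analytic frame for continuity. Let $X$ be the closed subspace of $H_{\alpha+2}^\ast(\overline\omega)$ consisting of functions satisfying $u=0$ on $[T_1,T_2]\times\partial B$ and on $\{T_1\}\times B$, and let $Y=\{\mathcal F : \|d^2\mathcal F\|_{H_\alpha^\ast(\overline\omega)}<\infty\}$, both Banach. Each $L_\sigma:X\to Y$ is bounded (the $H_\alpha^\ast$ bound on the coefficients ensures $L_\sigma$ maps $H_{\alpha+2}^\ast$ boundedly into the $d^2$-weighted $H_\alpha^\ast$ space, by the product estimates for these weighted H\"older norms), and $\sigma\mapsto L_\sigma$ is norm-continuous, indeed affine, in $\sigma$. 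The uniform a priori estimate above says $\|u\|_X\leq C\|L_\sigma u\|_Y$ for all $\sigma$, i.e.\ every $L_\sigma$ is injective with closed range. The method of continuity then asserts that the set $\{\sigma\in[0,1] : L_\sigma \text{ is onto}\}$ is both open and closed in $[0,1]$; since it contains $\sigma=0$ — there $L_0 = \partial_t-\Delta$ is the heat operator with zero boundary and initial data, which is surjective onto $Y$ because $Y\subset L^q(\omega)$ for any $q<\infty$ and classical parabolic theory (Theorem \ref{Theorem 4.5} with $A_{ij}=\delta_{ij}$, or directly Lemma \ref{Lemma 4.5 }) produces a $W_q^{2,1}$ solution which the Schauder estimate for $L_0$ then upgrades to $H_{\alpha+2}^\ast(\overline\omega)$ — it follows that $\sigma=1$ lies in the set, i.e.\ $L_1=\partial_t-A_{ij}\partial_i\partial_j$ is onto. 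This gives existence; uniqueness is immediate from the a priori estimate at $\sigma=1$, since $L_1 u=0$ forces $\|u\|_{H_{\alpha+2}^\ast}=0$.

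The one genuine subtlety — and the step I expect to be the main obstacle — is the surjectivity of the endpoint $L_0$ onto the exact space $Y$: one must produce a solution of the heat equation lying in $H_{\alpha+2}^\ast(\overline\omega)$, not merely in $W_q^{2,1}(\omega)$, starting only from $\mathcal F$ with $\|d^2\mathcal F\|_{H_\alpha^\ast}<\infty$ (a weighted condition that does \emph{not} imply $\mathcal F$ is globally H\"older up to the lateral boundary, only interior-weighted). The clean way around this is to run the continuity argument not on the fixed cylinder $\omega$ but to first solve on a slightly larger cylinder where $\mathcal F$ can be taken smoother, or to observe that the weight $d$ is bounded below on compact subsets so that the $H_{\alpha+2}^\ast$ statement is, away from the parabolic boundary, an ordinary interior Schauder statement — and near the boundary the $d^2$ and $d^{2+\alpha}$ weights degenerate precisely so as to cancel the possible blow-up of derivatives, which is exactly what Theorem \ref{Theorem 4.6 } is designed to track. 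Concretely I would invoke Theorem \ref{Theorem 4.5} to get $U\in W_q^{2,1}(\omega)$ for $q>5$, note $U\in X$ after checking the boundary/initial traces, and then apply the Schauder estimate for $L_0$ (conditions 1+2 trivially hold for the identity matrix) to conclude $U\in H_{\alpha+2}^\ast(\overline\omega)$; the finiteness of the right-hand side $\|d^2\mathcal F\|_{H_\alpha^\ast(\overline\omega)}$ is guaranteed by hypothesis. Once $L_0$ is known surjective, the rest is the soft machinery described above.
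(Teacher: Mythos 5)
Your proof is correct and follows essentially the same route as the paper: the affine family $L_{\sigma}u=\partial_{t}u-((1-\sigma)\delta_{ij}+\sigma A_{ij})\partial_{i}\partial_{j}u$ interpolating between the heat operator and $\partial_{t}-A_{ij}\partial_{i}\partial_{j}$, the $\sigma$-uniform Schauder estimate from theorem \ref{Theorem 4.6 } combined with the maximum principle to absorb $\|u\|_{0}$, and the method of continuity starting from the classical solvability of the heat equation at $\sigma=0$. The only difference is that you spell out the surjectivity of the endpoint $L_{0}$ onto the weighted H\"older space in more detail, a point the paper simply asserts as classical.
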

\begin{proof}
\begin{onehalfspace}
Denote $L=\partial_{t}u-\overline{a}_{ij}^{f}\partial_{i}\partial_{j}u$.
Consider the Banach spaces $\mathfrak{B}_{1}=H_{\alpha+2}^{\ast}(\overline{\omega})\cap\{u=0$
on $[T_{1},T_{2}]\times\partial B\cup\{T_{1}\}\times B\}$ and $\mathfrak{B}_{2}=H_{\alpha}^{\ast}(\overline{\omega})$.
For each $0\leq s\leq1$ consider the operator $L_{s}:\mathfrak{B}_{1}\rightarrow\mathfrak{B}_{2}$
defined by $L_{s}u=sLu+(1-s)(\partial_{t}u-\Delta u)$. The solvability
of equation (\ref{eq:-1}) for arbitrary $\mathcal{F}\in H_{\alpha}^{\ast}(\overline{\omega})$
is equivalent to the fact that $L_{1}$ is onto. Let $u\in\mathfrak{B}_{1}$
and write $L_{s}u=\mathscr{\mathcal{F}}\in H_{\alpha}^{\ast}(\overline{\omega})$
. By theorem \ref{Theorem 4.6 } 

we have 
\[
||u||_{\mathfrak{B}_{1}}=||u||_{H_{\alpha+2}^{\ast}(\overline{\omega})}\leq c(||d^{2}\mathcal{F}||_{\alpha}^{\ast}+||u||_{0})\leq c(||\mathcal{F}||_{H_{\alpha}^{\ast}(\overline{\omega})}+\underset{\omega}{\sup}|\mathcal{F}|)\leq c||\mathcal{F}||_{H_{\alpha}^{\ast}(\overline{\omega})}=c||L_{s}u||_{H_{\alpha}^{\ast}(\overline{\omega})},
\]
 where the second inequality is by the maximum principle and $c$
is independent of $s$. Indeed, note that if $\nu$ stands for the
lower ellipticity constant of $L$, then for each $s$ the lower ellipticity
constant of $L_{s}$ can be taken to be $\nu_{s}=\min(1,\nu)$, which
is independent of $s$. It is also apparent that the $H_{\alpha}^{\ast}(\overline{\omega})$
norm of the coefficients of the second order derivatives of $L_{s}$
is bounded independently of $s$. Therefore the constant $c$ from
theorem \ref{Theorem 4.6 } can be taken to be the same for all the
$L_{s}$. Furthermore, it is classical that the heat equation 

\[
\left\{ \begin{array}{cc}
\partial_{t}u-\Delta u=\mathcal{F} & (T_{1},T_{2})\times B\\
u=0 & [T_{1},T_{2}]\times\partial B\\
u=0 & \{T_{1}\}\times B
\end{array}\right.
\]

has a $H_{\alpha+2}^{\ast}(\overline{\omega})$ solution provided
$\mathcal{F}\in H_{\alpha}^{\ast}(\overline{\omega})$ (in fact less
regularity on $\mathcal{F}$ is required here). Otherwise put, $L_{0}$
is onto $\mathfrak{B}_{2}$. By the method of continuity (see e.g.
theorem 5.2 in \cite{9}) it follows that $L_{1}$ is onto, as desired.
Uniqueness of the solution is a consequence of the maximum principle. 
\end{onehalfspace}
\end{proof}
\begin{onehalfspace}
\textit{Proof of theorem }\ref{Theorem 1.1 }. 1. Given a cylinder
$(T_{1},T_{2})\times B'\Subset\omega$ pick a cylinder $(T_{1},T_{2})\times B'\Subset(T_{1}',T_{2}')\times B''\Subset\omega$.
Set $\omega'=(T_{1}',T_{2}')\times B''$. We localize the solution
$f$ by defining $u=\zeta f$ for some $\zeta\in C_{0}^{\infty}((T_{1}',T_{2}')\times B'')$
satisfying $\zeta\equiv1$ on $(T_{1},T_{2})\times B'$. First we
wish to show that $u$ satisfy an equation of the form (\ref{eq:7}).
As in theorem \ref{Theorem 3.4} we get that $u$ satisfies the equation 

\begin{equation}
\left\{ \begin{array}{cc}
\partial_{t}u-\overline{a}_{ij}^{f}\partial_{i}\partial_{j}u=\mathcal{F} & \omega'\\
u=0 & [T_{1}',T_{2}']\times\partial B''\\
u=0 & \{T_{1}'\}\times B''
\end{array}\right.\label{eq:E}
\end{equation}
where $\mathcal{F}=\zeta_{t}f-\overline{a}_{ij}^{f}(\zeta_{ij}f+f_{j}\zeta_{i})-\overline{a}_{ij}^{f}f_{i}\zeta_{j}-\zeta(\overline{b}_{i}^{f})_{i}f$.
By corollary \ref{Corollary 3.10 } we see that $\mathcal{F}\in L^{\frac{10}{3}}(\omega')$
(recall that $(\overline{b}_{i}^{f})_{i}$ is propotional to $f$)
and so by theorem \ref{Theorem 4.5} $u\in W_{\frac{10}{3}}^{2,1}(\omega')$.
We proceed by iterating lemma \ref{Lemma 3.9 }: utilizing lemma \ref{Lemma 3.9 }
with $q=\frac{10}{3},r=10$ we find that $u\in W_{10}^{2,1}(\omega')$.
Utilizing lemma \ref{Lemma 3.9 } once again with $q=10,10\leq r<\infty$
we get $u\in W_{r}^{2,1}(\omega')$. Thus, we have shown $u\in W_{r}^{2,1}(\omega')$
for all $1\leq r<\infty$. Lemma \ref{Lemma 4.5 } entails that $u\in H^{1+\alpha,\frac{1+\alpha}{2}}(\omega')$
for all $0<\alpha<1$. We have thus shown that $f\in H^{1+\alpha,\frac{1+\alpha}{2}}(\Omega)$
for any $\Omega\Subset\omega$ and $0<\alpha<1$, which in turn implies
that $\mathcal{F}$ verifies the H\"older condition 2 in theorem
\ref{Theorem 4.6 } for all $0<\alpha<1$. In addition lemma \ref{Lemma 4.4}
guarantees that $\overline{a}_{ij}^{f}$ verifies the H\"older condition
2 imposed in Theorem \ref{Theorem 4.6 } for all $0<\alpha<1$. Corollary
\ref{Corollary 4.5} ensures that equation (\ref{eq:E}) has a unique
$H_{\alpha+2}^{\ast}(\overline{\omega'})$ solution (for arbitrary
fixed $0<\alpha<1$) , and a fortiori this solution must identify
with $f$. 
\end{onehalfspace}
\begin{onehalfspace}
\begin{flushright}
$\square$ 
\par\end{flushright}
\end{onehalfspace}

\begin{onehalfspace}
\textbf{Acknowledgement. }

This work is part of the author's PhD thesis. I would like to express
my deepest graditute towards my supervisor François Golse for exposing
me to this research direction, for many fruitful discussions and for
carefully reading previous versions of this manuscript and providing
insightful comments and improvements. I would also like to thank an
anonymous referee for providing comments which improved the quality
of this work. 
\end{onehalfspace}

\end{document}